\numberwithin{equation}{section}
\theoremstyle{plain}
\newtheorem{theorem}{Theorem}[section]
\newtheorem{proposition}[theorem]{Proposition}
\newtheorem{lemma}[theorem]{Lemma}
\newtheorem{corollary}[theorem]{Corollary}
\newtheorem{claim}{Claim}
  \theoremstyle{remark}
  \theoremstyle{definition}
\newcommand\R{\text{I\!R}}
\newcommand\N{\text{I\!N}}
\newcommand\e{\epsilon}
\newcommand\ep{\epsilon}
\newcommand\de{\delta}
\newcommand\be{\beta}
\newcommand\al{\alpha}
\newcommand{\Om}{\Omega}
\newcommand{\oen}{\Omega_{\e_n}}
\newcommand{\fr}{\partial}
\newcommand{\grad}{\nabla}
\newcommand{\ml}{\mathcal}
\newcommand{\sm}{\setminus}
\newcommand{\la}{\lambda}
\newcommand{\dem}{\bf Proof:}
\newcommand\lap{\Delta}
\newcommand\ti{\tilde}
\newcommand{\lf}{\left}
\newcommand{\rg}{\right}
\newcommand\ds{\displaystyle}
\renewcommand\({\left(}
\renewcommand\){\right)}
\DeclareMathAlphabet{\mathpzc}{OT1}{pcz}{m}{it}
\begin{document}   \title[On the mean field equation   with  variable intensities on pierced domains]{On the mean field equation   with  variable intensities on pierced domains}

\author[P. Esposito]{Pierpaolo Esposito}
\address{Pierpaolo Esposito 
\newline \indent Universit\`a degli Studi Roma Tre 
\newline \indent Dipartimento di Matematica e Fisica 
\newline \indent L.go S. Leonardo Murialdo  1 
\newline \indent 00146 Roma, Italy}
\email{esposito@mat.uniroma3.it}

\author[P. Figueroa]{Pablo Figueroa}
\address{Pablo Figueroa 
\newline \indent Universidad Cat\'olica Silva Henr\'iquez
\newline \indent Facultad de Educaci\'on
\newline \indent Escuela de Investigaci\'on y Postgrado
\newline \indent General Jofr\'e 462, Santiago, Chile }
\email{pfigueroas@ucsh.cl }

\author[A. Pistoia]{Angela Pistoia}
\address{Angela Pistoia \newline \indent Universit\`a di Roma ``La Sapienza'' \newline \indent
Dipartimento SBAI,  via Antonio Scarpa 16, \newline 
\indent 00161 Roma, Italy}
\email{angela.pistoia@uniroma1.it}

\date{\today}
\subjclass[2010]{35B44; 35J25; 35J60}

\keywords{pierced domain, blowing-up solutions, mean field equation}

\maketitle


\begin{abstract}
\noindent We consider the two-dimensional mean field equation of the equilibrium turbulence with variable intensities and Dirichlet boundary condition on a pierced domain 
$$\left\{ \begin{array}{ll} 
-\lap u=\la_1\dfrac{V_1 e^{u}}{ \int_{\Om_{\boldsymbol\epsilon}} V_1  e^{u} dx } - \la_2\tau \dfrac{ V_2 e^{-\tau u}}{ \int_{\Om_{\boldsymbol\epsilon}}V_2 e^{ - \tau u} dx}&\text{in $\Om_{\boldsymbol\epsilon}=\Om\sm \displaystyle \bigcup_{i=1}^m \overline{B(\xi_i,\e_i)}$}\\
\ \ u=0 &\text{on $\fr \Om_{\boldsymbol\epsilon}$},
\end{array} \right. $$
where   $B(\xi_i,\e_i)$ is a ball centered at $\xi_i\in\Omega$ with radius $\e_i$, $\tau$ is a positive parameter and $V_1,V_2>0$ are smooth potentials. When $\lambda_1>8\pi m_1$ and $\lambda_2 \tau^2>8\pi (m-m_1)$ with $m_1 \in \{0,1,\dots,m\}$, there exist radii $\epsilon_1,\dots,\epsilon_m$ small enough such that the problem has a solution  which blows-up positively and negatively  at the points $\xi_1,\dots,\xi_{m_1}$ and $\xi_{m_1+1},\dots,\xi_{m}$, respectively, as the radii approach zero.
\end{abstract}

\tableofcontents
\date{\today} 
\section{Introduction}

\noindent In the pioneering paper \cite{o} Onsager introduced an approach to explain the formation of stable large-scale vortices, which in the context of the statistical mechanics description of 2D-turbulence allowed Caglioti, Lions, Marchioro, Pulvirenti \cite{clmp} and Sawada, Suzuki \cite{ss} to derive the following equation:
\begin{equation}\label{p1}
\left\{\begin{array}{ll}
\ds -\Delta u=\lambda \int\limits_{[-1,1]}{\alpha e^{\alpha u}\over \int\limits_\Omega e^{\alpha u}dx}d \mathcal P(\alpha)& \hbox{in}\ \Omega \\
 u=0 & \hbox{on}\ \partial\Omega,
\end{array}\right.
\end{equation}
where $\Omega$ is a bounded domain in $\mathbb R^2,$ $u$ is the stream function of the flow, $\lambda>
0$ is a constant related to the inverse temperature and  $\mathcal P$ is a Borel probability measure in $[-1,1]$ describing the point-vortex intensities distribution.

\medskip \noindent When $\mathcal P=\delta_1$ is concentrated at $1$, then \eqref{p1} reduces to the classical mean field equation
\begin{equation}\label{p2}
\left\{\begin{array}{ll} 
\ds -\Delta u=\lambda  { e^{ u}\over \int\limits_\Omega e^{  u}dx}& \hbox{in}\ \Omega\\
 u=0& \hbox{on}\ \partial\Omega,
 \end{array}\right.
 \end{equation}
which has been widely studied in the last decades (see the survey \cite{l}). In particular, solutions are critical points of the functional
$$J_\la(u)={1\over 2}\int_\Om |\nabla u|^2-\la \log\lf(\int_\Om e^{u}\rg),\quad u\in H_0^1(\Om).$$
By Moser-Trudinger's inequality solutions can be found as minimizers of $J_\la$ if $\la<8\pi$. In the supercritical regime $\la\ge 8\pi$, the situation becomes subtler since the existence of solutions could depend on the topology and the geometry of the domain. Using a degree argument Chen and Lin \cite{CL1,CL2} proved that \eqref{p2} has a solution when $\la\notin 8\pi\N$ and $\Om$ is not simply connected. On Riemann surfaces the degree argument in \cite{CL1,CL2} is still available and has received a variational counterpart in \cite{Dja,Mal} by means of  improved forms of the Moser-Trudinger inequality. When $\la=8\pi$ problem \eqref{p2} is solvable on a long and thin rectangle, as showed by Caglioti et al. \cite{clmp1}, but not on a ball. Bartolucci and Lin \cite{BL} proved that \eqref{p2} has a solution for $\la = 8\pi $ when the Robin function of $\Om$ has more than one maximum point.

\medskip \noindent When $\mathcal P=\sigma \delta_{1}+(1-\sigma)\delta_{-\tau }$ with $\tau \in[-1,1]$ and $\sigma\in[0,1]$, equation \eqref{p1} becomes
$$\left\{\begin{array}{ll} 
\ds -\Delta u=\lambda \left( \sigma{ e^{ u}\over \int\limits_\Omega e^{  u}dx} -(1-\sigma)\tau { e^{ -\tau  u}\over \int\limits_\Omega e^{ -\tau   u}dx}
\right)& \hbox{in}\ \Omega\\
 u=0& \hbox{on}\ \partial\Omega,
 \end{array}\right.$$
which can be rewritten (setting $\lambda_1=\lambda\sigma$, $\lambda_2=\lambda(1-\sigma)$ and $V_1=V_2=1$) as
\begin{equation}\label{p22}
\left\{\begin{array}{ll} 
\ds -\Delta u=\lambda_1  { V_1 e^{ u}\over \int\limits_\Omega V_1 e^{  u}dx} -\lambda_2\tau { V_2 e^{ -\tau  u}\over \int\limits_\Omega V_2 e^{- \tau   u}dx}& \hbox{in}\ \Omega\\
  u=0& \hbox{on}\ \partial\Omega.
  \end{array}\right.
  \end{equation}
If $\tau =1$ and $V_1=V_2\equiv 1$ problem \eqref{p22} reduces to the sinh-Poisson equation or its related mean field version, which has received a considerable interest in recent years, see \cite{bjmr,GP,j,jwy1,jwy2,jwyz,os1,r} and
the references therein. 

\medskip \noindent Up to our knowledge, only few results are  known in a  more general situation. 
In \cite{pr1} Pistoia and Ricciardi built blowing-up solutions   to \eqref{p22} when  $\tau  >0$ and $\lambda_1,\lambda_2\tau^2$ are close to $8\pi$, while in \cite{pr2} the same authors built an arbitrary large number of  sign-changing blowing-up solutions to \eqref{p22} when $\tau  >0$ and $\lambda_1,\lambda_2\tau^2$ are close to suitable (not necessarily integer) multiples of $8\pi.$  In \cite{rt}  Ricciardi and Takahashi provided a complete blow-up picture for solution sequences of \eqref{p22}   and successively  in \cite{rtzz} Ricciardi et al. constructed min-max solutions  when $\lambda_1 \to 8\pi^+$ and   $\lambda_2 \to 0$ on a multiply connected domain (in this case the nonlinearity $e^{-\tau  u}$ 
  may  be treated as a lower-order term with respect to the main term $e^u$). In a compact Riemann surface, a blow-up analysis is performed in \cite{j2,rz} and some existence results are obtained when $\tau>0$.
  
\medskip \noindent A natural question concerns whether do there exist solutions to \eqref{p22} on multiply connected domain $\Omega$ for general values of the parameters $\lambda_1,\lambda_2>0$. For the classical mean field equation \eqref{p2} Ould-Ahmedou and Pistoia \cite{op} proved that on a pierced domain $\Omega_\epsilon:=\Omega\setminus \overline{B(\xi_0,\epsilon)}$, $\xi_0\in\Omega$, there exists a solution to \eqref{p2} which blows-up at $\xi_0$ as $\epsilon \to 0$ for any $\lambda>8\pi$ (extra symmetric conditions are required when $\la \in 8\pi \mathbb{N}$). In the present paper we  consider \eqref{p22} on domains $\Om_\e:=\Om \setminus \cup_{i=1}^m \overline{B(\xi_i,\e_i)}$ with several small holes, where $\xi_1,\dots,\xi_m$ are distinct points in $\Omega$ and $\e=(\e_1,\dots,\e_m)$ is small. The main assumption is that $\lambda_1, \lambda_2$ decompose as
\begin{equation}\label{choice0}
\la_1=4\pi(\al_1+\dots+\al_{m_1}), \quad \la_2 \tau^2 = 4\pi(\al_{m_1+1}+\dots+\al_{m }),\
m_1 \in \{0,1,\dots,m\},\ \al_i> 2,\ \al_i\not\in 2\mathbb N.
\end{equation}
Condition \eqref{choice0} when $m_1=m$ is simply equivalent to have $ \la_1>8\pi m $. In general, for the decomposition \eqref{choice0} to hold for $1\leq m_1<m$ and suitable $\alpha_i$'s a necessary condition is that $\la_1>8\pi m_1$ and $\lambda_2\tau^2>8\pi (m-m_1)$. Our main result reads as follows.
\begin{theorem}\label{main} If  \eqref{choice0}  holds,  there exist radii $\epsilon_1,\dots,\epsilon_m$  small enough such that \eqref{p22} has a solution $u_\e$ in $\Om_\e$ blowing-up positively and negatively at $\xi_1,\dots,\xi_{m_1}$ and $\xi_{m_1+1},\dots,\xi_m$, respectively, as $\epsilon_1,\dots,\epsilon_m \to 0$.
\end{theorem}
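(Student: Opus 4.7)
The plan is a Lyapunov–Schmidt finite-dimensional reduction, with building blocks singular Liouville profiles of non-even integer parameter $\alpha_i$ placed at each $\xi_i$, and free parameters $(\delta_i,\epsilon_i)$ tuned to kill the reduced gradient. For $i=1,\dots,m$ introduce concentration parameters $\delta_i>0$ and
\[
U_i(x)=\log\frac{2\alpha_i^2\delta_i^{\alpha_i}}{(\delta_i^{\alpha_i}+|x-\xi_i|^{\alpha_i})^2},
\]
which satisfy $-\Delta U_i=|x-\xi_i|^{\alpha_i-2}e^{U_i}$ in $\mathbb R^2$ with $\int|x-\xi_i|^{\alpha_i-2}e^{U_i}\,dx=4\pi\alpha_i$. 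Denoting by $PU_i$ the projection of $U_i$ onto $H_0^1(\Omega_{\boldsymbol\epsilon})$, the ansatz is
\[
W_{\boldsymbol\epsilon}=\sum_{i=1}^{m_1}PU_i-\frac1\tau\sum_{i=m_1+1}^{m}PU_i,
\]
chosen so that, at leading order, $u_{\boldsymbol\epsilon}$ blows up positively at $\xi_1,\dots,\xi_{m_1}$ while $-\tau u_{\boldsymbol\epsilon}$ blows up positively at $\xi_{m_1+1},\dots,\xi_m$, each carrying Liouville mass $4\pi\alpha_i$. Summing over $i$ matches $\lambda_1$ and $\lambda_2\tau^2$ exactly, in agreement with \eqref{choice0}.

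Setting $u_{\boldsymbol\epsilon}=W_{\boldsymbol\epsilon}+\phi$ and linearising \eqref{p22}, the linearised operator $L_{\boldsymbol\epsilon}$ localises around each $\xi_i$ and, after rescaling by $\delta_i$, converges to the linearisation of the singular Liouville equation in $\mathbb R^2$. Since $\alpha_i\notin 2\mathbb N$, its bounded kernel is one-dimensional and generated only by the dilation mode $Z_i=\partial_{\delta_i}U_i$; translation and rotation modes are ruled out by the singular weight $|y|^{\alpha_i-2}$. One develops weighted $L^\infty$ a priori estimates showing that $L_{\boldsymbol\epsilon}$ is invertible modulo the span of the projected modes $PZ_i$, and a standard fixed-point argument yields a unique small remainder $\phi_{\boldsymbol\epsilon}=\phi_{\boldsymbol\epsilon}(\delta,\epsilon)$ orthogonal to the $PZ_i$'s, solving \eqref{p22} up to Lagrange multipliers $c_1,\dots,c_m$.

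Annihilating these multipliers is equivalent, by the variational structure, to finding critical points in $(\delta_1,\dots,\delta_m)$ of the reduced energy $\mathcal F_{\boldsymbol\epsilon}(\delta)=J_{\lambda_1,\lambda_2}(W_{\boldsymbol\epsilon}+\phi_{\boldsymbol\epsilon})$. Its expansion as $\delta_i,\epsilon_i\to 0$ takes the schematic form
\[
\mathcal F_{\boldsymbol\epsilon}(\delta)=c_0+\sum_{i=1}^{m}\Bigl[A_i(\alpha_i)\log\delta_i+B_i(\alpha_i)(\log\delta_i)(\log\epsilon_i)+\Psi_i(\xi;V_1,V_2,G,H)\Bigr]+o(1),
\]
with nonzero coefficients $A_i,B_i$ precisely because $\alpha_i>2$ and $\alpha_i\notin 2\mathbb N$, and $\Psi_i$ encoding Green and Robin interactions of $\Omega$ at $\xi_i$. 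The equations $\partial_{\delta_i}\mathcal F_{\boldsymbol\epsilon}=0$ decouple into scalar algebraic relations between $\delta_i$ and $\epsilon_i$; one closes the system by imposing $\delta_i=\epsilon_i^{\beta_i}\mu_i$, with $\beta_i=\beta_i(\alpha_i)>1$ and $\mu_i$ in a compact subinterval of $(0,\infty)$, and invoking a continuity/degree argument to solve for $\mu_i$. Letting the resulting $\epsilon_i\to0$ produces the desired solution.

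The main obstacle is the simultaneous control of the two nonlinearities near a blow-up point where only one of them is asymptotically dominant: at a positive blow-up point $\xi_i$ with $i\leq m_1$ the term $V_2e^{-\tau u}$ is exponentially small pointwise, yet enters both the error estimates and the reduced functional through its normalising integral and cross-interactions mediated by the Green's function of $\Omega_{\boldsymbol\epsilon}$, which itself degenerates as the radii shrink. A further delicate point is that the linear theory must be carried out uniformly on a domain with $m$ small holes, requiring careful control of capacities, projection errors and kernel modes on $\partial\Omega_{\boldsymbol\epsilon}$; this is where the non-even-integer condition $\alpha_i\notin 2\mathbb N$ is decisive, as it guarantees that each blow-up point contributes exactly one scaling direction to the reduction, so that the radii $\epsilon_i$ can be chosen to match the corresponding scalar equation.
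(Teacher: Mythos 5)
Your ansatz and parameter scaling match the paper's, but your overall strategy is genuinely different. You propose a Lyapunov--Schmidt reduction: invertibility of the linearized operator only modulo the $m$ dilation modes $P_\e Z_i$, Lagrange multipliers $c_1,\dots,c_m$, and a reduced energy $\mathcal F_{\boldsymbol\epsilon}$ whose critical points in $(\mu_1,\dots,\mu_m)$ are located by a degree argument. The paper instead fixes $\delta_i^{\alpha_i}=d_i\e$ and $\e_i^{(\alpha_i-2)/2}=r_i\e$ with the \emph{explicit} constants \eqref{choiced}, and proves (Proposition \ref{elle}) that the full linear operator $\mathcal L$ in \eqref{ol} is uniformly invertible on $H_0^1(\Om_\e)$, with $\|\phi\|\le C|\log\e|\,\|h\|_p$; no kernel direction survives. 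This is the point of the pierced domain, inherited from \cite{op}: the Dirichlet condition on the small inner circles $\partial B(\xi_i,\e_i)$, once $\e_i$ is locked to the concentration scale via \eqref{choice1}, already supplies the constraint that a reduction would otherwise impose through orthogonality. The solution is then obtained by a direct contraction (Proposition \ref{p3}), with no reduced functional and no degree theory. Your route can likely be made to work, but it reintroduces precisely the difficulty -- one approximate kernel mode per hole and $m$ multipliers to annihilate -- that the piercing is designed to eliminate, and it adds a full energy expansion and a critical-point argument, both of which you leave at the level of a sketch with unverified cancellations.

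Two concrete points also need attention. First, the exponent is inverted: from $\delta_i^{\alpha_i}\sim\e$ and $\e_i^{(\alpha_i-2)/2}\sim\e$ one gets $\delta_i\sim\e_i^{(\alpha_i-2)/(2\alpha_i)}$ with $(\alpha_i-2)/(2\alpha_i)\in(0,\tfrac12)$, i.e. $\beta_i<1$, not $\beta_i>1$; this sign is what guarantees $\e_i/\delta_i\to0$, which controls the boundary error term $(\e_i/\delta_i)^{\alpha_i}$ in Lemma \ref{ewfxi}. Second, you never address the non-local structure coming from the mean-field normalization: after linearizing the quotients $V_j e^{\pm\tau u}/\int V_j e^{\pm\tau u}$, the operator $\mathcal L$ acquires the integral terms $-\frac{1}{\lambda_j}\int_{\Om_\e}K_j\phi$, and the constants $\tilde c_j(\phi)$ in \eqref{ctjphi} are central to the proof of invertibility (Claims \ref{claim3}--\ref{claim4} show $(\alpha_j-1)a_j+2\tilde c_i=0$ and $\tilde c_i=0$, hence $a_j=0$). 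These non-local terms, together with the Green-function interactions between distinct $\xi_i$, would couple the equations $\partial_{\delta_i}\mathcal F_{\boldsymbol\epsilon}=0$; your claim that they ``decouple into scalar algebraic relations'' is not justified and is, as far as one can see, not expected.
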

\noindent

\medskip \noindent Let us briefly describe how we build the solution $u_\epsilon$ using   a perturbative approach. We look for a solution of \eqref{p22} as
\begin{equation}\label{ansatz}
u_\e=P_\e U +\phi_\e,
\end{equation}
where $U$ is a suitable ansatz, $P_\e$ is the projection operator onto $H^1_0(\Omega_\e)$(see \eqref{ePu}) and $\phi_\e\in H_0^1(\Om_\e)$ is a small remainder term. The ansatz $U$ is built as follows. Letting
$$w=\log\frac{2\alpha^2\delta^{\alpha}}{(\delta^{\alpha}+|x -\xi|^{\alpha})^2}$$
be a solution of the singular Liouville equation
\begin{equation*}
\left\{ \begin{array}{ll}\Delta w+|x -\xi|^{\alpha-2}e^{w}=0 &\text{in $\R^2$}\\
\ds\int_{\R^2} |x -\xi |^{\alpha-2} e^w dx<+\infty,& 
\end{array} \right.
\end{equation*}
denote by $U_i$ be the function $w$ corresponding to $\alpha_i,\xi_i$ and $\delta_i>0$, $i=1,\dots,m$. Then $U$ is defined as
$$U= \sum_{k=1}^{m_1} U_k-\frac1\tau\sum_{k=m_1+1}^{m} U_k.$$
In section \ref{sec2} a careful choice of the parameters $\delta_j $'s and the radii $\e_j$'s (see \eqref{choice1}) is needed in order to make $P_\e U$ be a good approximated solution: indeed we will show that the error term $\ml{R}$ given by
\begin{equation}\label{R} 
\ml{R}= \Delta P_\e U+\la_1\dfrac{V_1 e^{P_\e U}}{\int_{\Om_\e}V_1 e^{P_\e U} dx }-\la_2\tau\dfrac{V_2e^{-\tau P_\e U}}{\int_{\Om_\e}V_2 e^{-\tau P_\e U}dx}
\end{equation}
is  small in $L^p$-norm for $p>1$ close to $1$ (see Lemma \ref{erre}). A linearization procedure around $P_\e U$ leads us to re-formulate \eqref{p22} in terms of a nonlinear problem for $\phi_\e$ (see equation \eqref{ephi}). 
Thanks to some estimates in section \ref{sec4} (see \eqref{estlaphi} and \eqref{estnphi}) we will prove the existence of such a solution $\phi_\e$ to \eqref{ephi} by using a fixed point argument. The corresponding solution $u_\e$ in \eqref{ansatz} blows-up at the point $\xi_i$'s thanks to the asymptotic properties of its main order term $P_\e U$ (see Corollary \ref{coro927}). In Section \ref{sec3} we will prove the invertibility of the linear operator naturally associated to the problem (see \eqref{ol}) stated in  Proposition \ref{elle}. Finally, we point out that this approach turns out also useful to address a sinh-Poisson type equation, which is related, but not equivalent to problem \eqref{p22} and it is carried out in \cite{f}.


\section{The ansatz}\label{sec2}
\noindent Let $G(x,y)=-\frac{1}{2\pi}  \log |x-y|+H(x,y)$ be the Green function of $-\Delta$ in $\Omega$, where the regular part $H$ is a harmonic function in $\Omega$ so that $H(x,y)=\frac{1}{2\pi} \log |x-y|$ on $\fr\Om$. Let us introduce the coefficients $\be_{ij},$ $i,j=1,\dots,m,$ as the solution of the linear system
\begin{equation}\label{eqb2}
 {\be_{ij}}\left({1\over 2\pi}\log \e_j -H(\xi_j,\xi_j)\right)-\sum_{k\ne j}\be_{ik} G(\xi_j,\xi_k)=- 4\pi \al_i  H(\xi_i,\xi_j)+\left\{  \begin{array}{ll} 2 \al_i \log \delta_i &\hbox{if }i=j\\
2 \al_i  \log |\xi_i-\xi_j|& \hbox{if } i\not=j. \end{array} \right.
\end{equation}
Notice that \eqref{eqb2} can be re-written as the diagonally-dominant system
$${\be_{ij}} \log \e_j -2\pi \bigg[ \be_{ij} H(\xi_j,\xi_j)+ \sum_{k\ne j}\be_{ik} G(\xi_j,\xi_k) \bigg]=-8 \pi^2 \al_i H(\xi_i,\xi_j)+\left\{  \begin{array}{ll} 4\pi \al_i \log \delta_i &\hbox{if }i=j\\
4\pi \al_i  \log |\xi_i-\xi_j| & \hbox{if } i\not=j \end{array} \right.$$
for $\e_j$ small, which has a unique solution satisfying 
\begin{equation}\label{eqbij}
\beta_{ij}=\frac{4\pi \al_i \log \delta_i}{\log \e_j} \delta_{ij}+O(|\log \e_j|^{-1})
\end{equation}
where $\delta_{ij}$ is the Kronecker symbol. Introducing the projection $P_\e w$ as the unique solution of
\begin{equation}\label{ePu}
\left\{ \begin{array}{ll} 
\Delta P_\e w =\lap w &\text{in }\Om_\e\\
P_\e w=0,&\text{on }\fr\Om_\e,
\end{array}\right.
\end{equation} 
we have the following asymptotic expansion of $P_{\e}U_i$:
\begin{lemma}\label{ewfxi}
There hold
\begin{eqnarray}\label{pui}
P_\e U_i = U_i-\log\lf[2 \al_i^2\de_i^{\al_i}\rg]+
4 \pi \al_i H(x,\xi_i)-\sum_{k=1}^m\be_{ik} G(x,\xi_k)+O \left( \de_i^{\al_i}+\Big(1+\frac{\log \delta_i }{\log \e_i}\Big)  \sum_{k=1}^m\e_k +\Big(\dfrac{ \e_i}{\de_i }\Big)^{\al_i }\right)
\end{eqnarray}
uniformly in $\Om_\e$ and 
\begin{eqnarray} \label{puii}
P_\e U_i=4 \pi \al_i G(x,\xi_i)-\sum_{k=1}^m\be_{ik} G(x,\xi_k)+O \left(\de_i^{\al_i}+\Big(1+\frac{\log \delta_i }{\log \e_i}\Big)  \sum_{k=1}^m\e_k +\Big(\dfrac{ \e_i}{\de_i }\Big)^{\al_i } \right)
\end{eqnarray}
locally uniformly in $\overline{\Om} \sm\{\xi_1,\dots,\xi_m\}$.
\end{lemma}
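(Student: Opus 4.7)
The plan is to set $\psi_i := U_i - P_\e U_i$, which by \eqref{ePu} is harmonic in $\Om_\e$ with boundary values $\psi_i = U_i$ on $\partial\Om_\e$. The strategy is to guess an explicit harmonic function $Z_i$ that matches these boundary data up to a small error, and then apply the maximum principle to conclude $\psi_i - Z_i = O(\cdot)$ uniformly in $\Om_\e$. Inspecting \eqref{pui} suggests the choice
$$Z_i(x) = \log[2\al_i^2 \delta_i^{\al_i}] - 4\pi\al_i H(x,\xi_i) + \sum_{k=1}^m \beta_{ik} G(x,\xi_k),$$
which is harmonic in $\Om\sm\{\xi_1,\dots,\xi_m\}$, hence in $\Om_\e$.

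The core step is then a careful boundary comparison of $\psi_i$ and $Z_i$ on each of the $m+1$ components of $\partial\Om_\e$. Writing $U_i(x)=\log(2\al_i^2\de_i^{\al_i})-2\log(\de_i^{\al_i}+|x-\xi_i|^{\al_i})$, I would expand:
\begin{itemize}
\item On $\fr\Om$, where $|x-\xi_i|$ is bounded below, $U_i = \log(2\al_i^2\de_i^{\al_i}) - 2\al_i\log|x-\xi_i| + O(\de_i^{\al_i})$; since $G(\cdot,\xi_k)=0$ and $H(x,\xi_i)=\frac{1}{2\pi}\log|x-\xi_i|$ on $\fr\Om$, one reads off $\psi_i - Z_i = O(\de_i^{\al_i})$.
\item On $\fr B(\xi_j,\e_j)$ with $j\neq i$, $U_i = \log(2\al_i^2\de_i^{\al_i}) - 2\al_i\log|\xi_i-\xi_j| + O(\e_j+\de_i^{\al_i})$, while Taylor-expanding $H(\cdot,\xi_i)$ and $G(\cdot,\xi_k)$ around $\xi_j$ (separating the logarithmic singularity of $G(\cdot,\xi_j)$) gives
$$Z_i = \log[2\al_i^2\de_i^{\al_i}] - 4\pi\al_i H(\xi_j,\xi_i) + \beta_{ij}\bigl[-\tfrac{1}{2\pi}\log\e_j + H(\xi_j,\xi_j)\bigr] + \sum_{k\neq j}\beta_{ik} G(\xi_j,\xi_k) + O(\e_j).$$
The off-diagonal equation \eqref{eqb2} for $(i,j)$ together with the symmetry $H(\xi_i,\xi_j)=H(\xi_j,\xi_i)$ kills the leading term, leaving $\psi_i - Z_i = O(\e_j+\de_i^{\al_i})$.
\item On $\fr B(\xi_i,\e_i)$, since $\e_i\ll\de_i$ one expands $\log(\de_i^{\al_i}+\e_i^{\al_i}) = \al_i\log\de_i + O((\e_i/\de_i)^{\al_i})$, giving $U_i=\log(2\al_i^2)-\al_i\log\de_i+O((\e_i/\de_i)^{\al_i})$. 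The analogous expansion of $Z_i$ now invokes the diagonal ($i=j$) equation in \eqref{eqb2}, producing the cancellation that yields $\psi_i - Z_i = O(\e_i + (\e_i/\de_i)^{\al_i})$.
\end{itemize}
The factor $(1 + \log\de_i/\log\e_i)$ in the error arises because the $O(\e_k)$ errors in the Taylor expansion of $G(\cdot,\xi_k)$ are amplified by $\beta_{ii} = O(\log\de_i/\log\e_i)$ from \eqref{eqbij}, whereas $\beta_{ik}=O(1/|\log\e_k|)$ for $k\neq i$ only gives $o(\e_k)$.

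Collecting these three bounds and applying the maximum principle to the harmonic function $\psi_i - Z_i$ yields \eqref{pui} uniformly in $\Om_\e$. For \eqref{puii}, for $x$ in a fixed compact set of $\bra\Om\sm\{\xi_1,\dots,\xi_m\}$ one has $|x-\xi_i|$ bounded away from zero, so $U_i(x) = \log(2\al_i^2\de_i^{\al_i}) - 2\al_i\log|x-\xi_i| + O(\de_i^{\al_i})$; substituting into \eqref{pui} recombines $-2\al_i\log|x-\xi_i|+4\pi\al_i H(x,\xi_i) = 4\pi\al_i G(x,\xi_i)$, producing \eqref{puii}. The main obstacle is not any single computation but the bookkeeping: one has to track three different scales ($\de_i^{\al_i}$, $\e_k$, $(\e_i/\de_i)^{\al_i}$), know which boundary component the error comes from, and use the precise definition \eqref{eqb2} of $\beta_{ij}$ to cancel the large logarithmic terms on $\fr B(\xi_j,\e_j)$ in a uniform way.
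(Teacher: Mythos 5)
Your proposal is correct and matches the paper's proof essentially verbatim: both define the same harmonic comparison function (your $\psi_i - Z_i$ is the paper's $\psi$ up to sign), estimate its boundary values on $\partial\Omega$ and each $\partial B(\xi_j,\epsilon_j)$ using the linear system \eqref{eqb2} to cancel the leading terms, track the $(1+\log\delta_i/\log\epsilon_i)$ amplification coming from $\beta_{ii}$ via \eqref{eqbij}, and then apply the maximum principle, with \eqref{puii} following from \eqref{pui} on compact sets away from the $\xi_j$'s.
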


\begin{proof}[\dem]The harmonic function
$$\psi=P_\e U_i-U_i+\log\lf[2\al_i^2\de_i^{\al_i} \rg]-4\pi \al_i H(x,\xi_i)+\sum_{k=1}^m\be_{ik} G(x,\xi_k)$$
satisfies $\psi=2\log(\de_i^{\al_i}+|x-\xi_i|^{\al_i })-4 \pi \al_i H(x,\xi_i)=O(\de_i^{\al_i})$ on $\fr\Om$ and
\begin{eqnarray*}
&&\hspace{-0.5cm} \psi= 2\log(\de_i^{\al_i}+\e_i^{ \al_i })-4\pi \al_i  H(x,\xi_i)+\displaystyle \sum_{k=1}^m \be_{ik} G(x,\xi_k)= O\lf(\Big(\frac{\e_i}{\delta_i}\Big)^{\alpha_i}+\Big(1+\frac{\log \delta_i }{\log \e_i}\Big)\e_i \rg) \hspace{0.5 cm}  \hbox{on }\fr B(\xi_i,\e_i)\\
&&\hspace{-0.5cm} \psi= 2\log(\de_i^{\al_i}+|x-\xi_i|^{ \al_i })-4\pi \al_i  H(x,\xi_i)+\displaystyle \sum_{k=1}^m \be_{ik} G(x,\xi_k)= O\lf(\de_i^{\al_i}+\Big(1+\frac{\log \delta_i }{\log \e_i}\Big)\e_j \rg) \hbox{ on } \fr B(\xi_j,\e_j)
\end{eqnarray*}
for all $j \not= i$ in view of \eqref{eqb2}-\eqref{eqbij}. By the maximum principle we conclude the validity of \eqref{pui}, and then \eqref{puii} easily follows.
\end{proof}
\noindent Notice that by \eqref{pui}-\eqref{puii} $P_\e U$ displays in the expansion near $\xi_i$, $i=1,\dots,m_1$ a term
$$U_i-\lf(\sum_{j=1}^{m_1} \beta_{ji}-\frac{1}{\tau} \sum_{j=m_1+1}^m \beta_{ji}\rg) G(x,\xi_i).$$
Since $-\Delta P_\e U_i=|x-\xi_i|^{\alpha_i-2} e^{U_i}$ needs to match with $e^{P_\e U}$ if $i=1,\dots,m_1$ and $e^{-\tau P_\e U}$ if $i=m_1+1,\dots,m$, we need to impose 
\begin{equation}\label{choice0.1}\left\{\begin{array}{ll}
\sum\limits_{j=1 }^{m_1} \beta_{ji} - \frac1\tau\sum\limits_{j=m_1+1 }^{m } \beta_{ji}=2\pi(\al_i-2) & i=1,\dots,m_1\\
 - \tau\sum\limits_{j=1 }^{m_1} \beta_{ji}+ \sum\limits_{j=m_1+1 }^{m } \beta_{ji}=2\pi(\al_i-2) & i=m_1+1,\dots, m.
\end{array} \right.
\end{equation}
Thanks to \eqref{eqbij}, \eqref{choice0.1} requires at main order that $\alpha_i \log \delta_i=\frac{\alpha_i-2}{2} \log \e_i$, i.e. $\delta_i^{\alpha_i}\sim \e_i^{\frac{\alpha_i-2}{2}}$. Moreover, due to the presence of 
$\log\lf[2 \al_i^2\de_i^{\al_i}\rg]$ in \eqref{pui}-\eqref{puii} we need further to assume that the $\de_i^{\al_i}$'s have the same rate, as it is well known in problems  of mean-field form, see for instance \cite{CL1,CL2,DEFM,EsFi}.

\medskip \noindent Summarizing, for any $i=1,\dots,m$ we choose
\begin{equation}\label{choice1}
\de_i^{\al_i}=d_i\e, \quad  \e_i^{\al_i-2\over 2}=r_i\e,
\end{equation}
for a small parameter $\e>0$, where $d_i,r_i$ will be specified below, and introduce 
$$\rho_i= \left\{\begin{array}{ll}
 (\al_i+2)H(\xi_i,\xi_i)+\sum\limits_{j=1\atop j\not=i }^{m_1}   (\al_j+2)G(\xi_i,\xi_j) - \frac{1}{\tau} \sum\limits_{j=m_1+1}^{m }   (\al_j+2)G(\xi_i,\xi_j) & i=1,\dots,m_1\\
(\al_i+2)H(\xi_i,\xi_i) -  \tau \sum\limits_{j=1 }^{m_1}   (\al_j+2)G(\xi_i,\xi_j)+  \sum\limits_{j=m_1+1\atop j\not=i}^{m }   (\al_j+2)G(\xi_i,\xi_j) & i=m_1+1,\dots, m. \end{array} \right.$$
Setting $A_i=\overline{B(\xi_i,\eta)} \setminus B(\xi_i,\e_i)$ for $\eta<\frac{1}{2} \min\{|\xi_i-\xi_j|:i\not= j\}$, by Lemma \ref{ewfxi} we deduce the following expansion.
\begin{corollary} \label{coro927}
Assume the validity of \eqref{choice0.1}. There hold
\begin{eqnarray} \label{1517}
P_\e U = U_i-\log\lf[2 \al_i^2\de_i^{\al_i}\rg]+
(\alpha_i-2) \log |x-\xi_i|+2 \pi \rho_i+O \left( \e+ \sum_{k=1}^m\e^{\frac{2}{\alpha_k-2}} +|x-\xi_i|\right)
\end{eqnarray}
uniformly in $A_i$, $i=1,\dots,m_1$, 
\begin{eqnarray} \label{1518}
-\tau P_\e U_= U_i- \log\lf[2 \al_i^2\de_i^{\al_i}\rg]+(\alpha_i-2) \log |x-\xi_i|+
2 \pi \rho_i+ O \left( \e+ \sum_{k=1}^m\e^{\frac{2}{\alpha_k-2}} +|x-\xi_i|\right)
\end{eqnarray}
uniformly in $A_i$, $i=m_1+1,\dots,m$, and 
\begin{eqnarray} \label{1519}
&& P_\e U=2 \pi \sum_{i=1}^{m_1} (\al_i+2) G(x,\xi_i)-\frac{2 \pi}{\tau} \sum_{i=m_1+1}^m (\al_i+2) G(x,\xi_i)+ O \left( \e+ \sum_{k=1}^m\e^{\frac{2}{\alpha_k-2}}  \right)
\end{eqnarray}
locally uniformly in $\overline{\Om} \sm\{\xi_1,\dots,\xi_m\}$.
\end{corollary}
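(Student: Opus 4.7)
The plan is to write $P_\e U = \sum_{k=1}^{m_1} P_\e U_k - \frac{1}{\tau}\sum_{k=m_1+1}^{m} P_\e U_k$ and apply Lemma \ref{ewfxi} term-by-term, then perform the algebraic cancellations forced by the normalization \eqref{choice0.1}, and finally express the resulting error in terms of $\e$ via the scaling \eqref{choice1}.

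For \eqref{1517}, fix $i\le m_1$ and restrict to $A_i$. I apply \eqref{pui} to the singular contribution $P_\e U_i$ and \eqref{puii} to each $P_\e U_k$ with $k\neq i$; the latter is legitimate because $\eta$ has been chosen so that $A_i$ stays bounded away from $\{\xi_k\}_{k\neq i}$. I then split every $G(x,\xi_i)$ occurring in the expression as $-\tfrac{1}{2\pi}\log|x-\xi_i|+H(x,\xi_i)$. The coefficient in front of $\log|x-\xi_i|$ reduces to
\[
\frac{1}{2\pi}\Bigl(\sum_{k=1}^{m_1}\beta_{ki}-\frac{1}{\tau}\sum_{k=m_1+1}^{m}\beta_{ki}\Bigr)=\alpha_i-2
\]
by the first line of \eqref{choice0.1}, while the explicit $U_i$ and $-\log[2\alpha_i^2\delta_i^{\alpha_i}]$ are carried over unchanged from \eqref{pui}. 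For the constant $2\pi\rho_i$, I Taylor expand $H(x,\xi_i)=H(\xi_i,\xi_i)+O(|x-\xi_i|)$ and $G(x,\xi_l)=G(\xi_i,\xi_l)+O(|x-\xi_i|)$ for $l\neq i$; the coefficient of $H(\xi_i,\xi_i)$ becomes $4\pi\alpha_i-2\pi(\alpha_i-2)=2\pi(\alpha_i+2)$ and, reusing \eqref{choice0.1} now at index $l$, the coefficient of $G(\xi_i,\xi_l)$ becomes $2\pi(\alpha_l+2)$ if $l\le m_1$ or $-\tfrac{2\pi(\alpha_l+2)}{\tau}$ if $l>m_1$, which is exactly the definition of $\rho_i$.

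Expansion \eqref{1518} follows by the same bookkeeping applied to $-\tau P_\e U$ near $\xi_i$ for $i>m_1$, the sign flip being absorbed by the second line of \eqref{choice0.1}. Expansion \eqref{1519} is a direct application of \eqref{puii} to every $P_\e U_k$ followed by the analogous simplification of the $\beta$-coefficients. For the error bound I substitute \eqref{choice1}: $\delta_i^{\alpha_i}=d_i\e=O(\e)$, $\e_k=O(\e^{2/(\alpha_k-2)})$, and $(\e_i/\delta_i)^{\alpha_i}=O(\e^{(\alpha_i+2)/(\alpha_i-2)})$, which is absorbed by $\e+\sum_k\e^{2/(\alpha_k-2)}$ since $(\alpha_i+2)/(\alpha_i-2)>1$; moreover $\log\delta_i/\log\e_i$ tends to the finite limit $(\alpha_i-2)/(2\alpha_i)$ as $\e\to 0$, so the prefactor $1+\log\delta_i/\log\e_i$ is bounded and $\bigl(1+\tfrac{\log\delta_i}{\log\e_i}\bigr)\e_k=O(\e^{2/(\alpha_k-2)})$. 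The only delicate step is the index-bookkeeping that collapses the $m^2$ parameters $\beta_{ij}$ into the compact constants $\rho_i$; once this cancellation is secured, everything else reduces to direct substitution.
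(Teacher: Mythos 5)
Your proposal is correct and is essentially the computation the paper leaves implicit when it says the expansion follows "by Lemma \ref{ewfxi}": decompose $P_\e U$ via linearity of $P_\e$, apply \eqref{pui} to the singular term and \eqref{puii} to the others, extract the $\log|x-\xi_i|$ coefficient from $G(x,\xi_i)=-\tfrac{1}{2\pi}\log|x-\xi_i|+H(x,\xi_i)$, use \eqref{choice0.1} to collapse the $\beta_{ji}$ sums, Taylor-expand $H$ and $G$ at $\xi_i$ to recover $2\pi\rho_i$, and finally substitute \eqref{choice1} (together with $(\alpha_i+2)/(\alpha_i-2)>1$ and the boundedness of $1+\log\delta_i/\log\e_i$) to reduce the error terms to $O(\e+\sum_k \e^{2/(\alpha_k-2)}+|x-\xi_i|)$. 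The index bookkeeping you carry out, including using the second line of \eqref{choice0.1} divided by $\tau$ for $l>m_1$, is exactly what produces the coefficients $2\pi(\alpha_l+2)$ and $-\tfrac{2\pi}{\tau}(\alpha_l+2)$ appearing in $\rho_i$, so no gap remains.
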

\noindent In order to achieve the validity of \eqref{choice0.1}, we will make a suitable choice of $r_i$ and $d_i$, as expressed by the following Lemma.
\begin{lemma}\label{lem909} If $r_i=d_i e^{-\pi \rho_i}$ for all $i=1,\dots,m$, then \eqref{choice0.1} does hold.
\end{lemma}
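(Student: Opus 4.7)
The plan is to rewrite \eqref{choice0.1} uniformly in $i$ and then reduce it to a linear algebra exercise that mirrors the structure of the original system \eqref{eqb2}. Introduce weights $w_i := 1$ for $i \le m_1$ and $w_i := -1/\tau$ for $i > m_1$; a direct check (using $(-\tau)\cdot(-1/\tau)=1$) shows that both cases of \eqref{choice0.1} collapse to the single statement
\begin{equation*}
\sum_{j=1}^m w_j\, \beta_{ji} = 2\pi(\alpha_i - 2)\, w_i \quad \text{for all } i = 1,\dots,m.
\end{equation*}

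Next, take the $j$-th equation of \eqref{eqb2} (for each fixed $i$, viewed as a row of a linear system in the unknowns $\beta_{i1},\dots,\beta_{im}$), multiply by $w_i$ and sum over $i$. Since the coefficient matrix, with diagonal $\frac{1}{2\pi}\log \epsilon_j - H(\xi_j,\xi_j)$ and off-diagonal $-G(\xi_j,\xi_k)$, does not depend on $i$, the summation yields the same linear system acting on the dual quantities $B_j := \sum_i w_i \beta_{ij}$. By the diagonal-dominance argument that already guarantees unique solvability of \eqref{eqb2} for small $\epsilon$, this summed system also has a unique solution, so proving the identity above reduces to verifying that the ansatz $B_j = 2\pi(\alpha_j-2)\, w_j$ solves the summed system.

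To carry out the verification, I would plug the ansatz into the LHS and convert each $\log|\xi_i-\xi_j|$ on the RHS into $H$ and $G$ terms via $\log|\xi_i-\xi_j| = -2\pi G(\xi_i,\xi_j) + 2\pi H(\xi_i,\xi_j)$. The off-diagonal $H$ contributions cancel between the two sides; the remaining coefficients of $H(\xi_j,\xi_j)$ and $G(\xi_j,\xi_k)$ combine to $-2\pi(\alpha_j+2)$ and $-2\pi(\alpha_k+2)$ respectively, and dividing through by $2w_j$ yields
\begin{equation*}
\tfrac{\alpha_j-2}{2}\log \epsilon_j - \alpha_j \log \delta_j = -\pi(\alpha_j+2)H(\xi_j,\xi_j) - \pi\sum_{k\neq j} (\alpha_k+2)\,\tfrac{w_k}{w_j}\, G(\xi_j,\xi_k).
\end{equation*}
The parametrization \eqref{choice1} turns the LHS into $\log(r_j/d_j)$, while the ratios $w_k/w_j\in\{1,-\tau,-1/\tau\}$ reproduce precisely the coefficients appearing in the piecewise definition of $\rho_j$, whether $j\le m_1$ or $j>m_1$; hence the RHS equals $-\pi\rho_j$ in either regime. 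The ansatz is therefore satisfied iff $\log(r_j/d_j) = -\pi\rho_j$, i.e.\ $r_j = d_j e^{-\pi\rho_j}$, which is the hypothesis of the lemma.

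The only real obstacle is the bookkeeping: one must check carefully that the ratios $w_k/w_j$ track the sign/scalar pattern built into the two-case definition of $\rho_j$, and that the $\log|\xi_i-\xi_j|$/$H$/$G$ substitution is consistent on both sides. Beyond this, the argument is purely algebraic and uses no analytic input beyond the already-established invertibility of the system \eqref{eqb2}.
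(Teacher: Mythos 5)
Your proof is correct and follows the same strategy as the paper's: form a weighted linear combination over $i$ of the equations \eqref{eqb2}, observe that the resulting system for the aggregate quantities is diagonally dominant (hence uniquely solvable) for small $\epsilon$, and check that the candidate values solve it, which after the $\log|\xi_i-\xi_j|\to G,H$ substitution reduces precisely to $\log(r_j/d_j)+\pi\rho_j=0$. The only departure from the paper is notational: you use the uniform weight vector $w_i$ throughout, so the summed system has literally the same coefficient matrix as \eqref{eqb2}, whereas the paper rescales the weights by $-\tau$ when $j>m_1$ to work directly with $\beta_j=B_j/w_j$; both are equivalent and neither changes the argument's substance.
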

\begin{proof}[\dem] Set
$$\beta_i= \left\{\begin{array}{ll}
\sum\limits_{j=1 }^{m_1} \beta_{ji} - \frac1\tau\sum\limits_{j=m_1+1 }^{m } \beta_{ji} & i=1,\dots,m_1\\
 - \tau\sum\limits_{j=1 }^{m_1} \beta_{ji}+ \sum\limits_{j=m_1+1 }^{m } \beta_{ji} & i=m_1+1,\dots, m.
\end{array} \right.$$
When $j=1,\dots,m_1$ let us add \eqref{eqb2} for $i=1,\dots,m_1$ and $-\frac1\tau$ $\times$ \eqref{eqb2} for $i=m_1+1,\dots,m$ to get
$$-2\al_j \log \de_j+\frac{\beta_j}{2\pi} \log \e_j+(4\pi\al_j-\beta_j) H(\xi_j,\xi_j)+ \sum\limits_{i=1 \atop i \not=j }^{m_1}(4\pi\al_i-\beta_i)G(\xi_i,\xi_j) - \frac1\tau \sum\limits_{i=m_1+1}^{m}(4\pi\al_i -\beta_i )G(\xi_i,\xi_j)=0.$$
Similarly, when $j=m_1+1,\dots,m$ we add $-\tau$ $\times$ \eqref{eqb2} for $i=1,\dots,m_1$ and \eqref{eqb2} for $i=m_1+1,\dots,m$ to get  
$$-2\al_j \log \de_j+\frac{\beta_j}{2\pi} \log \e_j+(4\pi\al_j-\beta_j)H(\xi_j,\xi_j) - \tau \sum\limits_{i=1}^{m_1} (4\pi\al_i-\beta_i)G(\xi_i,\xi_j)+ \sum\limits_{i=m_1+1 \atop i\not=j }^{m}(4\pi\al_i-\beta_i) G(\xi_i,\xi_j)=0.$$
Since
$$-2\alpha_j \log \delta_j+\frac{\beta_j}{2\pi} \log \e_j=\frac{\beta_j-2\pi(\alpha_j-2)}{\pi(\alpha_j-2)} \log \e -2\log d_j+\frac{\beta_j}{\pi(\alpha_j-2)}\log r_j$$
in view of \eqref{choice1}, the previous conditions form a system of $m$ equations in $\beta_1,\dots,\beta_m$ which has diagonally-dominant form $\beta_j-2\pi(\alpha_j-2)+O(\frac{1}{|\log \e|})=0$ for $\e$ small. The solution $\beta_1,\dots,\beta_m$ is then uniquely determined and we want to check that $\beta_j=2\pi(\alpha_j-2)$. Inserting $\beta_j=2\pi(\alpha_j-2)$ into the system, it reduces to
$$\log \frac{r_j}{d_j}+\pi \rho_j=0\qquad j=1,\dots,m,$$
which is always true by the choice of $r_i$ and $d_i$. \end{proof}
\noindent Finally, we need to impose that $V_1 e^{PU_\e}$ and $V_2 e^{-\tau PU_\e}$ give integral contributions on the balls $B(\xi_i, \delta)$ for $i=1,\dots,m_1$ and for $i=m_1+1,\dots,m$, respectively, which are proportional to the $\alpha_j$'s. As we will see below, this is achieved by requiring that 
\begin{equation}\label{choice3}\left\{\begin{array}{ll}
\al_i {  V_1(\xi_j) e^{2\pi \rho_j }\over d_j \al_j} =\al_j {V_1(\xi_i) e^{2\pi \rho_i }\over d_i \al_i} &i,j=1,\dots,m_1\\
\al_i {  V_2(\xi_j) e^{2\pi  \rho_j }\over d_j \al_j} =\al_j {V_2(\xi_i) e^{2\pi  \rho_i }\over d_i \al_i}&i,j=m_1+1,\dots,m.
\end{array}\right.
\end{equation}
The choice 
\begin{equation}\label{choiced}
d_i=\left\{ \begin{array}{ll}
\frac{V_1(\xi_i) e^{2\pi  \rho_i}}{\alpha_i^2} &i=1,\dots,m_1\\
\frac{V_2(\xi_i) e^{2\pi  \rho_i}}{\alpha_i^2} &i=m_1+1,\dots,m
\end{array} , \right. \quad r_i=\left\{ \begin{array}{ll}
\frac{V_1(\xi_i) e^{\pi \rho_i}}{\alpha_i^2} &i=1,\dots,m_1\\
\frac{V_2(\xi_i) e^{\pi \rho_i}}{\alpha_i^2} &i=m_1+1,\dots,m
\end{array} \right.
\end{equation}
guarantees the validity of \eqref{choice0.1} and \eqref{choice3} in view of Lemma \ref{lem909}. We are now ready to estimate the precision of our ansatz $U$.
\begin{lemma}\label{erre}
There exists $\e_0>0,$ $p_0>1$ and  $C>0$ such that for any $\e\in(0,\e_0)$ and $p\in(1,p_0)$
\begin{equation}\label{re1}
\|\ml{R}\|_p\le  C \e^{\sigma_p}
\end{equation}
for some $\sigma_p>0$.
\end{lemma}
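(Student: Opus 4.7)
The plan is to split $\Om_\e = \bigcup_{i=1}^m A_i \cup B$ with $B = \Om \setminus \bigcup_i \overline{B(\xi_i,\eta)}$, and estimate $\ml{R}$ in each piece using Corollary \ref{coro927}. The first step computes the normalizing integrals. On $A_i$ with $i \leq m_1$, substituting \eqref{1517} into $V_1 e^{P_\e U}$ and invoking $\delta_i^{\al_i} = d_i \e$ together with $d_i = V_1(\xi_i) e^{2\pi\rho_i}/\al_i^2$ yields the pointwise factorization
\begin{equation*}
V_1 e^{P_\e U} = \frac{1}{2\e}\,|x-\xi_i|^{\al_i-2} e^{U_i}\bigl[1 + O(|x-\xi_i|) + O(\e^\sigma)\bigr]
\end{equation*}
for some $\sigma>0$, with an analogous factorization for $V_2 e^{-\tau P_\e U}$ on $A_k$ with $k > m_1$. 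Since $\int_{\R^2}|x-\xi_i|^{\al_i-2} e^{U_i}\,dx = 4\pi\al_i$ and $e^{\pm\tau P_\e U}$ stays bounded on the other annuli and on $B$ by \eqref{1518}--\eqref{1519}, a direct integration gives
\begin{equation*}
\int_{\Om_\e}V_1 e^{P_\e U}\,dx = \frac{\la_1}{2\e}(1+O(\e^\sigma)),\qquad \int_{\Om_\e}V_2 e^{-\tau P_\e U}\,dx = \frac{\la_2\tau^2}{2\e}(1+O(\e^\sigma)),
\end{equation*}
using $\la_1 = 4\pi\sum_{i=1}^{m_1}\al_i$ and $\la_2\tau^2 = 4\pi\sum_{i=m_1+1}^m\al_i$.

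Next I would plug these back into $\ml{R}$ and use
\begin{equation*}
\Delta P_\e U = -\sum_{k=1}^{m_1}|x-\xi_k|^{\al_k-2} e^{U_k} + \frac{1}{\tau}\sum_{k=m_1+1}^m|x-\xi_k|^{\al_k-2} e^{U_k}.
\end{equation*}
On $A_i$ with $i \leq m_1$, the dominant term $-|x-\xi_i|^{\al_i-2} e^{U_i}$ cancels to leading order against $\la_1 V_1 e^{P_\e U}/\int V_1 e^{P_\e U}$, leaving
\begin{equation*}
\ml{R}\big|_{A_i} = O(|x-\xi_i|+\e^\sigma)\,|x-\xi_i|^{\al_i-2} e^{U_i} + O(\e),
\end{equation*}
the $O(\e)$ collecting both the off-center $|x-\xi_k|^{\al_k-2}e^{U_k}$ terms (of size $\delta_k^{\al_k}/\eta^{\al_k+2} = O(\e)$ on $A_i$) and the second nonlinear term (bounded numerator over a denominator of order $1/\e$). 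A symmetric bound holds on $A_k$ for $k > m_1$. In the outer region $B$, all three terms in $\ml{R}$ are $O(\e)$ since $|\Delta P_\e U| = O(\e)$ there and the nonlinearities have bounded numerators over denominators of order $1/\e$.

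The $L^p$ estimate then follows by rescaling $y = (x-\xi_i)/\delta_i$: the resulting integrals in $y$ converge for $p>1$ close to $1$ thanks to $\al_i > 2$, giving $\||x-\xi_i|^{\al_i-2}e^{U_i}\|_p \sim \delta_i^{2/p-2}$ and $\||x-\xi_i|^{\al_i-1}e^{U_i}\|_p \sim \delta_i^{2/p-1}$. Using $\delta_i^{\al_i} = d_i\e$, the $O(|x-\xi_i|)$ contribution to $\ml{R}|_{A_i}$ has $L^p$ norm of order $\e^{(2-p)/(p\al_i)}$ and the $O(\e^\sigma)$ contribution of order $\e^{\sigma-2(p-1)/(p\al_i)}$; both are positive powers of $\e$ once $p$ is sufficiently close to $1$. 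Summing over all regions yields $\|\ml{R}\|_p \le C\e^{\sigma_p}$ with $\sigma_p > 0$. The main obstacle is precisely keeping $\sigma_p > 0$ in the limit $p \to 1^+$: the $L^p$-norm of $|x-\xi_i|^{\al_i-2}e^{U_i}$ blows up like $\e^{-2(p-1)/(p\al_i)}$, and this growth must be absorbed either by the extra factor $|x-\xi_i|$ (gaining a power of $\delta_i$) or by the $O(\e^\sigma)$ smallness produced by the uniform expansion in Corollary \ref{coro927}.
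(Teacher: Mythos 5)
Your proposal is correct and follows essentially the same route as the paper: expand $P_\e U$ via Corollary \ref{coro927} on each annulus $A_i$, use the choices \eqref{choice1} and \eqref{choiced} to identify $V_1 e^{P_\e U}$ (resp.\ $V_2e^{-\tau P_\e U}$) with the rescaled bubble $\frac{1}{2\e}|x-\xi_i|^{\al_i-2}e^{U_i}$ on the ``matching'' annuli, show the normalizing integrals equal $\frac{\la_1}{2\e}(1+O(\e^\sigma))$ and $\frac{\la_2\tau^2}{2\e}(1+O(\e^\sigma))$, cancel against the corresponding piece of $\Delta P_\e U$, and finally estimate the remainders in $L^p$ by rescaling, which forces $p$ close to $1$. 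The only place your write-up is slightly looser than the paper is the contribution of $V_1 e^{P_\e U}$ on the ``opposite'' annuli $A_i$ with $i>m_1$: you say $e^{P_\e U}$ is bounded there, which is true (the bracketed quantity in \eqref{1503} is $O(1)$ at both edges of $A_i$ and large in the middle, so its $-1/\tau$ power is uniformly bounded), but the paper instead integrates directly, obtaining the $O(1)$ bound \eqref{0054}--\eqref{0107} without needing the pointwise bound; both versions work and give the same $O(1)$ correction, which is negligible against $\la_1/(2\e)$. The last sentence of your plan is slightly mis-phrased: the difficulty is not that $\sigma_p$ might vanish as $p\to 1^+$ (that is the easy regime), but that the exponent $\sigma - 2(p-1)/(p\al_i)$ is only positive for $p$ close enough to $1$, which is exactly why the lemma is stated with $p\in(1,p_0)$.
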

\begin{proof}[\dem] Setting $\Lambda=\max\{\alpha_1,\dots,\alpha_m\}$, by \eqref{1517}-\eqref{1518} and the change of variable $x=\delta_i y+\xi_i$ let us estimate 
\begin{eqnarray}
\int_{A_i} V_1 e^{P_\e U} dx&=&
\frac{V_1(\xi_i) e^{2 \pi \rho_i}}{2 \al_i^2\de_i^{\al_i}}\int_{A_i}  |x-\xi_i|^{\alpha_i-2} e^{U_i}\lf[1+ O\bigg( \e+ \sum_{k=1}^m\e^{\frac{2}{\alpha_k-2}} +|x-\xi_i| \bigg)\rg]dx \nonumber\\
&=& \frac{\alpha_i^2}{\e} \int_{\frac{\e_i}{\delta_i}\leq |y|\leq \frac{\eta}{\delta_i} } \frac{|y|^{\alpha_i-2}}{(1+|y|^{\alpha_i})^2 }  \lf[1+  O\bigg( \e+ \sum_{k=1}^m\e^{\frac{2}{\alpha_k-2}} +\delta_i |y| \bigg)\rg] dy \nonumber \\
&=& \frac{2\pi \alpha_i}{\e} \lf[1+O(\e^{\frac{1}{\Lambda}}) \rg] \label{1036}
\end{eqnarray}
for any $i=1,\dots,m_1$ and similarly
\begin{eqnarray} \label{1037}
\int_{A_i} V_2 e^{-\tau P_\e U} dx=\frac{2\pi \alpha_i}{\e} \lf[1+O(\e^{\frac{1}{\Lambda}}) \rg]
\end{eqnarray}
for any $i=m_1+1,\dots,m$, in view of \eqref{choice1}, \eqref{choiced} and
$$\int\limits_{\mathbb R^2}  {|y|^{\alpha_i-2}\over \left(1+|y|^{\al_i}\right)^2}dy={2\pi\over\alpha_i}.$$
By \eqref{1518} we have that
\begin{eqnarray} \label{1503} V_1 e^{P_\e U} =
O\lf(\Big[\frac{|x-\xi_i|^{\alpha_i-2}}{\delta_i^{\alpha_i}} e^{U_i}\Big]^{-\frac{1}{\tau}}\rg)\quad \text{uniformly in $A_i$, for $i=m_1+1,\dots,m$,}  
\end{eqnarray} 
and by \eqref{choice1} and \eqref{1503} we get the estimate
\begin{eqnarray}
\int_{A_i} V_1 e^{P_\e U} dx&=&
O\bigg(\de_i^{\al_i\over \tau} \int_{A_i}  \Big[|x-\xi_i|^{\alpha_i-2} e^{U_i}\Big]^{-{1\over\tau} } dx \bigg) = O\bigg(\de_i^{{\al_i+2 \over \tau} +2} \int\limits_{\frac{\e_i}{\delta_i}\leq |y|\leq \frac{\eta}{\delta_i} } \Big[\frac{|y|^{\alpha_i-2}}{(1+|y|^{\alpha_i})^2 }  \Big]^{-{1\over \tau} } dy\bigg)\nonumber\\
&=&  O\bigg(\de_i^{{\al_i +2 \over \tau} +2} \bigg[\int_{\frac{\e_i}{\delta_i}}^1  s^{1-{\al_i-2\over\tau} }\,ds + \int_1^\frac{\eta}{\delta_i}  s^{1+{\al_i+2\over\tau} }\,ds\bigg]\bigg) = O(1) \label{0054}
\end{eqnarray}
for all $i=m_1+1,\dots,m$. Similarly, by \eqref{choice1}-\eqref{1517} we deduce that
\begin{eqnarray} \label{0107}
\int_{A_i} V_2 e^{-\tau P_\e U} dx=O(1)
\end{eqnarray}
for $i=1,\dots,m_1$, in view of
\begin{eqnarray}\label{1502}
V_2 e^{-\tau P_\e U} =
O\lf(\Big[\frac{|x-\xi_i|^{\alpha_i-2}}{\delta_i^{\alpha_i}} e^{U_i}\Big]^{-\tau}\rg) \quad \text{ uniformly in $A_i$, for $i=1,\dots,m_1$ .}
\end{eqnarray}
Therefore, by using \eqref{1519}, \eqref{1036}-\eqref{1037} and \eqref{0054}-\eqref{0107} we deduce that
\begin{eqnarray} \label{ex3}
&& \hspace{-0.4cm}\int_{\Omega_\e} V_1 e^{ PU_\e} dx =\sum_{i=1}^{m _1} \int_{A_i} V_1 e^{P_\e U} dx  +\sum\limits_{i=m_1+1}^{m }\int\limits_{A_i} V_1 e^{P_\e U} dx  +O(1)
={\lambda_1 \over 2 \e}[1  +O(\e^\sigma)]\\
&& \hspace{-0.4cm} \int_{\Omega_\e} V_2 e^{ -\tau PU_\e} dx =\sum_{i=1}^{m _1} \int_{A_i} V_2 e^{-\tau P_\e U} dx  +\sum\limits_{i=m_1+1}^{m }\int\limits_{A_i} V_2 e^{-\tau P_\e U} dx  +O(1)
={\lambda_2 \tau^2  \over 2 \e}[1 +O(\e^\sigma)] \label{1523}
\end{eqnarray}
 in view of \eqref{choice0}, where $\sigma=\frac{1}{\Lambda}$. 
 
\medskip \noindent Since
$$\Delta P_\e U=\left\{ \begin{array}{ll} -|x-\xi_i|^{\alpha_i-2} e^{U_i}+O(\e)&\hbox{in }A_i, \ i=1,\dots,m,\\ 
\frac{ 1}{\tau} |x-\xi_i|^{\alpha_i-2}e^{U_i}+O(\e)& \hbox{in }A_i,\ i=m_1+1,\dots,m\\
O(\e) &\hbox{in }\Omega_\e \setminus \ds \bigcup_{i=1}^m A_i \end{array} \right.$$
in view of  \eqref{choice1}, by \eqref{1517}-\eqref{1518} and \eqref{1502}-\eqref{1523} we can estimate the error term $\ml{R}$ as:
 \begin{eqnarray} \label{ex5}
\ml{R} =|x-\xi_i|^{\al_i-2}e^{U_i} O\Big(\e^\sigma+|x-\xi_i|\Big) +O(\e^\sigma) 
\end{eqnarray}
in $A_i$, $i=1,\dots,m_1$, and 
\begin{eqnarray} \label{ex6.1}
\ml {R} =- \frac{1}{\tau}  |x-\xi_i|^{\alpha_i-2} e^{U_i} O\Big(\e^\sigma+|x-\xi_i|\Big)+O(\e^\sigma) 
\end{eqnarray}
in $A_i$, $i=m_1+1,\dots,m$, while $ \ml {R}=O(\e)$ does hold in $\Omega_\e \setminus \ds \bigcup_{i=1}^m A_i $. By \eqref{ex5}-\eqref{ex6.1} we finally get that there exist $\e_0>0$ small, $p_0>1$ close to $1$ so that $\|\ml {R}\|_p=O(\e^{\sigma_p})$ for all $0<\e\leq \e_0$ and $1<p\leq p_0$, for some $\sigma_p>0$.
\end{proof}


\section{The nonlinear problem and proof of main result}\label{sec4}

In this section we shall study the following nonlinear problem:
\begin{equation}\label{ephi}
\left\{ \begin{array}{ll}
\mathcal L(\phi)= -[\mathcal R+\Lambda(\phi)+\mathcal N(\phi)] & \text{in } \Om_\e\\
\phi=0, &\text{on }\fr\Om_\e,
\end{array} \right.
\end{equation}
where the linear operators $\mathcal L,\Lambda$ are defined as
\begin{equation}\label{ol}
\mathcal L(\phi) = \Delta \phi + K_1\lf(\phi - {1\over \la_1}\int_{\Om_\e} K_1 \phi  dx \rg) + K_2 \lf(\phi - {1\over \la_2\tau^2} \int_{\Om_\e} K_2\phi dx \rg)
\end{equation}
and
\begin{equation}\label{ola}
\begin{split}
\Lambda (\phi) =&\,  \la_1 {V_1 e^{P_\e U}\over\int_{\Om_\e} e^{P_\e U} dx}\lf(\phi - {\int_{\Om_\e} V_1e^{P_\e U}\phi dx \over\int_{\Om_\e} V_1e^{P_\e U} dx } \rg)+\la_2\tau^2 {V_2e^{-\tau P_\e U}\over\int_{\Om_\e} e^{-\tau P_\e U}dx }\lf(\phi - {\int_{\Om_\e} V_2 e^{-\tau P_\e U}\phi dx \over\int_{\Om_\e} V_2(x)e^{-\tau P_\e U}dx} \rg)\\
&\, - K_1\lf(\phi - {1\over \la_1}\int_{\Om_\e} K_1 \phi  dx \rg)-  K_2 \lf(\phi - {1\over \la_2\tau^2} \int_{\Om_\e} K_2\phi dx \rg)
\end{split}
\end{equation}
with
\begin{equation}\label{K12}
K_1=\sum_{k=1}^{m_1}|x-\xi_k|^{\al_k-2}e^{U_k}, \quad K_2=\sum_{k=m_1+1}^{m}|x-\xi_k|^{\al_k-2}e^{U_k}.
\end{equation}
The nonlinear term $\mathcal N(\phi) $ is given by
\begin{equation}\label{nlt}
\begin{aligned}
\mathcal N(\phi)= &\la_1 \lf[{V_1e^{P_\e U+\phi}\over\int_{\Om_\e}
V_1e^{P_\e U+\phi} dx}-{V_1 e^{P_\e U}\over\int_{\Om_\e}V_1e^{P_\e U} dx}-{V_1e^{P_\e U}\over\int_{\Om_\e}V_1
e^{P_\e U} dx}\lf(\phi - {\int_{\Om_\e} V_1e^{P_\e U}\phi dx \over\int_{\Om_\e}V_1
e^{P_\e U} dx} \rg)\rg]\\ & - \la_2\tau \lf[{V_2 e^{-\tau( P_\e U+\phi) }dx \over\int_{\Om_\e}
V_2 e^{-\tau (P_\e U+\phi)}dx}-{V_2 e^{-\tau P_\e U}\over\int_{\Om_\e} V_2 e^{-\tau P_\e U} dx } +\tau{V_2 e^{ - \tau P_\e U}\over\int_{\Om_\e}V_2
e^{-\tau P_\e U} dx }\lf(\phi - {\int_{\Om_\e} V_2e^{ - \tau P_\e U}\phi dx \over\int_{\Om_\e} V_2
e^{-\tau P_\e U} dx } \rg)\rg].
\end{aligned}
\end{equation}
It is readily checked that $\phi$ is a solution to \eqref{ephi} if and only if $u_\e$ given by \eqref{ansatz} is a solution to \eqref{p22}. In section \ref{sec3} we will prove the following result.

\begin{proposition}\label{elle}
For any $p>1,$ there exists $\e_0>0 $ and  $C>0$ such that for any $\e\in(0,\e_0)$ and $h \in L^p(\Om_\e)$ there exists a unique $\phi\in H^1_0(\Om_\e)$ solution of
\begin{equation}\label{pl}
\mathcal L(\phi)=h \ \hbox{ in }\ \Om_\e,\ \ \ \phi=0\ \hbox{ on }\ \partial\Omega_\e,
\end{equation}
which satisfies
\begin{equation}\label{estphi}
\|\phi\|\le C|\log\e|\ \|h\|_p.
\end{equation}
\end{proposition}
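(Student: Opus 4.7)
The plan is to establish the a priori bound \eqref{estphi} by a contradiction/blow-up argument, and then deduce existence and uniqueness from the Fredholm alternative: since for each fixed $\e$ the potentials $K_1,K_2$ lie in $L^\infty(\Om_\e)$, the zero-order terms in $\mathcal L$ define a compact perturbation of $\Delta:H^1_0(\Om_\e)\to H^{-1}(\Om_\e)$, so $\mathcal L$ has Fredholm index zero; because $L^p\hookrightarrow H^{-1}$ in dimension two for $p>1$, injectivity via \eqref{estphi} gives bijectivity onto $L^p$. A Green's representation reduces \eqref{estphi} to the pointwise statement $\|\phi\|_\infty\le C|\log\e|\|h\|_p$, the factor $|\log\e|$ arising because the Green's function of $-\Delta$ on $\Om_\e$ has $L^{p'}$-mass of order $|\log\e|$ as the holes shrink.

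Suppose by contradiction that there exist sequences $\e_n\downarrow 0$, $\phi_n\in H^1_0(\Om_{\e_n})$ and $h_n\in L^p$ with $\mathcal L(\phi_n)=h_n$, $\|\phi_n\|_\infty=1$ and $|\log\e_n|\,\|h_n\|_p\to 0$. For each $i=1,\dots,m$ I rescale by $\ti\phi_{n,i}(y):=\phi_n(\de_iy+\xi_i)$ on $\de_i^{-1}(\Om_{\e_n}-\xi_i)$, which exhausts $\R^2$ because $\e_i/\de_i\to 0$ by \eqref{choice1}. The computation behind \eqref{1036} yields
\[
\de_i^2\,K_{j(i)}(\de_iy+\xi_i)\to |y|^{\al_i-2}e^{w_i},\qquad w_i=\log\frac{2\al_i^2}{(1+|y|^{\al_i})^2},
\]
locally, where $j(i)\in\{1,2\}$ is the group index of $\xi_i$, while the other $K$-term contributes nothing at this scale. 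The nonlocal averages $A_{n,1}:=\la_1^{-1}\int K_1\phi_n\,dx$ and $A_{n,2}:=(\la_2\tau^2)^{-1}\int K_2\phi_n\,dx$ stay bounded since $\|\phi_n\|_\infty=1$ and $\int K_j\to\la_1$ or $\la_2\tau^2$ via \eqref{choice0}; along a subsequence $A_{n,j}\to A_j$. In the limit $\ti\phi_{n,i}$ converges in $C^1_{\rm loc}(\R^2)$ to a bounded $v_i$ solving
\[
\Delta v_i+|y|^{\al_i-2}e^{w_i}\bigl(v_i-A_{j(i)}\bigr)=0\qquad\text{in }\R^2.
\]
Since $\al_i>2$ and $\al_i\notin 2\N$, the classical non-degeneracy result for the singular Liouville equation ensures that the bounded kernel of the linearization is one-dimensional, spanned by the dilation mode $Z_i(y)=\frac{1-|y|^{\al_i}}{1+|y|^{\al_i}}$, the translation modes being absent precisely because $\al_i\notin 2\N$. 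Hence $v_i=A_{j(i)}+c_iZ_i$ for some $c_i\in\R$, and the Dirichlet condition $\ti\phi_{n,i}\equiv 0$ on $|y|=\e_i/\de_i\to 0$, passed to the limit at $y=0$, gives $A_{j(i)}+c_i=0$.

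Away from the $\xi_i$'s, $\phi_n\to\phi^*$ locally uniformly, with $\phi^*$ harmonic in $\Om\sm\{\xi_1,\dots,\xi_m\}$, vanishing on $\fr\Om$, and with residue at $\xi_i$ equal to $\int|y|^{\al_i-2}e^{w_i}(v_i-A_{j(i)})\,dy=c_i\int|y|^{\al_i-2}e^{w_i}Z_i\,dy$, a nonzero multiple of $c_i$ computable via the substitution $t=|y|^{\al_i}$. Matching this Green's representation of $\phi^*$ with the far-field value $v_i(\infty)=A_{j(i)}-c_i=2A_{j(i)}=\phi^*(\xi_i)$ at each $\xi_i$ produces a homogeneous linear system in $(A_1,A_2,c_1,\dots,c_m)$ which is nonsingular for $\e$ small; therefore all $c_i$ and $A_j$ vanish, so $v_i\equiv 0$ and $\phi^*\equiv 0$. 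Together with the blow-up convergence this forces $\|\phi_n\|_\infty\to 0$, contradicting $\|\phi_n\|_\infty=1$. The main difficulty is precisely this last step — verifying nonsingularity of the matching linear system — which is best handled by testing $\mathcal L(\phi_n)=h_n$ against the projections $P_\e Z_i$ (and against suitable localizations of $K_j$) and invoking the expansions of Lemma \ref{ewfxi} and Corollary \ref{coro927} to identify the leading-order matrix; the hypothesis $\al_i\notin 2\N$ is what makes that matrix invertible. Once the a priori estimate is established, the Fredholm alternative closes the proof.
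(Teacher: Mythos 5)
Your overall architecture — contradiction argument, blow-up rescaling around each $\xi_i$, identification of the limit kernel as multiples of $Y_{0i}$ using $\al_i\notin2\N$, and Fredholm alternative to close — matches the paper's, and you correctly foresee that testing against projections of the type $P_\e Z_i$ is the right tool for the decisive step. But the route you sketch has two genuine problems.

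First, you normalize $\|\phi_n\|_\infty=1$ and claim that a Green's representation reduces \eqref{estphi} to a pointwise $L^\infty$ estimate. The estimate \eqref{estphi} is in the $H^1_0$-norm, and in two dimensions $H^1_0(\Om_\e)\not\hookrightarrow L^\infty(\Om_\e)$, so the two are not interchangeable; moreover a bound $\|\phi_n\|_\infty\le1$ does not give the compactness you invoke. The paper normalizes $\|\phi_n\|_{H^1_0(\Om_{\e_n})}=1$ and works in the weighted spaces $H_{\al_i}(\R^2)$, $L_{\al_i}(\R^2)$, obtaining only weak $H_{\al_i}$ and strong $L_{\al_i}$ convergence of the rescaled functions — not $C^1_{\rm loc}(\R^2)$ convergence, which your argument presupposes.

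Second, and more seriously, the claim that the Dirichlet condition on $|y|=\e_i/\de_i\to0$ passes to the limit as $v_i(0)=0$, giving $A_{j(i)}+c_i=0$, is incorrect. A shrinking hole has vanishing $H^1$-capacity, so the boundary condition does \emph{not} force the limit to vanish at the origin — this is precisely the mechanism producing the $|\log\e|$ loss in \eqref{estphi}. In the paper the rescaled limits are $a_jY_{0j}-\ti c_{j(i)}$ and the constraint actually satisfied is $(\al_j-1)a_j+2\ti c_i=0$ (Claim~\ref{claim3}), which disagrees with your $A_{j(i)}+c_i=0$ (i.e.\ $a_i=\ti c_{j(i)}$). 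Obtaining the correct relation requires testing against $P_\e(\eta_j+\gamma_j^*\eta_{0j})$ with the carefully tuned constants $\gamma_{ij},\ti\gamma_{ij},\gamma_j^*$ of \eqref{gamaij}--\eqref{gamatij}, followed by a second test against $\gamma_{jj}^{-1}P_\e Z_{0j}$ to get $\sum_j\al_j(\al_j-2)a_j=0$ (Claim~\ref{claim4}), and only then does the linear algebra force $\ti c_i=0$ and $a_j=0$. You acknowledge that proving nonsingularity of your "matching linear system" is the main difficulty and defer to exactly this testing procedure — but that is the proof, and as written your proposal replaces it with an incorrect pointwise boundary argument.
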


We are now in position to study the nonlinear problem \eqref{ephi} and to prove our main result Theorem \ref{main}.

\begin{proposition}\label{p3}
There exist $p_0>1$ and $\e_0>0$ so that for any $1<p<p_0$ and 
all $0<\e\leq \e_0$, the problem \eqref{ephi} admits a unique solution $\phi(\e) \in H_0^1(\Om_\e)$, where $\mathcal L$, $\mathcal R$, $\Lambda(\phi)$ and $\mathcal N$ are given by \eqref{ol}, \eqref{R}, \eqref{ola} and \eqref{nlt}, respectively. Moreover, there is a constant $C>0$ such that
$$\|\phi\|_\infty\le C\e^{\sigma_p} |\log \e |,$$
for some $\sigma_p>0$.
\end{proposition}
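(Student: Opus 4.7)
The plan is to recast \eqref{ephi} as the fixed-point equation
$$\phi = \mathcal{T}(\phi) := -\mathcal{L}^{-1}\bigl[\mathcal{R} + \Lambda(\phi) + \mathcal{N}(\phi)\bigr]$$
via the invertibility of $\mathcal{L}$ supplied by Proposition \ref{elle}, and then apply the Banach contraction principle on a small ball of $H_0^1(\Om_\e)$ measured in the $L^\infty$-type norm of that proposition. Three pieces of input are needed: from Lemma \ref{erre}, the ansatz error $\|\mathcal{R}\|_p \le C\e^{\sigma_p}$; from \eqref{estlaphi}, to be established in this section, the linear bound $\|\Lambda(\phi)\|_p = o(1)\,\|\phi\|_\infty$ as $\e\to 0$, which reflects that $\Lambda$ is precisely the mismatch between the genuine linearization at $P_\e U$ of the two exponential nonlinearities and the model operator $\mathcal{L}$ built from $K_1, K_2$; and from \eqref{estnphi}, the quadratic-type Lipschitz bound $\|\mathcal{N}(\phi_1) - \mathcal{N}(\phi_2)\|_p \le C(\|\phi_1\|_\infty + \|\phi_2\|_\infty)\, \|\phi_1 - \phi_2\|_\infty$ on a small ball, which reflects that $\mathcal{N}$ starts at second order in $\phi$.

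Composing these with the estimate of Proposition \ref{elle} yields
$$\|\mathcal{T}(\phi)\|_\infty \le C|\log\e|\bigl(\e^{\sigma_p} + o(1)\|\phi\|_\infty + \|\phi\|_\infty^2\bigr).$$
I would then set $\mathcal{B}_\e := \{\phi \in H_0^1(\Om_\e) : \|\phi\|_\infty \le C_0\,\e^{\sigma_p}|\log\e|\}$ for a large fixed $C_0$ and check at once that $\mathcal{T}(\mathcal{B}_\e)\subset \mathcal{B}_\e$ for $\e$ small, since both $|\log\e|\cdot o(1)$ and $|\log\e|\cdot C_0\,\e^{\sigma_p}|\log\e|$ tend to zero. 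The same reasoning applied to $\mathcal{T}(\phi_1) - \mathcal{T}(\phi_2)$, using linearity of $\Lambda$ and the quadratic Lipschitz bound on $\mathcal{N}$, produces a global Lipschitz constant of size $C|\log\e|\bigl(o(1) + O(\e^{\sigma_p}|\log\e|)\bigr) < 1$ for $\e$ small, and Banach's theorem delivers a unique fixed point $\phi = \phi(\e) \in \mathcal{B}_\e$ with the claimed bound. By construction $u_\e = P_\e U + \phi$ then solves \eqref{p22}, yielding also Theorem \ref{main} in view of the blow-up behaviour of $P_\e U$ established in Corollary \ref{coro927}.

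The main obstacle is the linear estimate \eqref{estlaphi}, namely showing that $\Lambda$, which at first glance looks $O(1)$, actually carries a small pre-factor in $L^p$. Inside each annulus $A_i$, combining Corollary \ref{coro927} with the integral expansions \eqref{ex3}-\eqref{1523} shows that the true weighted kernels $\la_i V_i e^{\pm \tau P_\e U}/\!\int_{\Om_\e} V_i e^{\pm\tau P_\e U}\,dx$ match $K_i$ up to a relative error $O(\e^{\sigma} + |x-\xi_i|)$, and this local error is what furnishes the small factor after an $L^p$-integration with $p>1$ sufficiently close to $1$, the admissibility of such $p$ being ensured by $\al_i > 2$. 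Away from the blow-up points both the genuine kernel and $K_i$ are small bounded functions differing by $O(\e^{\sigma})$, so their contribution is absorbed. The quadratic estimate \eqref{estnphi} is more routine and follows from a second-order Taylor expansion of the two exponential ratios in $\phi$ combined with uniform $L^p$-control of $K_1+K_2$.
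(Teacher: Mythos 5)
Your proposal follows essentially the same fixed-point scheme as the paper: rewrite \eqref{ephi} as $\phi=-T(\mathcal R+\Lambda(\phi)+\mathcal N(\phi))$, invoke Proposition \ref{elle}, Lemma \ref{erre}, the small linear bound \eqref{estlaphi} on $\Lambda$ and the Lipschitz bound \eqref{estnphi} on $\mathcal N$, and run Banach's contraction in a ball of radius $O(\e^{\sigma_p}|\log\e|)$; your structural readings of $\Lambda$ (kernel mismatch of relative size $O(\e^\sigma+|x-\xi_i|)$ inside each annulus, integrable in $L^p$ for $p$ near $1$) and of $\mathcal N$ (second order in $\phi$) match the paper's. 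One point to fix: Proposition \ref{elle} controls the $H_0^1$ norm $\|\phi\|=\|\nabla\phi\|_{L^2}$, not $\|\phi\|_\infty$, and the paper's proof of Proposition \ref{p3} runs entirely in that Sobolev norm (the $\|\phi\|_\infty$ in the statement is best read as $\|\phi\|$), so the contraction mapping and the ball $\mathcal F_\nu$ should be set up in $H_0^1(\Om_\e)$, with \eqref{estlaphi}-\eqref{estnphi} phrased accordingly as $\|\Lambda(\phi)\|_p\le C\e^{\sigma_p'}\|\phi\|$ and $\|\mathcal N(\phi_1)-\mathcal N(\phi_2)\|_p\le C\e^{\sigma_p''}\|\phi_1-\phi_2\|$.
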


Here, $\sigma_p$ is the same as in \eqref{re1}. We shall use the following estimates.

\begin{lemma}
There exist $p_0>1$ and $\e_0>0$ so that for any $1<p<p_0$ and 
all $0<\e\leq \e_0$ it holds
\begin{equation}\label{estlaphi}
\| \Lambda (\phi) \|_p  \le C\e^{\sigma_p'} \|\phi\|,
\end{equation}
for all $\phi\in H_0^1(\Om_\e)$ with $\|\phi\|\le \nu \e^{\sigma_p}|\log\e|$, for some $\sigma_p'>0$.
\end{lemma}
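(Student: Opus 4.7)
The plan is to split $\Lambda(\phi) = \Lambda_1(\phi) + \Lambda_2(\phi)$ according to the $V_1$- and $V_2$-contributions, writing
\begin{equation*}
\Lambda_1(\phi) = W_1\bigl(\phi - \bar\phi_1\bigr) - K_1\bigl(\phi - \tilde\phi_1\bigr),\qquad W_1 := \frac{\la_1 V_1 e^{P_\e U}}{\int_{\Om_\e} V_1 e^{P_\e U}\,dx},
\end{equation*}
with $\bar\phi_1 := \int_{\Om_\e} V_1 e^{P_\e U}\phi\,dx/\int_{\Om_\e} V_1 e^{P_\e U}\,dx$, $\tilde\phi_1 := \la_1^{-1}\int_{\Om_\e} K_1\phi\,dx$, and $\Lambda_2$ the analogous object built from $V_2$, $-\tau P_\e U$ and $K_2$. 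So each $\Lambda_j$ measures the discrepancy between the true linearized coefficient and the simpler model kernel $K_j$ that defines $\ml L$. The proof reduces to a pointwise comparison $W_j \approx K_j$ plus a matching bound on the averaging correction $W_j\bar\phi_j - K_j\tilde\phi_j$.

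The pointwise comparison is the core step. From \eqref{ex3} we have $W_1 = 2\e V_1 e^{P_\e U}[1 + O(\e^\sigma)]$, and the expansion \eqref{1517} combined with \eqref{choice1}-\eqref{choiced} gives on each $A_i$ with $i\leq m_1$
\begin{equation*}
2\e V_1 e^{P_\e U} = |x-\xi_i|^{\al_i-2}e^{U_i}\bigl[1 + O(\e^\sigma + |x-\xi_i|)\bigr],
\end{equation*}
while the off-center summands $|x-\xi_k|^{\al_k-2}e^{U_k}$, $k\neq i$, in $K_1$ are of order $\de_k^{\al_k}=O(\e)$ on $A_i$. By \eqref{1503} and the boundedness of the bubbles away from their singular points, both $W_1$ and $K_1$ are $O(\e)$ on $A_i$ for $i>m_1$ and on $\Om_\e\sm\bigcup_i A_i$. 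Hence
\begin{equation*}
|W_1 - K_1| \leq C\e^\sigma\, |x-\xi_i|^{\al_i-2}e^{U_i} + C\,|x-\xi_i|^{\al_i-1}e^{U_i} + C\e\quad\text{on }A_i,\ i\leq m_1,
\end{equation*}
and $|W_1-K_1|=O(\e)$ elsewhere; a symmetric bound holds for $|W_2-K_2|$ using \eqref{1502} and \eqref{1518}.

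To convert this into the desired $L^p$ estimate, I would use H\"older and Sobolev. After rescaling $y=(x-\xi_i)/\de_i$ one has $\||x-\xi_i|^{\al_i-2}e^{U_i}\|_s = O(\de_i^{(2-2s)/s})$ for $s>1$, and the extra $|x-\xi_i|$ factor brings an additional $\de_i\sim\e^{1/\al_i}$. For $p>1$ close to $1$, pairing a H\"older exponent $s$ just above $1$ with the Moser-Trudinger-type bound $\|\phi\|_q \leq C\sqrt{q}\|\phi\|$ for $q$ large yields $\|(W_1-K_1)\phi\|_p \leq C\e^{\sigma_p'}\|\phi\|$ for some $\sigma_p'>0$. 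For the averaging correction, the decomposition
\begin{equation*}
W_1\bar\phi_1 - K_1\tilde\phi_1 = (W_1-K_1)\tilde\phi_1 + W_1(\bar\phi_1 - \tilde\phi_1)
\end{equation*}
reduces it to the same type of estimate, once one observes that $|\tilde\phi_1|\leq C\|\phi\|$ by H\"older/Sobolev, that $\|W_1\|_p\leq C$ uniformly in $\e$ by another rescaling, and that
\begin{equation*}
|\bar\phi_1 - \tilde\phi_1| \leq C\la_1^{-1}\Bigl|\int_{\Om_\e}(W_1-K_1)\phi\,dx\Bigr| + C\e^\sigma\|\phi\|_q \leq C\e^{\sigma_p'}\|\phi\|.
\end{equation*}
The identical scheme applied to $\Lambda_2$ concludes. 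The main obstacle is keeping $\sigma_p' > 0$: since $\||x-\xi_i|^{\al_i-2}e^{U_i}\|_s = O(\de_i^{(2-2s)/s})$ degenerates as $s\to1^+$, the H\"older exponent must be fine-tuned against the $\e^\sigma$ and $|x-\xi_i|$ gains from the pointwise error, exploiting the freedom to choose $p$ close to $1$. The smallness hypothesis $\|\phi\|\leq \nu\e^{\sigma_p}|\log\e|$ is actually inessential here because $\Lambda$ is linear in $\phi$; it is stated for consistency with the way the lemma is invoked in the subsequent fixed-point argument.
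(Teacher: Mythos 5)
Your proposal is correct and follows essentially the same route as the paper: you reduce $\Lambda$ to the kernel discrepancy $W_j-K_j$, estimate $\|W_j-K_j\|_q$ by the same rescaling and pointwise comparison from Corollary \ref{coro927}, and distribute the averaging terms in a slightly different but algebraically equivalent grouping before closing with H\"older and the embeddings. Your remark that the smallness hypothesis on $\|\phi\|$ is superfluous here because $\Lambda$ is linear is also accurate (the paper's proof never invokes it for this lemma).
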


\begin{proof}[\dem]
For simplicity, we denote $W_i= \ds{ \la_i\tau^{2(i-1)} V_i(x) e^{(-\tau)^{i-1} P_\e U}\over \int_{\Om_\e} V_i(x) e^{(-\tau)^{i-1} P_\e U } dx}$ for $i=1,2$. By using \eqref{1517}-\eqref{1519}, \eqref{ex3}-\eqref{1523} and similar computations as to obtain \eqref{ex5}-\eqref{ex6.1}, we find that
$$W_{1}(x)=|x-\xi_i|^{\al_i-2}e^{U_i} \lf[1+O(|x-\xi_i|+\e^{\ti\sigma_1})\rg]$$
uniformly for $x\in A_i$, $i=1,\dots,m_1$, $W_1(x)=O(\e)$ uniformly for $\ds x\in \Om\sm \bigcup_{i=1}^{m_1} A_i$ and
$$W_{2}(x )=|x - \xi_i |^{\al_i-2}e^{U_i} \lf[1+O(|x-\xi_i|+\e^{\ti\sigma_2}) \rg] $$
uniformly for $x\in A_i$, $i=m_1+1,\dots, m$ and $W_2(x)=O(\e)$ uniformly for $\ds x\in \Om\sm \bigcup_{i=m_1+1}^{m} A_i$. Also, from the definition of $K_1$ and $K_2$ in \eqref{K12} it follows that $\ds K_1=\sum_{i=1}^{m_1} O\lf(\de_i^{\al_i}\rg) =O(\e)$ uniformly for $\ds x\in\Om\sm \bigcup_{i=1}^{m_1} A_i$ and $\ds K_2=\sum_{i=m_1+1}^{m} O\lf(\de_i^{\al_i}\rg)=O(\e)$ uniformly for $\ds x\in\Om\sm \bigcup_{i=m_1+1}^{m} A_i $. Hence, for any $q\ge 1$ there holds
\begin{equation*}
\begin{split}
\lf\|W_1- K_1\rg\|_q^q&\,\le C\bigg[\sum_{i=1}^{m_1} \int_{ A_i} \lf(|x-\xi_i |^{\al_i-2}e^{ U_i }[|x-\xi_i| +\e^{\ti\sigma_1 }] \rg)^q+\int_{\Om\sm \cup_{i=1}^{m_1} A_i} \lf( | W_1|^q + |K_1|^q\rg)\bigg] \\
&\,\le C\bigg[\sum_{i=1}^{m_1} \bigg( \de_i^{2-q} \int_{ A_i-\xi_i\over \de_i } \bigg| {2\al_i^2 |y |^{\al_i-1} \over (1+|y|^{\al_i} )^2 }\bigg|^q\, dy  +\e^{\ti \sigma_1 q }  \int_{ A_i-\xi_i\over \de_i } \bigg| {2\al_i^2 |y |^{\al_i-2} \over (1+|y|^{\al_i} )^2 }\bigg|^q\, dy  \bigg)+\e^q\bigg]\\
&\,\le C \e^{ q \sigma'_{1,q} } 
\end{split}
\end{equation*}
for some $\sigma'_{1,q}$. Similarly, we find that
\begin{equation*}
\begin{split}
\lf\|W_2- K_2\rg\|_q^q
&\,\le C\bigg[\sum_{i=m_1+1}^{m} \bigg( \de_i^{2-q} \int_{ A_i-\xi_i\over \de_i } \bigg| {2\al_i^2 |y |^{\al_i-1} \over (1+|y|^{\al_i} )^2 }\bigg|^q\, dy  +\e^{\ti\sigma_2 q }  \int_{ A_i-\xi_i\over \de_i } \bigg| {2\al_i^2 |y |^{\al_i-2} \over (1+|y|^{\al_i} )^2 }\bigg|^q\, dy \bigg)+\e^q \bigg] \\
&\, \le C \e^{q\sigma'_{2,q} } 
\end{split}
\end{equation*}
for some $\sigma'_{2,q}$. It is possible to see that taking $q>1$ close enough to 1, we get that $\sigma'_{i,q}>0$ for $i=1,2$. 

\medskip \noindent Notice that $\Lambda$ is a linear operator and we re-write $\Lambda(\phi)$ as
\begin{equation*}
\begin{split}
\Lambda(\phi)=&\,\sum_{i=1}^2\bigg[ \lf(W_i-K_i\rg)\phi-{1\over\la_i\tau^{2(i-1)} } \lf(W_i-K_i \rg) \int_{\Om_\e}W_i\phi+ {1\over\la_i \tau^{2(i-1)} } K_i \int_{\Om_\e} \lf(K_i-W_i\rg)\phi
 \bigg].
\end{split}
\end{equation*}
Hence, we get that
\begin{equation*}
\begin{split}
\| \Lambda (\phi) \|_p  \le &\,\sum_{i=1}^2\bigg[ \lf\|\lf(W_i-K_i\rg)\phi \rg\|_p+{1\over\la_i \tau^{2(i-1)}} \lf \|\lf(W_i-K_i \rg) \int_{\Om_\e}W_i\phi \rg\|_p + {1\over\la_i \tau^{2(i-1)}} \lf\| K_i \int_{\Om_\e} \lf(K_i-W_i\rg)\phi\rg\|_p
 \bigg]\\
\le &\, \sum_{i=1}^2\bigg[\lf\| W_i-K_i\rg\|_{pr_{i0}} \|\phi \|_{ps_{i0} }+{1\over\la_i \tau^{2(i-1)} } \lf \| W_i-K_i \rg\|_p \|W_i\|_{r_{i1}}  \|\phi\|_{s_{i1} } \\
&\qquad+ {1\over\la_i \tau^{2(i-1)} } \| K_i\|_p \lf\| K_i-W_i\rg\|_{r_{i2}} \|\phi \|_{s_{i2}} \bigg]\\
\le &\, C\sum_{i=1}^2\bigg[\e^{\sigma'_{i,pr_{i0} } }  \|\phi \| + \e^{\sigma'_{i,p} +\sigma_{3,r_{i1}  }}  \|\phi\| + \e^{\sigma'_{i,r_{i2} } +\sigma_{3,p} }  \|\phi \| \bigg]\\
\le &\, C \e^{\sigma'_{p } }  \|\phi \|,
\end{split}
\end{equation*}
where $\ds \sigma'_p=\min\lf\{\sigma'_{i,pr_{i0} }; \sigma'_{i,p} +\sigma_{3,r_{i1}  } ;  \sigma'_{i,r_{i2}} +\sigma_{3,p  } \mid i=1,2; j=1,\dots,m_1\rg\}$ with $ r_{ij} $, $s_{ij} $, $i=1,2$, $j=0,1,2$ satisfying $\dfrac{1}{ r_{ij} }+\dfrac{1}{ s_{ij} }=1$. We have used that
\begin{equation*}
\begin{split}
\|W_1\|_{r_{11}}^{r_{11} } &\le C\lf[ \sum_{j=1}^{m_1} \de_j^{2-2r_{11} }\int_{A_j-\xi_j\over \de_j} \lf| {2\al_j^2|y|^{\al_j -2}\over (1+|y|^{\al_i})^2 } \rg|^{r_{11} }\, dy + \e^{ r_{11} }\rg] \le C\lf[\sum_{i=1}^{m_1}\e^{2- 2r_{11}\over \al_j } + \e^{r_{11}}\rg]\le C \e^{ r_{11} \sigma_{3,r_{11} }} 
\end{split}
\end{equation*}
and
$$\|W_2\|_{r_{21}}^{r_{21} } \le C\lf[ \sum_{j=m_1+1}^{m} \de_j^{2- 2r_{21} }\int_{A_j-\xi_j\over \de_j} \lf| {2\al_j^2|y|^{\al_j -2}\over (1+|y|^{\al_i})^2 } \rg|^{r_{21} }\, dy + \e^{ r_{21} }\rg] \le C \e^{ r_{21} \sigma_{3,r_{21} }},
$$
where for $q>1$ we denote $\ds \sigma_{3, q } = \min\lf\{ {2- 2  q\over \al_j q }\ :\ j=1,\dots,m\rg\}$, and similarly that $\| K_i\|_p^p\le C\e^{p\sigma_{3,p} }$. Note that 
$\ds {2- 2  q\over\al_j q}<1$ for any $j=1,\dots,m$. 
Furthermore, we have used the H\" older's inequality $\|uv\|_q\le \|u\|_{qr}\|v\|_{qs}$ with $\ds{1\over r }+{1\over s }=1$ and the inclusions $L^{p}(\Om_\e)\hookrightarrow L^{pr}(\Om_\e)$  for any $r>1$ and $H^{1}_0(\Om_\e)\hookrightarrow L^{q}(\Om_\e)$ for any $q>1$. Let us stress that we can choose $p$, $r_{ij}$ and $s_{ij}$, $i=1,2$, $j=0,1,2$, close enough to 1 such that $\sigma_p'>0$.
\end{proof}

\medskip

\begin{lemma}
There exist $p_0>1$ and $\e_0>0$ so that for any $1<p<p_0$ and 
all $0<\e\leq \e_0$ it holds
\begin{equation}\label{estnphi}
\| \mathcal N (\phi_1)- \ml N(\phi_2) \|_p  \le C\e^{\sigma_p''} \|\phi_1-\phi_2\|
\end{equation}
for all $\phi_i\in H_0^1(\Om_\e)$ with $\|\phi_i\|\le \nu \e^{\sigma_p}|\log\e|$, $i=1,2$, and for some $\sigma_p''>0$. In particular, we have that
\begin{equation}\label{estnphi1}
\| \ml N (\phi) \|_p  \le C\e^{\sigma_p''} \ \|\phi\|
\end{equation}
for all $\phi\in H_0^1(\Om_\e)$ with $\|\phi\|\le \nu \e^{\sigma_p}|\log\e|$.
\end{lemma}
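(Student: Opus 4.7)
The starting observation is that $\mathcal{N}(\phi)$ is, by construction, the second-order Taylor remainder at $\phi=0$ of the map
$$G(\phi) \;:=\; \la_1\, \frac{V_1 e^{P_\e U + \phi}}{\int_{\Om_\e} V_1 e^{P_\e U + \phi}\,dx} \;-\; \la_2 \tau\, \frac{V_2 e^{-\tau(P_\e U + \phi)}}{\int_{\Om_\e} V_2 e^{-\tau(P_\e U + \phi)}\,dx},$$
so that $\mathcal{N}(0)=0$ and $D\mathcal{N}(0)=0$. In particular, \eqref{estnphi1} follows from \eqref{estnphi} by setting $\phi_2=0$, and I concentrate on \eqref{estnphi}.

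Applying the fundamental theorem of calculus twice yields
$$\mathcal{N}(\phi_1)-\mathcal{N}(\phi_2)=\int_0^1\!\!\int_0^1 D^2 G(s\phi_t)[\phi_t,\,\phi_1-\phi_2]\,ds\,dt,\qquad \phi_t:=\phi_2+t(\phi_1-\phi_2).$$
Introducing the probability densities
$$\wti W_i(\eta):=\frac{V_i\,e^{(-\tau)^{i-1}(P_\e U+\eta)}}{\int_{\Om_\e} V_i\,e^{(-\tau)^{i-1}(P_\e U+\eta)}\,dx}\qquad (i=1,2),$$
a direct differentiation shows that $D^2 G(\eta)[\phi,\psi]$ is a finite linear combination, with coefficients $\la_1$ and $\la_2\tau^2$, of terms of the form
$$\wti W_i(\eta)\,\phi\psi,\quad \wti W_i(\eta)\,\phi\!\int_{\Om_\e}\!\!\wti W_i(\eta)\psi\,dx,\quad \wti W_i(\eta)\!\int_{\Om_\e}\!\!\wti W_i(\eta)\phi\psi\,dx,\quad \wti W_i(\eta)\!\int_{\Om_\e}\!\!\wti W_i(\eta)\phi\,dx\!\int_{\Om_\e}\!\!\wti W_i(\eta)\psi\,dx.$$

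The crux, and main obstacle, is to extend the $L^r$--estimates on $K_i$ established in the proof of \eqref{estlaphi} to the perturbed weights $\wti W_i(\eta)$, uniformly over $\eta\in H^1_0(\Om_\e)$ with $\|\eta\|\leq \nu\e^{\sigma_p}|\log\e|$. Concretely, I will prove that for $r>1$ close to $1$ there exist $\bar\sigma_{i,r}>0$ with $\|\wti W_i(\eta)\|_r\leq C\e^{\bar\sigma_{i,r}}$. Compared to the previous lemma the novelty is the extra factor $e^{(-\tau)^{i-1}\eta}$ in both numerator and denominator. For the numerator, since $\|\eta\|\to 0$, the Moser--Trudinger inequality implies that $\int_{\Om_\e} e^{q|\eta|}\,dx$ is uniformly bounded for every fixed $q\geq 1$, and a H\"older splitting inserted into the scaled computation already carried out for $\|W_i\|_r$ yields the bound with a slightly smaller but still positive exponent of $\e$. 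For the denominator, Jensen's inequality with the probability measure $d\mu_i:=V_i e^{(-\tau)^{i-1}P_\e U}\,dx/Z_i$ gives $\int_{\Om_\e} V_i e^{(-\tau)^{i-1}(P_\e U+\eta)}\,dx\geq Z_i\,e^{(-\tau)^{i-1}\bar\eta_i}$ with $\bar\eta_i:=\int_{\Om_\e}\eta\,d\mu_i$; a H\"older bound on $\bar\eta_i$ combined with $\|\eta\|\to 0$ ensures $e^{(-\tau)^{i-1}\bar\eta_i}=1+o(1)$, so by \eqref{ex3}--\eqref{1523} the normalizer remains of order $\e^{-1}$.

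Once the perturbed weight estimate is in place, H\"older's inequality with three factors together with the Sobolev embedding $H^1_0(\Om_\e)\hookrightarrow L^q(\Om_\e)$ for every $1\leq q<\infty$ yields, in complete analogy with the proof of \eqref{estlaphi},
$$\|D^2 G(s\phi_t)[\phi_t,\,\phi_1-\phi_2]\|_p \leq C\e^{\sigma}\,\|\phi_t\|\,\|\phi_1-\phi_2\|$$
for $p>1$ close enough to $1$ and some $\sigma>0$. Integrating in $s,t\in[0,1]$ and using $\|\phi_t\|\leq \nu\e^{\sigma_p}|\log\e|$ to absorb the factor $\|\phi_t\|$ into a slightly smaller but still positive exponent of $\e$ produces \eqref{estnphi} with some $\sigma_p''>0$; \eqref{estnphi1} then follows by taking $\phi_2=0$.
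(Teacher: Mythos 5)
Your proposal follows essentially the same strategy as the paper's own proof: both recognize $\mathcal{N}$ as the second-order Taylor remainder of the nonlinearity at $\phi=0$, express $\mathcal{N}(\phi_1)-\mathcal{N}(\phi_2)$ through the second derivative $D^2 G$ (the paper via the mean value theorem applied twice, you via the integral form of Taylor's theorem — these are interchangeable), and then estimate each term of $D^2 G$ by H\"older together with $L^r$-bounds on the weight densities. The one point where you genuinely diverge is in controlling the effect of the small perturbation $\eta$ on these weights: you invoke Moser--Trudinger for the numerator and Jensen's inequality for the denominator, whereas the paper appeals to the elementary bound $|e^a-1|\le|a|$ for all $a\in\mathbb{R}$ — which is false for $a>0$ (where $e^a-1>a$), so your route is actually the more careful way to carry out what the paper is implicitly doing; in effect you are filling a small gap in the published argument while keeping the same overall architecture.
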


\begin{proof}[\dem] We will argue in the same way as in \cite[Lemma 5.1]{op}. First, we point out that
\begin{equation*}
\ml N(\phi)=\sum_{i=1}^2\la_i(-\tau)^{i-1}\lf\{f_i(\phi)-f_i(0)-f'_i(0)[\phi] \rg\},\ \ \ \text{where} \ \ \
f_i(\phi)= {V_i(x) e^{(-\tau)^{i-1}(P_\e U+\phi)} \over \int_{\Om_\e} V_i(x)e^{  (-\tau)^{i-1}( P_\e U+\phi)} }.
\end{equation*}
Hence, by the mean value theorem we get that
\begin{equation}\label{mvtn}
\begin{split}
\ml N(\phi_1)-\ml N(\phi_2) &\,=\sum_{i=1}^2 \la_i(-\tau)^{i-1} \lf\{f_i(\phi_1)-f_i(\phi_2)-f'_i(0)[\phi_1-\phi_2] \rg\}\\
&\,=\sum_{i=1}^2\la_i (-\tau)^{i-1} \lf\{f_i'(\phi_{\theta_i} )-f_i'(0) \rg\} [\phi_1-\phi_2] =\sum_{i=1}^2\la_i ( - \tau )^{i-1} f_i''(\ti \phi_{\mu_i}) [\phi_{\theta_i}, \phi_1-\phi_2],
\end{split}
\end{equation}
where $\phi_{\theta_i}=\theta_i\phi_1+(1-\theta_i)\phi_2$, $\ti\phi_{\mu_i}=\mu_i\phi_{\theta_i}$ for some $\theta_i,\mu_i\in[0,1]$, $i=1,2$, and
\begin{equation*}
\begin{split}
f_i''(\phi)[\psi,v]=&\,\tau^{2(i-1)}\bigg[ { V_i(x)e^{ u_i } \psi v\over \int_{\Om_\e} V_i(x)e^{ u_i} }- { V_i(x)e^{u_i } v \int_{\Om_\e} V_i(x)e^{ u_i }\psi \over \big(\int_{\Om_\e} V_i(x)e^{ u_i }\big)^2 }- { V_i(x)e^{ u_i } \psi \int_{\Om_\e} V_i(x)e^{ u_i } v \over \big(\int_{\Om_\e} V_i(x)e^{ u_i } \big)^2 }\\
&\,- { V_i(x)e^{ u_i }  \int_{\Om_\e} V_i(x)e^{ u_i }\psi v\over \big(\int_{\Om_\e} V_i(x)e^{ u_i }\big)^2 } +2{ V_i(x)e^{ u_i } \int_{\Om_\e} V_i(x) e^{ u_i  }v \int_{\Om_\e} V_i(x)e^{ u_i }\psi \over \big(\int_{\Om_\e} V_i(x)e^{ u_i }\big)^3 }\bigg],
\end{split}
\end{equation*}
where for simplicity we denote $ u_i= (-\tau)^{i-1}(P_\e U+\phi)$. Using H\"older's inequalities we get that
\begin{equation}\label{hin}
\begin{split}
\lf\| f_i''(\phi)[\psi,v]\rg\|_p\le&\, |\tau|^{2(i-1)}\bigg[ {\| V_ie^{ u_i } \|_{pr_i}\over \|V_ie^{ u_i}\|_1 }\|\psi\|_{ps_i} \|v\|_{pt_i}+ {\| V_ie^{u_i }\|_{pr_i}^2  \over \|V_ie^{ u_i }\|_1^2 }\|v\|_{pq_i}\|\psi\|_{\ti r_i} + {\| V_ie^{u_i }\|_{pr_i}^2  \over \|V_ie^{ u_i }\|_1^2 }\|\psi \|_{pq_i}\|v\|_{\ti r_i}\\
&\,+ {\| V_ie^{ u_i }\|_p \over \|V_ie^{ u_i }\|_1^2 }  \|V_ie^{ u_i }\|_{pr_i}\|\psi\|_{\ti r_i r_i}\| v\|_{\ti r_i q_i} +2{ \|V_ie^{ u_i }\|_p  \over \|V_ie^{ u_i }\|_1^3 } \|V_ie^{ u_i  }\|_{pr_i}^2 \|v\|_{\ti r_i}\|\psi\|_{\ti r_i}\bigg]\\
\le&\,C \lf[{\| V_ie^{ u_i } \|_{pr_i}\over \|V_ie^{ u_i}\|_1 }+ {\| V_ie^{u_i }\|_{pr_i}^2  \over \|V_ie^{ u_i }\|_1^2 }  +{ \|V_ie^{ u_i }\|_{p r_i}^3  \over \|V_ie^{ u_i }\|_1^3 }  \rg]\|\psi\| \| v\|,
\end{split}
\end{equation}
with $\ds {1\over r_i} +{1\over s_i} + {1\over t_i}=1$, $\ds {1\over r_i}+{1\over q_i}=1$, $\ds {1\over pr_i} + {1\over \ti r_i}=1$. We have used the H\" older's inequality, the inclusions presented in the previous Lemma and $\ds \|uvw\|_q\le \|u\|_{qr}\|v\|_{qs}\|w\|_{qt}$ with $\ds{1\over r }+{1\over s }+{1\over t}=1$. Now, let us estimate $\ds {\| V_ie^{ u_i } \|_{pr_i}\over \| V_i e^{ u_i}\|_1 }$ with $\phi=\ti\phi_{\mu_i}$, $i=1,2$. For $i=1$, arguing exactly as in the proof of \eqref{ex3} we obtain that
$$\lf\| V_1e^{P_\e U  } \rg\|_{q}^q=\sum_{i=1}^{m_1} O\lf( \de_i^{2  - (\al_i+2)q } \rg)=\sum_{i=1}^{m_1} O\lf( \e^{ {2-(\al_i+2)q\over \al_i} }\rg) \quad\text{for any } q\ge 1.$$
Moreover, \eqref{ex3} implies that $\ds \lf\| V_1 e^{ P_\e U } \rg\|_{1}\ge {C_1 \over \e }$ for some $C_1> 0$. For $i=2$, similarly we obtain that
$$\lf\| V_2e^{ -\tau P_\e U } \rg\|_{q}^q=\sum_{i=m_1+1}^{m} O\lf( \e^{ {2-(\al_i+2)q\over \al_i} }\rg)\quad\text{for any } q\ge 1\quad\text{and}\quad \lf\| V_2e^{- \tau P_\e U } \rg\|_{1}\ge {C_2\over \e }\quad\text{for some } C_2> 0.$$
Note that $\ds \sigma_{3,q} -1\le {2-(\al_i+2)q\over \al_i q}\quad\text{ for any} \quad i=1,\dots,m.$
 
On the other hand, using the estimate $|e^a-1|\le |a|$ for any $a\in\R$ we have that
\begin{equation*}
\begin{split}
\lf\| V_ie^{(-\tau)^{i-1}(P_\e U +\ti\phi_{\mu_i})}- V_ie^{(-\tau)^{i-1} P_\e U }\rg\|_q&=\lf(\int_{\Om_\e} \lf| e^{(-\tau)^{i-1}P_\e U} \rg|^q \ \lf| V_ie^{(-\tau)^{i-1}\ti\phi_{\mu_i}}- 1  \rg|^q \rg)^{1/q}\\
&\le \lf(\int_{\Om_\e} \lf| V_ie^{ (-\tau)^{i-1}P_\e U} \rg|^q \ \lf| \ti\phi_{\mu_i} \rg|^q \rg)^{1/q}\\
&\le \lf\| V_ie^{(-\tau)^{i-1}P_\e U }\rg\|_{qs_i}\lf\| \ti\phi_{\mu_i} \rg\|_{qt_i},
\end{split}
\end{equation*}
with $\ds{1\over s_i}+{1\over t_i}=1$, $i=1,2$. Hence, it follows that
$$\lf\| V_ie^{(-\tau)^{i-1}(P_\e U+\ti\phi_{\mu_i} ) }- V_ie^{(-\tau )^{i-1} P_\e U }\rg\|_q\le C \e^{ \sigma_{3,qs_i} -1 } \lf\| \ti\phi_{\mu_i} \rg\|\le C \e^{ \sigma_{3,qs_i} -1}\ \e^{\sigma_p}|\log \e|,$$
in view of $\|\ti\phi_{\mu_i}\|\le\nu  \e^{\sigma_p}|\log \e|$, $i=1,2$. In particular, if $q=1$ we get
$$\lf\| V_ie^{(-\tau)^{i-1}(P_\e U+\ti\phi_{\mu_i} ) }- V_ie^{(-\tau )^{i-1} P_\e U } \rg\|_{1}=O\lf( \e^{ \sigma_{3,s_i} -1+ \sigma_p}|\log \e| \rg)\qquad\text{for any } s_i> 1.$$
By the previous estimates we find that $\ds \lf\| V_i e^{(-\tau)^{i-1} (P_\e U+\ti\phi_{\mu_i})}\rg\|_{q}=O\lf( \e^{ \sigma_{3,qs_i} -1 + \sigma_p}|\log \e|+ \e^{ \sigma_{3,q} -1 } \rg)$.
Also, choosing $s_i$, $i=1,2$, close enough to 1, we get that  $\sigma_p +\sigma_{3,s_i}>0$ and 
$$\lf\| V_ie^{(-\tau)^{i-1}(P_\e U+\ti\phi_{\mu_i} ) }\rg\|_{1}\ge {C_i \over \e } - C \e^{\sigma_{3,s_i} -1+ \sigma_p}|\log \e| \ge { 1 \over \e } \lf(C_i -  C \e^{\sigma_{3,s_i} + \sigma_p }\  |\log \e| \rg)\ge {C_i \over 2 \e } .$$
Taking $q=pr_i$, we obtain the estimate for $i=1,2$
\begin{equation}\label{qie}
\begin{split}
{\| V_ie^{(-\tau)^{i-1}( P_\e U+\ti\phi_{\mu_i}) } \|_{pr_i}\over \| V_i e^{(-\tau)^{i-1}( P_\e  U +\ti\phi_{\mu_i} ) }\|_1 }&=O\lf(  \e\lf[ \e^{\sigma_p+\sigma_{3,pr_is_i}-1 }|\log \e| + \e^{ \sigma_{3,pr_i} -1 } \rg]\rg)\\
&=O\lf(  \e^{\sigma_{3,pr_i} } \Big[ \e^{\sigma_p+\sigma_{3,pr_is_i} - \sigma_{pr_i} } |\log \e|+ 1\Big]     \rg)
=O\lf(  \e^{ \sigma_{3,pr_i} }    \rg)
\end{split}
\end{equation}
choosing $s_i>1$ close enough to 1 so that $\ds \sigma_p +\sigma_{3,pr_is_i} - \sigma_{pr_i}>0$, $i=1,2$. 
Now, we can conclude the estimate by using \eqref{mvtn}-\eqref{qie} to get
\begin{equation*}
\begin{split}
\|\ml N(\phi_1)-\ml N(\phi_2)\|_p &\,\le \sum_{i=1}^2\la_i |\tau|^{i-1}\lf\| f_i''(\ti \phi_{\mu_i}) [\phi_{\theta_i}, \phi_1-\phi_2]\rg\|_p  \le\,C \sum_{i=1}^2\la_i { \|e^{ (- \tau)^{i-1}(P_\e U+\ti\phi_{\mu_i} )}\|_{p r_i}^3  \over \|e^{  (-\tau)^{i-1}(P_\e U+\ti\phi_{\mu_i} )}\|_1^3 }  \| \phi_{\theta_i} \| \| \phi_1-\phi_2\|\\
&\, \le\,C \sum_{i=1}^2\la_i \e^{\sigma_p + 3 \sigma_{3,pr_i} }  | \log \e| \| \phi_1-\phi_2\|
 \le\,C \e^{\sigma_p'' }  \|\phi_1-\phi_2\|,
\end{split}
\end{equation*}
where $\sigma''_p={1\over 2}\min\{ \sigma_p +3 \sigma_{3,pr_i}  \ : \ i=1,2\} >0$ choosing $r_i$ close to 1 so that $\sigma_p+3 \sigma_{3,pr_i} >0$ for $i=1,2$. Let us stress that $p>1$ is chosen so that $\sigma_p>0$.
\end{proof}

\begin{proof}[\bf Proof of the Proposition \ref{p3}] Notice that from Proposition \ref{elle} problem \eqref{ephi} becomes
$$\phi=-T(\ml R+\Lambda(\phi)+\ml N(\phi)):=\ml{A}(\phi).$$
For a given number $\nu>0$, let us consider $
\ml{F}_\nu = \{\phi\in H : \| \phi \| \le
\nu \e^{\sigma_p} |\log\e|\}$. 
From the Proposition \ref{elle}, \eqref{re1}, \eqref{estlaphi} and \eqref{estnphi1}, we get for any $\phi\in \ml{F}_\nu$,
\begin{equation*}
\begin{split}
\|\ml{A}(\phi)\| & \le C | \log \e |\lf[ \|\ml R \|_p+\|\Lambda(\phi)\|_p +
\|\ml N(\phi)\|_p\rg] 
\le C |\log \e |\lf[ \e^{\sigma_p}+\e^{\sigma_p'} \|\phi\|+\e^{\sigma_p''} \|\phi\|\rg]\\
& \le C \e^{\sigma_p} | \log \e |\lf[1 +2 \nu\e^{\min\{\sigma_p',\sigma_p''\}} |\log\e|\rg].
\end{split}
\end{equation*}
Given any $\phi_1,\phi_2\in\ml{F}_\nu$, we have that $\ml{A}(\phi_1)-\ml{A}(\phi_2) = - T\lf(\Lambda(\phi_1-\phi_2)+\ml N(\phi_1)
- \ml N(\phi_2)\rg)$ and
\begin{equation*}
\begin{split}
\|\ml{A}(\phi_1)-\ml{A}(\phi_2)\| & \le C |\log
\e|\lf[\|\Lambda(\phi_1-\phi_2)\|_p+\lf\| \ml N(\phi_1) - \ml N(\phi_2)\rg\|_p\rg]
\le C\e^{\min\{\sigma_p',\sigma_p''\}}|\log\e| \ \|\phi_1-\phi_2\|,
\end{split}
\end{equation*}
with $C$ independent of $\nu$, by using Proposition \ref{elle} and \eqref{estlaphi}-\eqref{estnphi}. Therefore, for some $\sigma>0$ we get that $\|\ml{A}(\phi_1)-\ml{A}(\phi_2)\|  \le C \e^{\sigma} |\log \e| \|\phi_1-\phi_2\|$. It follows that for all $\e$ sufficiently small $\ml{A}$ is a
contraction mapping of $\ml{F}_\nu$ (for $\nu$ large enough), and
therefore a unique fixed point of $\ml{A}$ exists in $\ml{F}_\nu$.

\end{proof}

\begin{proof}[\bf Proof of the Theorem \ref{main}] The existence of a solution 
$$u_\e=\sum_{j=1}^{m_1}P_\e U_j - \dfrac{1}{\tau } \sum_{j=m_1+1}^m P_\e U_j + \phi$$ 
to equation \eqref{p22} follows directly by Proposition \ref{p3}. The asymptotic shape of the solution $u_\e$ as $\e\to 0^+$ follows by the definition of $U_j$, Lemma \ref{ewfxi} and the choice of the parameters \eqref{choice1}-\eqref{choice3}.
\end{proof}


\section{The linear theory} \label{sec3}

In this section we present the invertibility of the linear operator $\mathcal L$ defined in \eqref{ol}. Roughly speaking, in the scale annulus $\de_i^{-1} (B_i-\xi_i)$ the operator $\ml L$ approaches to the following linear operator in $\R^2$
$$L_i(\phi)=\Delta\phi+{2\al_i^2|y|^{\al_i-2}\over (1+|y|^{\al_i})^2}\phi,\qquad i=1,\dots, m.$$
It is well known that the bounded solutions of $L_i(\phi)=0$ in
$\R^2$ are precisely linear combinations of the functions
\begin{equation*}
Y_{1i}(y) = { |y|^{\al_i\over 2} \over 1+|y|^{\al_i}}\cos\Big({\al_i\over 2}\theta\Big),\quad Y_{2i}(y) = { |y|^{\al_i\over 2} \over 1+|y|^{\al_i}}\sin\Big({\al_i\over 2}\theta\Big)\quad\text{and}\quad Y_{0i}(y) = {1-|y|^{\al_i}\over 1+|y|^{\al_i}},
\end{equation*}
which are written in polar coordinates for $i=1,\dots, m$. See \cite{DEM5} for a proof. In our case, we will consider solutions of $L_i(\phi)=0$ such that $\int_{\R^2}|\nabla\phi(y)|^2\, dy<+\infty$, which reduce to multiples of $Y_{0i}$. See \cite[Theorem A.1]{op} for a proof. Another key element in the study of $\ml L$, which shows technical details, is to get rid of the presence of
\begin{equation}\label{ctjphi}
\ti c_{j}(\phi)=-{1\over \la_j\tau^{2(j-1)}}\int_{\Om_{\e_n}} K_j\phi\qquad j=1,2.
\end{equation}

Following ideas presented in \cite{op}, let us introduce the following Banach spaces for $j=1,2$
$$L_{\al_i}(\R^2)=\lf\{u\in W_{\text{loc} }^{1,2}(\R^2)\ :\ \int_{\R^2}{|y|^{\al_i-2}\over (1+|y|^{\al_i})^2}|u(y)|^2\, dy<+\infty\rg\}$$
and
$$H_{\al_i}(\R^2)=\lf\{u\in W_{\text{loc} }^{1,2}(\R^2)\ :\ \int_{\R^2}|\nabla u(y)|^2\, dy+\int_{\R^2}{|y|^{\al_i-2}\over (1+|y|^{\al_i})^2}|u(y)|^2\, dy<+\infty\rg\}$$
endowed with the norms
$$\|u\|_{L_{\al_i}}:=\lf(\int_{\R^2}{|y|^{\al_i-2}\over (1+|y|^{\al_i})^2}|u(y)|^2\, dy\rg)^{1/2}$$
and
$$\|u\|_{H_{\al_i}}:=\lf(\int_{\R^2} |\nabla u(y)|^{2}\, dy+\int_{\R^2}{|y|^{\al_i-2}\over (1+|y|^{\al_i})^2}|u(y)|^2\, dy\rg)^{1/2}.$$
It is important to point out the compactness of the embedding $i_{\al_i}:H_{\al_i}(\R^2)\to L_{\al_i}(\R^2)$ (see for example \cite{GP}).

\begin{proof}[\bf Proof of the Proposition \ref{elle}]
The proof will be done in several steps. Let us assume by contradiction the existence of $p>1$, sequences $\e=\ep_n\to0$ (with a slight abuse of notation), functions $h_n\in L^p(\Om_{\ep_n})$, $\phi_n\in W^{2,2}(\Om_{\ep_n})$ such that
\begin{equation}\label{eqphin}
\mathcal L(\phi_n)=h_n\ \ \text{in}\ \ \Om_{\ep_n},\ \ \phi_n=0\ \ \text{on}\ \ \fr\Om_{\ep_n},
\end{equation}
with $\|\phi_n\|=1$ and $|\log\ep_n|\ \|h_n\|_p=o(1)$ as $n\to+\infty$. We will shall omit the subscript $n$ in $\de_{i,n} =\de_i$. Recall that $\de_i^{\al_i}=d_{i,n}\ep_n$ and points $\xi_1,\dots,\xi_m\in\Om$ are fixed. 

Now, define $\Phi_{i,n}(y):=\phi_{n}(\xi_i+\de_i y)$ for $y\in\Om_{i,n}:=\de_i^{-1}(\Om_{\ep_n}-\xi_i)$, $i=1,\dots,m$. Thus, extending $\phi_n=0$ in $\R^2\sm\Om_{\ep_n}$ we can prove the following fact.

\begin{claim}
The sequence $\{\Phi_{i,n}\}_n$ converges (up to a subsequence) to $\Phi_i^*$ weakly in $H_{\al_i}(\R^2)$ and strongly in $L_{\al_i}(\R^2)$.
\end{claim}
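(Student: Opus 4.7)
The plan is to establish a uniform a priori bound for $\Phi_{i,n}$ in $H_{\alpha_i}(\R^2)$; reflexivity then extracts a weakly converging subsequence, and the compact embedding $H_{\alpha_i}\hookrightarrow L_{\alpha_i}$ cited from \cite{GP} upgrades the convergence to strong convergence in $L_{\alpha_i}(\R^2)$. All the substance is in the a priori bound.

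The gradient component of the $H_{\alpha_i}$-norm is free: by the two-dimensional conformal invariance of the Dirichlet integral, together with the extension of $\phi_n$ by zero outside $\Omega_{\epsilon_n}$, the change of variable $x=\xi_i+\delta_i y$ yields
$$\int_{\R^2}|\nabla_y\Phi_{i,n}|^2\,dy=\int_{\Omega_{\epsilon_n}}|\nabla_x\phi_n|^2\,dx=\|\phi_n\|^2=1.$$
For the weighted $L^2$ component, the same rescaling gives
$$\int_{\R^2}\frac{|y|^{\alpha_i-2}}{(1+|y|^{\alpha_i})^2}\,\Phi_{i,n}^2\,dy=\frac{1}{2\alpha_i^2}\int_{\Omega_{\epsilon_n}}|x-\xi_i|^{\alpha_i-2}e^{U_i}\,\phi_n^2\,dx,$$
and the integrand on the right is a single summand of either $K_1$ or $K_2$ depending on whether $i\le m_1$ or $i>m_1$. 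It therefore suffices to bound $\int K_j\phi_n^2\,dx$ uniformly in $n$ for $j=1,2$.

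To this end, test the equation $\mathcal{L}(\phi_n)=h_n$ against $\phi_n$ in $H^1_0(\Omega_{\epsilon_n})$: using the definition of $\mathcal{L}$ and integrating by parts produces
$$\int K_1(\phi_n-\bar\phi_{K_1})^2+\int K_2(\phi_n-\bar\phi_{K_2})^2=\|\phi_n\|^2+\int h_n\phi_n=1+o(1),$$
where $\bar\phi_{K_j}:=(\lambda_j\tau^{2(j-1)})^{-1}\int K_j\phi_n$ and $\int h_n\phi_n=o(1)$ since $|\log\epsilon_n|\,\|h_n\|_p=o(1)$ and $\|\phi_n\|_{p'}$ is controlled via the Sobolev embedding $H^1_0\hookrightarrow L^{p'}$. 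Since $\int K_j\phi_n^2=\int K_j(\phi_n-\bar\phi_{K_j})^2+\lambda_j\tau^{2(j-1)}\bar\phi_{K_j}^2$, the desired bound reduces to controlling the means $\bar\phi_{K_j}$, which one handles by combining the Moser--Trudinger integrability of $\phi_n\in H^1_0(\Omega_{\epsilon_n})$ with the Dirichlet trace condition $\phi_n=0$ on the small spheres $\partial B(\xi_k,\epsilon_k)$ (which forbids $\phi_n$ from behaving like a large constant near the blow-up points carrying the mass of $K_j$).

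Once $\Phi_{i,n}$ is bounded in $H_{\alpha_i}(\R^2)$, extract a subsequence converging weakly to some $\Phi_i^*\in H_{\alpha_i}(\R^2)$, and the compactness of $i_{\alpha_i}$ recorded in \cite{GP} immediately promotes the convergence to strong in $L_{\alpha_i}(\R^2)$. The main hurdle is precisely the separate control of the means $\bar\phi_{K_j}$: the energy identity from testing bounds only the weighted \emph{oscillation} $\int K_j(\phi_n-\bar\phi_{K_j})^2$, so an argument is needed to rule out the means blowing up, and this is delicate because $K_j$ develops Dirac-type concentration at the $\xi_k$'s as $\epsilon_n\to 0$, which prevents a direct H\"older bound on $\int K_j\phi_n$.
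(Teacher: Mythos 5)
Your overall scheme is the same as the paper's: the gradient term is free by conformal invariance of the Dirichlet energy under the dilation $x=\xi_i+\delta_i y$, the weighted $L^2$-norm of $\Phi_{i,n}$ is exactly $\tfrac{1}{2\alpha_i^2}\int |x-\xi_i|^{\alpha_i-2}e^{U_i}\phi_n^2$, and pairing the equation with $\phi_n$ produces the energy identity
$$\int K_1\phi_n^2+\int K_2\phi_n^2=1+\lambda_1\bar\phi_{K_1}^2+\lambda_2\tau^2\bar\phi_{K_2}^2+o(1),$$
which is precisely \eqref{snpsi} written back in the unscaled variable (the paper's $\tilde c_{j,n}$ equals $-\bar\phi_{K_j}$). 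After that, both you and the paper observe that what remains is to bound the means $\bar\phi_{K_j}$, and then appeal to reflexivity and the compact embedding $H_{\alpha_i}\hookrightarrow L_{\alpha_i}$ from \cite{GP}. Up to this point you are fully aligned.

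The gap is in the one step you did not carry out, and your proposed heuristic for it does not work. You suggest that the Dirichlet condition $\phi_n=0$ on $\partial B(\xi_k,\epsilon_k)$ together with the unit Dirichlet energy ``forbids $\phi_n$ from behaving like a large constant near the blow-up points.'' This is false at the quantitative level required. A function $u\in H^1$ with $\int_{\R^2}|\nabla u|^2=1$ that vanishes on a ball of radius $r$ can attain values of order $\sqrt{\log(R/r)}$ at distance $R$ from that ball (consider a truncated logarithmic profile). In the rescaled picture $\Phi_{k,n}$ vanishes only on $B(0,\epsilon_k/\delta_k)$ with $\epsilon_k/\delta_k\to 0$, while the weight $\tfrac{|y|^{\alpha_k-2}}{(1+|y|^{\alpha_k})^2}$ carries its mass at $|y|\sim 1$; so the bound you can extract this way is $|\bar\phi_{K_j}|\lesssim\sqrt{|\log\epsilon|}$, which is not $O(1)$ and does not close the argument — indeed with means of that size the energy identity would only give $\|\Phi_{i,n}\|_{L_{\alpha_i}}=O(\sqrt{|\log\epsilon|})$, not a uniform bound. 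Moser–Trudinger integrability of $\phi_n$ does not rescue this either, because $K_j$ concentrates as a Dirac and the Hölder pairing $\|K_j\|_p\|\phi_n\|_{p'}$ blows up as $\epsilon\to 0$ for any $p>1$.

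What the paper does instead is to extract a linear system for $(\tilde c_{1,n},\tilde c_{2,n})$. Multiplying the rescaled equation \eqref{eqPhi1} by a fixed smooth compactly supported $\chi$ and using \eqref{dik12} (so that $\delta_i^2 K_1(\xi_i+\delta_i y)$ is, up to $O(\delta_i^2\epsilon)$, either the limit potential $\tfrac{2\alpha_i^2|y|^{\alpha_i-2}}{(1+|y|^{\alpha_i})^2}$ or negligible, depending on whether $i\le m_1$ or $i>m_1$, and analogously for $K_2$), one obtains — by picking one index $i\le m_1$ and one index $j>m_1$ — a $2\times 2$ system for $\tilde c_{1,n},\tilde c_{2,n}$ that is diagonally dominant (the off-diagonal entries are $o(1)$), with $O(1)$ right-hand side. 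Choosing $\chi$ so that $\int \tfrac{2\alpha_k^2|y|^{\alpha_k-2}}{(1+|y|^{\alpha_k})^2}\chi\,dy\neq 0$ makes the system solvable and yields $\tilde c_{1,n},\tilde c_{2,n}=O(1)$. Your proof needs this (or an equivalent) argument in place of the capacity heuristic; without it the a priori bound in $H_{\alpha_i}(\R^2)$ is not established.

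One minor additional point: the identity you wrote, $\int K_j\phi_n^2=\int K_j(\phi_n-\bar\phi_{K_j})^2+\lambda_j\tau^{2(j-1)}\bar\phi_{K_j}^2$, uses $\int K_j=\lambda_j\tau^{2(j-1)}$ exactly, whereas in fact $\int K_j=\lambda_j\tau^{2(j-1)}+O(\epsilon^{\tilde\sigma})$; this introduces an error term $O(\epsilon^{\tilde\sigma})\bar\phi_{K_j}^2$ which is harmless once the means are known to be bounded, but strictly speaking must be carried along until then.
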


\begin{proof}[\dem]
First, we shall show that the sequence $\{\Phi_{i,n}\}_n$ is bounded in $H_{\al_i}(\R^2)$. Notice that for $i=1,\dots,m$
$$\|\Phi_{i,n}\|_{H^1_0(\Omega_{i,n})}=\int_{\Om_{i,n}}\de_i^2|\grad \phi_{i,n}(\xi_i+\de_i y)|^2\,dy=\int_{\Om_{\ep_n} }|\grad\phi_n(x)|^2\, dx=1.$$ 
Thus, we want to prove that there is a constant $M>0$ such for all $n$ (up to a subsequence)
$$\|\Phi_{i,n}\|_{L_{\al_i}}^2=\int_{\Omega_{i,n} } {|y|^{\al_i-2}\over (1+|y|^{\al_i})^2} \Phi_{i,n}^2(y)\, dy\le M.$$
Notice that for any $i\in\{1,\dots,m\}$ we find that
 in $\Om_{i,n}$
\begin{equation}\label{eqPhi1}
\begin{split}
\lap \Phi_{i,n}&+\de_i^2K_1(\xi_i+\de_i y) \lf(\Phi_{i,n} +\ti c_{1,n} \rg)+\de_i^2K_2(\xi_i+\de_i y) \lf(\Phi_{i,n} +\ti c_{2,n} \rg)=\de_i^2h_n(\xi_i+\de_i y), 
\end{split}
\end{equation}
where for simplicity we denote $\ti c_{j,n}=\ti c_{j}(\phi_n)$, with $\ti c_j$ given by \eqref{ctjphi}. Furthermore, it follows that $\Phi_{i,n}\to\Phi_i^*$ weakly in $H^1_0(\Omega_{i,n})$ and strongly in $L^p(K)$ for any $K$   compact sets in $\mathbb R^2$.
Now, let $\chi$ a smooth function with compact support in $\mathbb R^2.$ We multiply \eqref{eqPhi1} by $\chi$ and we get 
\begin{equation*}
\begin{split}
-\int_{\Omega_{i,n} }&\nabla \Phi_{i,n}\nabla \chi+\int_{\Omega_{i,n} } \de_i^2K_1(\xi_i+\de_i y)  \Phi_{i,n}\chi+\ti c_{1,n}\int_{\Omega_{i,n} }\de_i^2K_1(\xi_i+\de_i y)\chi\\
&+\int_{\Om_{i,n}} \de_i^2K_2(\xi_i+\de_i y)\Phi_{i,n}\chi  +\ti c_{2,n} \int_{\Om_{i,n}} \de_i^2K_2(\xi_i+\de_i y)\chi
=\,\int_{\Omega_{i,n} }\de_i^2h_n(\xi_i+\de_i y) \chi .
\end{split}
\end{equation*}
Hence, we obtain that for $j=1,2$
\begin{equation}\label{dik12}
\de_i^2K_j(\xi_i+\de_i y)=
\begin{cases}
(2-j)\ds{2\al_i^2|y|^{\al_i-2}\over (1+|y|^{\al_i})^2}+O(\de_i^2\e)&\text{ if }i=1,\dots,m_1\\[0.3cm]
(j-1)\ds{2\al_i^2|y|^{\al_i-2}\over (1+|y|^{\al_i})^2} + O(\de_i^2\e)&\text{ if }i=m_1+1,\dots,m
\end{cases}
\end{equation}
uniformly on compact subsets of $\R^2$. Thus, we get that
\begin{equation*}
\begin{split}
-\int_{\Omega_{i,n} }&\nabla \Phi_{i,n}\nabla \chi+\int_{\Omega_{i,n} } \bigg[\ds{2\al_i^2|y|^{\al_i-2}\over (1+|y|^{\al_i})^2}+O(\de_i^2\e)\bigg]\Phi_{i,n}\chi+\ti c_{1,n} \int_{\Omega_{i,n} }\bigg[\ds{2\al_i^2|y|^{\al_i-2}\over (1+|y|^{\al_i})^2}+O(\de_i^2\e)\bigg]\chi\\
&+\int_{\Om_{i,n}} O(\de_i^2\e) \Phi_{i,n}\chi  + \ti c_{2,n} \int_{\Om_{i,n}} O(\de_i^2\e) \chi = \int_{\Omega_{i,n} }\de_i^2h_n(\xi_i+\de_i y) \chi 
\end{split}
\end{equation*}
for $i=1,\dots,m_1$ and 
\begin{equation*}
\begin{split}
-\int_{\Omega_{i,n} }&\nabla \Phi_{i,n}\nabla \chi +\int_{\Om_{i,n}} O(\de_i^2\e) \Phi_{i,n}\chi +\ti c_{1,n}  \int_{\Om_{i,n}} O(\de_i^2\e) \chi +\int_{\Omega_{i,n} } \bigg[\ds{2\al_i^2|y|^{\al_i-2}\over (1+|y|^{\al_i})^2}+O(\de_i^2\e)\bigg]\Phi_{i,n}\chi \\
& + \ti c_{2,n} \int_{\Omega_{i,n} }\bigg[\ds{2\al_i^2|y|^{\al_i-2}\over (1+|y|^{\al_i})^2}+O(\de_i^2\e)\bigg]\chi  
= \int_{\Omega_{i,n} }\de_i^2h_n(\xi_i+\de_i y) \chi 
\end{split}
\end{equation*}
for $i=m_1+1,\dots,m$. We re-write the system for $\ti c_{1,n}$ and $\ti c_{2,n}$ as a diagonal dominant one as $n\to+\infty$
\begin{equation*}
\begin{split}
\ti c_{1,n} \int_{\Omega_{i,n} }\bigg[{2\al_i^2|y|^{\al_i-2}\over (1+|y|^{\al_i})^2} +O(\de_i^2\e)\bigg] \chi+o(1)\ti c_{2,n} =&\,O(1)\\[0.4cm]
o(1)\ \ti c_{1,n}+\ti c_{2,n}\int_{\Omega_{j,n} } \bigg[{2\al_j^2|y|^{\al_j-2}\over (1+|y|^{\al_j})^2} +O(\de_j^2\e)\bigg]  \chi=&\, O(1),
\end{split}
\end{equation*}
choosing $i\in\{1,\dots,m_1\}$ and $j\in\{m_1+1,\dots,m\}$. Thus, if we choose $\chi$ so that $\ds \int_{\mathbb R^2}{2\al_k^2|y|^{\al_k-2}\over (1+|y|^{\al_k})^2}  \chi\ dy\not=0$ for $k=i,j$ then we obtain that $ \ti c_{i,n}=O(1)$,  for $i=1,2$. Now, we multiply \eqref{eqPhi1} by $\Phi_{i,n}$ for any $i\in\{1,\dots,m\}$ and we get 
\begin{equation*}
\begin{split}
-\int_{\Omega_{i,n} }&|\nabla \Phi_{i,n}|^2 + \int_{\Omega_{i,n} } \de_i^2K_1(\xi_i+\de_i y)  \Phi_{i,n}^2+\ti c_{1,n}\int_{\Omega_{i,n} }\de_i^2K_1(\xi_i+\de_i y)\Phi_{i,n}\\
&+\int_{\Om_{i,n}} \de_i^2K_2(\xi_i+\de_i y)\Phi_{i,n}^2  +\ti c_{2,n} \int_{\Om_{i,n}} \de_i^2K_2(\xi_i+\de_i y)\Phi_{i,n}
=\,\int_{\Omega_{i,n} }\de_i^2h_n(\xi_i+\de_i y) \Phi_{i,n}.
\end{split}
\end{equation*}
Hence, we deduce that
\begin{equation}\label{snpsi}
\sum_{i=1}^{m} 2\al_i^2\| \Phi_{i,n}\|_{L_{\al_i}}^2
=\,1 + \la_1 (\ti c_{1,n})^2 + \la_2\tau^2 (\ti c_{2,n})^2 +o(1).
\end{equation}
Therefore, the sequence $\{\Phi_{i,n}\}_n$ is bounded in $H_{\al_i}(\R^2)$, so that there is a subsequence $\{\Phi_{i,n}\}_n$ and functions $\Phi_i^*$, $i=1,2$ such that $\{\Phi_{i,n}\}_n$ converges to $\Phi_i^*$ weakly in $H_{\al_i}(\R^2)$ and strongly in $L_{\al_i}(\R^2)$. That proves our claim.
\end{proof}

Define the sequences $\psi_{i,n}=\phi_n+\ti c_{i,n}$, $i=1,2$. Notice that clearly
\begin{equation}\label{eqpsi}
\lap \psi_{i,n}+K_1 \psi_{1,n}+K_2\psi_{2,n}=h_n\quad\text{in}\ \ \oen,\qquad i=1,2.
\end{equation}
Now, define $\Psi_{i,j,n}(y):=\psi_{i,n}(\xi_j+\de_j y)$ for $y\in \Om_{j,n}$, $i=1,2$ and $j=1,\dots,m$. Note that $\Psi_{i,j,n}=\Phi_{j,n}+\ti c_{i,n}$. Thus, we can prove the following fact.

\begin{claim}\label{claim2} $\Psi_{1,j,n} \to a_jY_{0j}$ for $j=1,\dots,m_1$  and $\Psi_{2,j,n} \to a_jY_{0j}$ for $j=m_1+1,\dots,m$, weakly in $H_{\al_j}(\R^2)$ and strongly in $L_{\al_j}(\R^2)$ as $n\to+\infty$ for some constant $a_{j}\in\R$, $j=1,\dots,m$.
\end{claim}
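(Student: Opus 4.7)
The plan is to rescale \eqref{eqpsi} around each point $\xi_j$, pass to the limit in the weak formulation, and invoke the classification of finite-energy solutions of $L_j(\phi)=0$ on $\R^2$ quoted just before the statement.

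The proof of Claim~1 gives $\ti c_{i,n}=O(1)$, so along a subsequence $\ti c_{i,n}\to \ti c_{i,\infty}\in\R$ for $i=1,2$. Combined with $\Phi_{j,n}\to\Phi_j^*$, the identity $\Psi_{i,j,n}=\Phi_{j,n}+\ti c_{i,n}$ gives $\Psi_{i,j,n}\to\Psi_{i,j}^*:=\Phi_j^*+\ti c_{i,\infty}$ weakly in $H_{\al_j}(\R^2)$ and strongly in $L_{\al_j}(\R^2)$; constants lie in $H_{\al_j}(\R^2)$ since the weight $|y|^{\al_j-2}(1+|y|^{\al_j})^{-2}$ is integrable on $\R^2$. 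Rescaling \eqref{eqpsi} by $x=\xi_j+\de_j y$ then produces
\begin{equation*}
\Delta\Psi_{i,j,n}+\de_j^2 K_1(\xi_j+\de_j y)\,\Psi_{1,j,n}+\de_j^2 K_2(\xi_j+\de_j y)\,\Psi_{2,j,n}=\de_j^2 h_n(\xi_j+\de_j y)\quad\text{in }\Om_{j,n}.
\end{equation*}

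Now test against an arbitrary $\chi\in C_c^\infty(\R^2)$, supported in $\Om_{j,n}$ for $n$ large. By \eqref{dik12}, uniformly on $\supp\chi$ one has $\de_j^2 K_{\iota_j}(\xi_j+\de_j y)\to \frac{2\al_j^2|y|^{\al_j-2}}{(1+|y|^{\al_j})^2}$ and $\de_j^2 K_{3-\iota_j}(\xi_j+\de_j y)=O(\de_j^2\ep_n)$, where $\iota_j:=1$ if $j\le m_1$ and $\iota_j:=2$ otherwise. Reversing the change of variables, the right-hand side contributes at most $O(\de_j^{2/p'}\|h_n\|_p)=o(1)$ thanks to $\de_j\to 0$ and $|\log\ep_n|\,\|h_n\|_p=o(1)$. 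Combining weak $H^1$ convergence of $\Psi_{i,j,n}$ with strong $L^2_{\mathrm{loc}}$ convergence of $\Psi_{1,j,n}$ and $\Psi_{2,j,n}$ (Rellich's theorem), the equation passes to the limit and yields, for every $i\in\{1,2\}$,
\begin{equation*}
\Delta\Psi_{i,j}^*+\frac{2\al_j^2|y|^{\al_j-2}}{(1+|y|^{\al_j})^2}\,\Psi_{\iota_j,j}^*=0\quad\text{in }\R^2.
\end{equation*}

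Choosing $i=\iota_j$ reduces the identity above to $L_j(\Psi_{\iota_j,j}^*)=0$, and $\Psi_{\iota_j,j}^*\in H_{\al_j}(\R^2)$: its Dirichlet energy is controlled by $\|\phi_n\|=1$ via weak lower semicontinuity, and its $L_{\al_j}$-norm is controlled by \eqref{snpsi}. The classification recalled just before the claim (cf.\ \cite[Theorem~A.1]{op}) therefore forces $\Psi_{\iota_j,j}^*=a_j Y_{0j}$ for some $a_j\in\R$, which is precisely the stated conclusion. The main subtlety lies in the ``cross'' term $\de_j^2 K_{3-\iota_j}\,\Psi_{3-\iota_j,j,n}$: even though $\Psi_{3-\iota_j,j,n}$ need not be small (it carries the possibly nontrivial constant $\ti c_{3-\iota_j,n}$), the $O(\de_j^2\ep_n)$ smallness of the coefficient on compact subsets of $\R^2$ suppresses its contribution when paired against a compactly supported test function. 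Once this observation is in place, the remaining limit passage is routine.
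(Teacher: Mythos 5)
Your proposal follows essentially the same route as the paper: rescale \eqref{eqpsi}, use the boundedness of $\ti c_{i,n}$, pass to the limit with the help of \eqref{dik12}, and invoke the classification of finite-energy kernel elements of $L_j$; your observation about why the cross term $\de_j^2 K_{3-\iota_j}\Psi_{3-\iota_j,j,n}$ disappears is a detail the paper leaves implicit but is exactly right. One small gap: a test function $\chi\in C_c^\infty(\R^2)$ with $0\in\supp\chi$ is \emph{never} contained in $\Om_{j,n}$, because the rescaled domain always excludes a punctured disk of radius $\e_j/\de_j>0$ around the origin (it only shrinks to $0$ as $n\to\infty$). Thus the limit passage as written only produces $L_j(\Psi_{\iota_j,j}^*)=0$ in $\R^2\setminus\{0\}$; one must then invoke a removable-singularity argument (as the paper does by noting that the finite-Dirichlet-energy solution "extends to a solution in the whole $\R^2$") before the classification theorem applies.
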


\begin{proof}[\dem]
From the previous computations, it is clear that in $\Om_{j,n}$
$$\lap \Psi_{1,j,n}+\de_j^2K_1(\xi_j+\de_j y ) \Psi_{1,j,n}+\de_j^2K_2(\xi_j+\de_j y ) \lf(\Psi_{1,j,n}-\ti c_{1,n}+\ti c_{2,n}\rg)=\de_j^2h_n(\xi_j+\de_j y)$$
and
$$\lap \Psi_{2,j,n}+\de_j^2K_1(\xi_j+\de_j y ) \lf(\Psi_{2,n}-\ti c_{2,n}+\ti c_{1,n}\rg) + \de_j^2K_2(\xi_j+\de_j y )  \Psi_{1,j,n}=\de_j^2h_n(\xi_j+\de_j y).$$
Furthermore, $\{\ti c_{i,n}\}$ is a bounded sequence in $\R$, so it follows that $\{\Psi_{i,j,n}\}_n$ is bounded in $H_{\al_j}(\R^2)$ for $i=1,2$ and $j=1,\dots,m$. Also, we have that
$$\int_{\Om_{j,n}} (\de_j^2|h_n(\xi_j+\de_j y)|)^p\, dy=\de_j^{2p-2}\int_{\Om_{\e_n} }|h_n(x)|^p\, dx=\de_i^{2p-2}\|h_n\|_p^p=o(1).$$
Therefore, taking into account \eqref{dik12} we deduce that $\Psi_{i,j,n}\to\Psi_j^*$ as $n\to+\infty$ with $i=1$ if $j=1,\dots,m_1$ and $i=2$ if $j=m_1+1,\dots,m$, where $\Psi^*_j$ is a solution to 
$$\lap\Psi+{2\al_j^2|y|^{\al_j-2}\over(1+|y|^{\al_j})^2}\Psi=0,\qquad j=1,\dots,m,\qquad  \text{in $\R^2\sm\{0\}$}.$$
It is standard that $\Psi_j^*$, $j=1,\dots,m$, extends to a solution in the whole $\R^2$. Hence, by using symmetry assumptions if necessary, we get that $\Psi^*_j=a_jY_{0j}$ for some constant $a_{j}\in\R$, $j=1,\dots,m$.
\end{proof}

For the next step we construct some suitable test functions. To this aim, introduce the coefficients $\gamma_{ij}$'s and $\ti\gamma_{ij}$'s, $i,j=1,\dots,m$, as the solution of the linear systems
\begin{equation}\label{gamaij}
\gamma_{ij}\lf[-{1\over 2\pi}\log\e_i+H(\xi_i,\xi_i)\rg]+\sum_{k=1,k\ne i}^m \gamma_{kj} G(\xi_k,\xi_i)=
\begin{cases}
2& \text{if } i=j\\
0& \text{if } i\ne j
\end{cases}
\end{equation}
and
\begin{equation}\label{gamatij}
\ti\gamma_{ij}\lf[-{1\over 2\pi}\log\e_i+H(\xi_i,\xi_i)\rg]+\sum_{k=1,k\ne i}^m \ti \gamma_{kj} G(\xi_k,\xi_i)=\begin{cases}
\ds {4\over 3} \al_j\log\de_j +{8\over 3} + {8\pi\over 3}\al_j H(\xi_j,\xi_j)& \text{if } i=j\\[0.3cm]
\ds {8\pi\over 3}\al_j G(\xi_i,\xi_j),& \text{if } i\ne j,
\end{cases}
\end{equation}
respectively. Notice that both systems \eqref{gamaij} and \eqref{gamatij} are diagonally dominant, system \eqref{gamaij} has solutions
$$\gamma_{ij}=\begin{cases}
\ds -{4\pi\over \log\e_j}+O\Big({1\over |\log\e|^2}\Big)=-{2\pi(\al_j-2)\over \log \e} +O\Big({1\over |\log\e|^2}\Big)&\text{for } i=j\\[0.3cm]
\ds O\Big({1\over |\log\e|^2}\Big)&\text{for }\ i\ne j
\end{cases}$$
and for the system \eqref{gamatij} we get
$$\ti\gamma_{ij}=\begin{cases}
\ds -{8\pi\over 3}{\al_j\log\de_j\over\log\e_j}+O\Big({1\over |\log\e|}\Big)=-{4\pi\over 3}(\al_j-2)+O\Big({1\over |\log\e|}\Big)&\text{for } i=j\\[0.3cm]
\ds O\Big({1\over |\log\e|}\Big)&\text{for }\ i\ne j
\end{cases}.$$
Here, we have used \eqref{choice1}. Consider now for any $j\in\{1,\dots, m\}$ the functions $\ds \eta_{0j}(x)= - \dfrac{2\de_j^{\al_j} }{\de_j^{\al_j}+|x-\xi_j|^{\al_j}}$
and
$$\eta_j(x)={4\over 3}\log(\de_j^{\al_j}+|x-\xi_j|^{\al_j}){\de_j^{\al_j}-|x-\xi_j|^{\al_j}\over \de_j^{\al_j}+|x-\xi_j|^{\al_j} }+ {8\over 3}{\de_j^{\al_j} \over \de_j^{\al_j}+|x-\xi_j|^{\al_j} },$$
 so that 
 $$\lap \eta_{0j} + |x-\xi_j|^{\al_j-2}e^{U_j} \eta_{0j}=-|x-\xi_j|^{\al_j}e^{U_j}\quad\text{ and }\quad\lap\eta_j+|x - \xi_j|^{\al_j-2}e^{U_j}\eta_j=|x-\xi_j|^{\al_j-2}e^{U_j}Z_{0j},$$
where $Z_{0j}(x)=Y_{0j}(\de_j^{-1} [x-\xi_j])=\dfrac{\de_j^{\al_j}-|x-\xi_j|^{\al_j}}{\de_j^{\al_j}+|x-\xi_j|^{\al_j}}$. Notice that $\eta_{0j} + 1= -Z_{0j}$ and, by similar arguments as to obtain expansion \eqref{pui}, we have that the following fact.

\begin{lemma}
There hold
$$P_\e \eta_{0j}=\eta_{0j} + \sum_{i=1}^m \gamma_{ij} G(x,\xi_i)+O(\e^{\ti\sigma})\quad\text{ and }\quad P_\e \eta_{j}=\eta_{j}+{8\pi\over 3}\al_j H(x,\xi_j)-\sum_{i=1}^m \ti\gamma_{ij} G(x,\xi_i)+O(\e^{\ti\sigma})$$ 
uniformly in $\Om_\e$ for some $\ti\sigma>0$.
\end{lemma}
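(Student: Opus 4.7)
The plan is to mimic the proof of the expansion \eqref{pui} in Lemma~\ref{ewfxi}: in each case I would construct a harmonic error function on $\Om_\e$, estimate it on every boundary component of $\fr\Om_\e$, and close by the maximum principle. The linear systems \eqref{gamaij} and \eqref{gamatij} are designed precisely so that the leading boundary data of those error functions cancel.

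For the first identity I would set
$$\psi_0 \,:=\, P_\e\eta_{0j}-\eta_{0j}-\sum_{i=1}^m\gamma_{ij}\,G(\cdot,\xi_i),$$
which is harmonic in $\Om_\e$ since each $G(\cdot,\xi_i)$ is harmonic away from $\xi_i\notin\Om_\e$. On $\fr\Om$ one has $P_\e\eta_{0j}=0=G(\cdot,\xi_i)$, and $\eta_{0j}=O(\de_j^{\al_j})=O(\e)$ from $|x-\xi_j|\gtrsim 1$. On $\fr B(\xi_k,\e_k)$ with $k\ne j$ the same distance bound gives $\eta_{0j}=O(\e)$, while Taylor-expanding each $G(\cdot,\xi_i)$ around $\xi_k$ shows that the $i=k$ row of \eqref{gamaij} (with right-hand side $0$) cancels the leading part of $\sum_i\gamma_{ij}G(\cdot,\xi_i)$. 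On $\fr B(\xi_j,\e_j)$ the value $\eta_{0j}=-2+O((\e_j/\de_j)^{\al_j})$ is matched by the $i=j$ row of \eqref{gamaij}, whose right-hand side is $2$. The maximum principle then gives $\psi_0=O(\e^{\ti\sigma})$ uniformly in $\Om_\e$.

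For the second identity I would set
$$\psi_1 \,:=\, P_\e\eta_j-\eta_j-\tfrac{8\pi}{3}\al_j\,H(\cdot,\xi_j)+\sum_{i=1}^m\ti\gamma_{ij}\,G(\cdot,\xi_i),$$
again harmonic in $\Om_\e$. On $\fr\Om$, from $H(\cdot,\xi_j)=\tfrac{1}{2\pi}\log|\cdot-\xi_j|$ and $\eta_j=-\tfrac{4}{3}\al_j\log|\cdot-\xi_j|+O(\e)$ (using $\de_j^{\al_j}\ll|\cdot-\xi_j|^{\al_j}$) the logarithmic terms match exactly. On $\fr B(\xi_j,\e_j)$ the expansion $\eta_j=\tfrac{4}{3}\al_j\log\de_j+\tfrac{8}{3}+O((\e_j/\de_j)^{\al_j})$, combined with $H(\cdot,\xi_j)=H(\xi_j,\xi_j)+O(\e_j)$, is exactly what the $i=j$ row of \eqref{gamatij}, whose right-hand side is $\tfrac{4}{3}\al_j\log\de_j+\tfrac{8}{3}+\tfrac{8\pi}{3}\al_j H(\xi_j,\xi_j)$, absorbs. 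On $\fr B(\xi_k,\e_k)$ with $k\ne j$, the leading contributions $-\tfrac{4}{3}\al_j\log|\xi_k-\xi_j|$ from $\eta_j$ and $\tfrac{8\pi}{3}\al_j H(\xi_k,\xi_j)$ from $\tfrac{8\pi}{3}\al_j H(\cdot,\xi_j)$ are precisely those produced by the right-hand side $\tfrac{8\pi}{3}\al_j G(\xi_k,\xi_j)$ of the corresponding equation in \eqref{gamatij}. The maximum principle again yields $\psi_1=O(\e^{\ti\sigma})$.

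The only delicate point I anticipate is the bookkeeping of the various error exponents: near $\xi_j$ one collects terms of order $(\e_j/\de_j)^{\al_j}$, near each other $\xi_k$ terms of order $\e_k$, and elsewhere terms of order $\e$. Thanks to \eqref{choice1} every one of these is a positive power of $\e$, so $\ti\sigma>0$ can be taken as the minimum of the corresponding exponents. No further obstacle arises, because the defining systems \eqref{gamaij}--\eqref{gamatij} were built exactly to force the main-order boundary data to cancel in each regime.
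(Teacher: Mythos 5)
Your proposal is correct and follows essentially the same route as the paper: both introduce the harmonic error functions $f = P_\e\eta_{0j}-\eta_{0j}-\sum_i\gamma_{ij}G(\cdot,\xi_i)$ and $\tilde f = P_\e\eta_j-\eta_j-\tfrac{8\pi}{3}\al_j H(\cdot,\xi_j)+\sum_i\ti\gamma_{ij}G(\cdot,\xi_i)$, estimate them on $\fr\Om$ and on each $\fr B(\xi_k,\e_k)$ via the rows of \eqref{gamaij}--\eqref{gamatij}, and close with the maximum principle. The only minor imprecision is that the boundary error on $\fr B(\xi_j,\e_j)$ for $\eta_j$ actually carries an extra $|\log\de_j|$ factor (as the paper records), but this is still $O(\e^{\ti\sigma})$ under \eqref{choice1}.
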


\begin{proof}[\dem] On one hand, the harmonic function $\ds f(x)=P_\e \eta_{0j}(x)-\eta_{0j}(x)-\sum_{i=1}^m\gamma_{ij}G(x,\xi_i)$ satisfies \linebreak $\ds f(x)={2\de_j^{\al_j}\over \de_j^{\al_j}+|x-\xi_j|^{\al_j}}=O(\de_j^{\al_j})$ on $\fr\Om$ and
\begin{equation*}
\begin{split}
f(x)&={2\de_j^{\al_j}\over \de_j^{\al_j}+\e^{\al_j}} - \gamma_{jj}\lf[- {1\over 2\pi}\log \e_j+H(\xi_j,\xi_j)+O(\e_j)\rg] - \sum_{i=1,i\ne j}^m\gamma_{ij}\lf[G(\xi_j,\xi_i)+O(\e_j)\rg]\\
&=O\lf(\Big({\e_j\over \de_j}\Big)^{\al_j}\rg)+\gamma_{jj}O(\e_j)+\sum_{i=1,i\ne j}^m\gamma_{ij}O(\e_j)
\end{split}
\end{equation*}
on $ \fr B(\xi_j,\e_j)$ by using the first equation in \eqref{gamaij} and  
\begin{equation*}
\begin{split}
f(x)&={2\de_j^{\al_j}\over \de_j^{\al_j}+|x-\xi_i|^{\al_j}} - \gamma_{ij}\lf[- {1\over 2\pi}\log \e_i+H(\xi_i,\xi_i)+O(\e_i)\rg] - \sum_{k=1,k\ne i}^m\gamma_{kj}\lf[G(\xi_i,\xi_k)+O(\e_i)\rg]\\
&=O\lf(\de_j^{\al_j}\rg)+\gamma_{ij}O(\e_i)+\sum_{k=1,k\ne i}^m\gamma_{kj}O(\e_i)
\end{split}
\end{equation*}
on $\fr B(\xi_i,\e_i)$ for $i\ne j$ by using the second equation in \eqref{gamaij}. Therefore, by the maximum principle we deduce the expansion of $P_\e \eta_{0j}$.

On the other hand, similarly as above the harmonic function 
$$\ds \ti f(x)=P_\e \eta_{j}(x)-\eta_{j}(x)-{8\pi\over 3}\al_jH(x,\xi_j)+\sum_{i=1}^m\ti\gamma_{ij}G(x,\xi_i)$$
satisfies 
$$ \ti f(x) =-{4\over 3}\log(\de_j^{\al_j}+|x-\xi_j|^{\al_j}) \bigg[-1+  {2\de_j^{\al_j} \over \de_j^{\al_j}+|x-\xi_j|^{\al_j} }  \bigg]   - {4\over 3}\al_j\log|x-\xi_j| + O(\de_j^{\al_j}) =O(\de_j^{\al_j})\ \text{ on $\fr\Om$}$$
and
\begin{equation*}
\begin{split}
\ti f(x)
&=-{4\over 3}\bigg[\log \de_j^{\al_j}+  \log\(1+\Big({\e_j\over \de_j}\Big)^{\al_j}\) \bigg] \cdot  {1 -\Big(\dfrac{\e_j}{ \de_j}\Big)^{\al_j}\over 1+\Big(\dfrac{\e_j}{ \de_j}\Big)^{\al_j} }  - {8\over 3}\cdot  {1\over 1+\Big(\dfrac{\e_j}{ \de_j}\Big)^{\al_j} }  -{8\pi\over 3}\al_j H(\xi_j,\xi_j)\\
&\ \quad+ \ti \gamma_{jj}\lf[- {1\over 2\pi}\log \e_j+H(\xi_j,\xi_j)\rg]+\sum_{i=1,i\ne j}^m\ti\gamma_{ij}G(\xi_j,\xi_i) 
+ \ti \gamma_{jj} O(\e_j) +\sum_{i=1,i\ne j}^m\ti\gamma_{ij} O(\e_j) \\
&=O\lf(\Big({\e_j\over \de_j}\Big)^{\al_j}\rg) + O\lf(\Big({\e_j\over \de_j}\Big)^{\al_j}|\log\de_j| \rg)+ O(\e_j)+\ti\gamma_{jj}O(\e_j)+\sum_{i=1,i\ne j}^m\ti \gamma_{ij}O(\e_j)
\end{split}
\end{equation*}
on $ \fr B(\xi_j,\e_j)$, by using the first equation \eqref{gamatij} and
\begin{equation*}
\begin{split}
\ti f(x) 
&=\bigg[-{4\over 3}\al_j\log|x-\xi_j| -  {4\over 3} \log\(1+{\de_j^{\al_j}\over |x-\xi_j|^{\al_j} }\)  \bigg] \bigg[-1+  {2\de_j^{\al_j} \over \de_j^{\al_j}+|x-\xi_j|^{\al_j} }  \bigg]   - {8\over 3} {\de_j^{\al_j} \over \de_j^{\al_j}+|x-\xi_j|^{\al_j} }	\\
&\quad - {8\pi\over 3}\al_jH(x,\xi_j)+  \ti \gamma_{ij}\lf[- {1\over 2\pi}\log \e_i+H(\xi_i,\xi_i)\rg] +\sum_{k=1,k\ne i}^m\ti\gamma_{kj} G(\xi_i,\xi_k)  +\ti \gamma_{ij} O(\e_i)  +\sum_{k=1,k\ne i}^m\ti\gamma_{kj} O(\e_i) \\
&=O\lf(\de_j^{\al_j}\rg)+\gamma_{ij}O(\e_i)+\sum_{k=1,k\ne i}^m\gamma_{kj}O(\e_i)
\end{split}
\end{equation*}
on  $ \fr B(\xi_i,\e_i)$ for $i\ne j$ by using the second equation \eqref{gamatij}. Therefore, by the maximum principle we deduce the expansion of $P_\e \eta_{j}$.
\end{proof}

Denote $\ti c_i=\displaystyle \lim_{n\to+\infty} \ti c_{i,n}$ for $i=1,2$, up to a subsequence if necessary. Hence, we get that
\begin{equation}\label{cpsi}
\Phi_{j,n} \to a_jY_{0j}-\ti c_1, \ \text{ for } j=1,\dots, m_1,\quad\text{and}\quad \Phi_{j,n} \to a_jY_{0j}-\ti c_2, \  \text{ for } j=m_1+1,\dots,m,
\end{equation}
weakly in $H_{\al_j}(\R^2)$ and strongly in $L_{\al_j}(\R^2)$, since $\Phi_{j,n}=\Psi_{i,j,n}-\ti c_{i,n}$.

\begin{claim}\label{claim3}
There hold that $(\al_j-1)a_{j} + 2\ti c_i=0$ either for $i=1$ and all $j=1,\dots,m_1$ or for $i=2$ and all $j=m_1+1,\dots, m$.
\end{claim}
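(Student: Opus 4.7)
We prove the identity for $i=1$ and every $j\in\{1,\dots,m_1\}$; the case $i=2$, $j\in\{m_1+1,\dots,m\}$ is identical after swapping the roles of $K_1$ and $K_2$. The strategy is a Pohozaev-type identity at the blow-up point $\xi_j$, obtained by testing \eqref{eqpsi} against the projected corrector $P_\e\eta_j$ constructed in the previous lemma; the role of $\eta_j$ is precisely to invert the singular Liouville linearization against its bounded kernel element $Z_{0j}$, via $\lap\eta_j+|x-\xi_j|^{\al_j-2}e^{U_j}\eta_j=|x-\xi_j|^{\al_j-2}e^{U_j}Z_{0j}$.

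Multiply $\lap\psi_{1,n}+K_1\psi_{1,n}+K_2\psi_{2,n}=h_n$ by $P_\e\eta_j$ and integrate over $\oen$. Two integrations by parts -- using that $\phi_n=\psi_{1,n}-\ti c_{1,n}$ and $P_\e\eta_j$ both vanish on $\fr\oen$ -- turn $\int(\lap\psi_{1,n})P_\e\eta_j\,dx$ into $\int\phi_n\lap\eta_j\,dx=\int|x-\xi_j|^{\al_j-2}e^{U_j}\phi_n(Z_{0j}-\eta_j)\,dx$. The remaining terms $\int K_1\psi_{1,n}P_\e\eta_j$ and $\int K_2\psi_{2,n}P_\e\eta_j$ are split along the partition $\oen=\bigcup_k A_k\cup(\oen\setminus\bigcup_k A_k)$: by \eqref{dik12}, on each $A_k$ the density $K_\ell\psi_{\ell,n}$ concentrates after rescaling $x=\xi_k+\de_ky$ as $\frac{2\al_k^2|y|^{\al_k-2}}{(1+|y|^{\al_k})^2}\Psi_{i_k,k,n}$ (with $i_k=1$ if $k\le m_1$ and $i_k=2$ otherwise), while on the far-field region $K_1+K_2=O(\e)$ contributes $O(\e)$. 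The right-hand side $\int h_nP_\e\eta_j\,dx=o(1)$ by H\"older, using the hypothesis $|\log\e|\,\|h_n\|_p=o(1)$ and $\|P_\e\eta_j\|_{p'}=O(1)$.

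Passing to the limit by Claim \ref{claim2} -- so $\Psi_{1,j,n}\to a_jY_{0j}$ strongly in $L_{\al_j}(\R^2)$ and $\Phi_{j,n}\to a_jY_{0j}-\ti c_1$ -- collapses the $A_j$-contribution to an explicit $\R^2$-integral. The apparently divergent $\log\de_j$ pieces that arise from the $\frac{4\al_j}{3}\log\de_j\,Y_{0j}$-coefficient of $\eta_j$ at the rescaled level and from the $-\ti\ga_{jj}G(x,\xi_j)$ term in the expansion $P_\e\eta_j=\eta_j+\frac{8\pi}{3}\al_jH(x,\xi_j)-\sum_i\ti\ga_{ij}G(x,\xi_i)+O(\e^{\ti\sigma})$ cancel against each other, thanks to the matching prescribed by \eqref{gamatij} together with the radial identity $\int_{\R^2}\frac{2\al_j^2|y|^{\al_j-2}}{(1+|y|^{\al_j})^2}Y_{0j}\,dy=0$. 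Explicit evaluation of the remaining radial Liouville integrals then produces the desired relation $(\al_j-1)a_j+2\ti c_1=0$. The main technical obstacle is to control the cross contributions from annuli $A_k$ with $k\ne j$: there the expansion of $P_\e\eta_j$ reduces to Green-function terms with coefficients $\ti\ga_{kj}=O(|\log\e|^{-1})$ which, combined with the uniform boundedness of $\{\Phi_{k,n}\}_n$ in $H_{\al_k}(\R^2)$ established above, yield only $O(|\log\e|^{-1})$ contributions that vanish in the limit.
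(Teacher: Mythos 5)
Your strategy of testing \eqref{eqpsi} against a projected element of the approximate kernel and extracting an identity relating $a_j$ and $\ti c_1$ is the right one, and your treatment of $\int\lap\psi_{1,n}P_\e\eta_j$ and of the annuli $A_k$, $k\ne j$, matches the paper. However, you test against $P_\e\eta_j$ alone, whereas the paper tests against $P_\e Z_j$ with $Z_j=\eta_j+\gamma_j^*\eta_{0j}$, and this difference is not cosmetic: your argument has a gap precisely where the $\gamma_j^*\eta_{0j}$ correction is needed.

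After rescaling $x=\xi_j+\de_j y$ on $A_j$, the expansion $P_\e\eta_j-\eta_j=\frac{8\pi}{3}\al_jH(x,\xi_j)-\sum_i\ti\ga_{ij}G(x,\xi_i)+O(\e^{\ti\sigma})$ produces a \emph{constant} piece
\begin{equation*}
C_j:=\Big(\tfrac{8\pi}{3}\al_j-\ti\ga_{jj}\Big)H(\xi_j,\xi_j)-\sum_{i\ne j}\ti\ga_{ij}G(\xi_j,\xi_i)+\tfrac{\ti\ga_{jj}}{2\pi}\log\de_j,
\end{equation*}
which diverges like $\log\e$ because $\ti\ga_{jj}\to-\tfrac{4\pi}{3}(\al_j-2)\ne0$. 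When integrated against $\frac{2\al_j^2|y|^{\al_j-2}}{(1+|y|^{\al_j})^2}\Psi_{1,j,n}$, it produces $C_j\int_{\Om_{j,n}}\frac{2\al_j^2|y|^{\al_j-2}}{(1+|y|^{\al_j})^2}\Psi_{1,j,n}\,dy$. This is an $\infty\cdot o(1)$ indeterminate form: the radial identity $\int_{\R^2}\frac{2\al_j^2|y|^{\al_j-2}}{(1+|y|^{\al_j})^2}Y_{0j}=0$ only tells you the \emph{limit} of the second factor is zero, not its rate, and the paper explicitly emphasizes in Claim \ref{claim4} that no rate for the individual integrals $\int_{\Om_{j,n}}\frac{2\al_j^2|y|^{\al_j-2}}{(1+|y|^{\al_j})^2}\Psi_{i,j,n}$ is available at this stage. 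Your claimed cancellation against the $\frac{4\al_j}{3}\log\de_j\,Y_{0j}$-coefficient from $\eta_j$ is spurious, because the two $\log\de_j$ terms are multiplied by different densities: the $\eta_j$-piece pairs with $Y_{0j}\Psi_{1,j,n}$ while the Green-function piece pairs with $\Psi_{1,j,n}$ alone. The $\eta_j$-based $\log\de_j$ contributions from $\int\phi_n\lap\eta_j$ and $\int K_1\psi_1 P_\e\eta_j$ do cancel against each other (producing $\ti c_{1,n}\log\de_j\int Y_{0j}=O(\de_j^{\al_j}\log\de_j)$), but the $G$-based one survives, uncontrolled.

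This is exactly why the paper introduces $\gamma_j^*$: \eqref{gamajs} is the condition that $P_\e Z_j-Z_j-\gamma_j^*$ has \emph{zero} constant part at $\xi_j$ after the $\log\de_j$-expansion, so that only differences $H(x,\xi_j)-H(\xi_j,\xi_j)$, $G(\xi_i,\xi_j)-G(x,\xi_j)$, $\log|x-\xi_j|-\log\de_j$ appear, and the indeterminate form never arises. Moreover, the correction changes the surviving $\log|y|$-coefficient from $\frac{\ti\ga_{jj}}{2\pi}\approx-\frac{2(\al_j-2)}{3}$ to $\frac{\ti\ga_{jj}-\gamma_j^*\gamma_{jj}}{2\pi}\approx-\frac{\al_j(\al_j-2)}{3}$; chasing your calculation with the uncorrected coefficient (after assuming the indeterminate term away) would yield $(2\al_j-3)a_j+2\ti c_1=0$ rather than the claimed $(\al_j-1)a_j+2\ti c_1=0$. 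So the missing $\gamma_j^*\eta_{0j}$ correction is not just a technical convenience — it carries a nonzero contribution to the final identity.
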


\begin{proof}[\dem] To this aim define the following test function $P_\e Z_j$, where $Z_j=\eta_j+\gamma_j^*\eta_{0j}$ and $\gamma_j^*$ is given by
$$ \gamma_j^*=\dfrac{\ds \dfrac{\ti\gamma_{jj}}{2\pi}\log\de_j +\(\dfrac{8\pi}{3}\al_j-\ti\gamma_{jj}\) H(\xi_j,\xi_j) -  \sum_{i=1,i\ne j} ^m \ti\gamma_{ij} G(\xi_i,\xi_j) }{\ds 1-\gamma_{jj}H(\xi_j,\xi_j)- \sum_{i=1,i\ne j} ^m \gamma_{ij} G(\xi_i,\xi_j) + \dfrac{\gamma_{jj}}{2\pi}\log\de_j},$$
so that
\begin{equation}\label{gamajs}
\gamma_j^*= \(\dfrac{8\pi}{3}\al_j-\ti\gamma_{jj}+ \gamma_{jj}\gamma_j^*\) H(\xi_j,\xi_j) -  \sum_{i=1,i\ne j} ^m (\ti\gamma_{ij} - \gamma_j^* \gamma_{ij}) G(\xi_i,\xi_j) +\dfrac{1}{2\pi}(\ti\gamma_{jj}- \gamma_{j}^* \gamma_{jj})\log\de_j .
\end{equation}
Thus,  from the assumption on $h_n$, $|\log \epsilon_n|\ \|h_n\|_*=o(1)$, we get the above relation between $a_j$ and $\ti c_i$ either for $i=1$ and all $j=1,\dots,m_1$ or for $i=2$ and all $j=m_1+1,\dots, m$.
Furthermore, from \eqref{choice1} and the expansions for $\gamma_{jj}$ and $\ti\gamma_{jj}$ we obtain that
$$\gamma_j^*=\dfrac{\ds \dfrac{\ti\gamma_{jj}}{2\pi}\log\de_j+O(1)}{\ds1+\dfrac{ \gamma_{jj}}{2\pi}\log\de_j + O\Big({1\over |\log\e|} \Big)} = - {\al_j-2\over 3}\log \e +O(1).$$
Notice that $P_\e Z_j$ expands as
\begin{align}\label{pzj}
P_\e Z_j 
=&\,Z_{j}+{8\pi\over 3}\al_j H(x,\xi_j)- \ti\gamma_{jj}\(-{1\over 2\pi}\log|x - \xi_j| + H(x,\xi_j)\) -\sum_{i=1,i\ne j}^m \ti\gamma_{ij} G(x,\xi_i) \nonumber\\
&+O(\e^{\ti\sigma}) + \gamma_j^*\lf[\gamma_{jj}\(-{1\over 2\pi}\log|x - \xi_j| + H(x,\xi_j)\) + \sum_{i=1,i\ne j}^m \gamma_{ij} G(x,\xi_i)+O(\e^{\ti\sigma})\rg] \nonumber\\
=&\,Z_{j}+\( {8\pi\over 3} \al_j - \ti\gamma_{jj} + \gamma_{jj}\gamma_j^*\) H(x,\xi_j) -\sum_{i=1,i\ne j}^m \(\ti\gamma_{ij} -  \gamma_{ij}\gamma_j^* \)G(x,\xi_i)\\
& + {1\over 2\pi}\(\ti\gamma_{jj}-\gamma_j^*\gamma_{jj}\) \log|x - \xi_j| +O(\e^{\ti\sigma}) + \gamma_{j}^* O(\e^{\ti\sigma}).\nonumber
\end{align}

\medskip \noindent Assume that $i=1$ for all $j=1,\dots,m_1$ or $i=2$ for all  $j=m_1+ 1,\dots, m $. Multiplying equation \eqref{eqphin} by $P_\e Z_j$ and integrating by parts we obtain that
\begin{equation*}
\begin{split}
\int_{\Om_\e} hP_\e Z_{j}=&\ \int_{\Om_\e}\lap Z_{j} \lf[\psi_i - \ti c_{i,n}\rg] + \int_{\Om_\e} \lf[
K_1\psi_1+ K_2\psi_2\rg]  P_\e Z_{j}  ,
\end{split}
\end{equation*}
in view of $P_\e Z_{j}=0$ and $\psi_i=\ti c_{i,n}$ on $\fr\Om_\e$ and
\begin{equation*}
\begin{split}
\int_{\Om_\e}\lap\psi_i P_\e Z_{j}&=\int_{\Om_\e}\psi_i\lap P_\e Z_{j}-\psi_i\bigg|_{\fr\Om_\e}\int_{\Om_e}\lap PZ_{j}=\int_{\Om_\e} \lap Z_{j} \lf[\psi_i - \ti c_{i,n}\rg].
\end{split}
\end{equation*}
Furthermore, we have that
\begin{equation*}
\begin{split}
\int_{\Om_\e} hPZ_{j} 
=&\,\int_{\Om_\e} \lf[\psi_i - \ti c_{i,n}\rg] \Big[|x-\xi_j|^{\al_j-2}e^{U_j}Z_{0j} - |x-\xi_j|^{\al_j-2} e^{U_j}\eta_j\\
& +\gamma_j^*\( - |x-\xi_j|^{\al_j-2}e^{U_j}  - |x-\xi_j|^{\al_j-2} e^{U_j}\eta_{0j}\) \Big]+\int_{\Om_\e} \lf[
K_1\psi_1+ K_2\psi_2\rg]  P_\e Z_{j} \\
=&\,\int_{\Om_\e} \psi_i |x-\xi_j|^{\al_j-2}e^{U_j}Z_{0j} +\int_{\Om_\e} |x-\xi_j|^{\al_j-2} e^{U_j}\psi_i \( PZ_j-Z_j - \gamma_j^*\) \\
& +\int_{\Om_\e} \(K_1\psi_1+K_2\psi_2 - |x-\xi_j|^{\al_j-2}e^{U_j}\psi_i \)  P_\e Z_{j} -\ti c_{i,n} \int_{\Om_\e} |x-\xi_j|^{\al_j-2} e^{U_j} \( Z_{0j}-\eta_j + \gamma_j^* Z_{0j}\),  
\end{split}
\end{equation*}
in view of 
$$\lap\eta_j + \gamma_j^*\lap\eta_{0j}= |x-\xi_j|^{\al_j-2}e^{U_j}\lf[Z_{0j} - \eta_j  +\gamma_j^*\( - 1  - \eta_{0j}\)\rg] =|x-\xi_j|^{\al_j-2}e^{U_j}\lf[Z_{0j} - \eta_j  +\gamma_j^*Z_{0j}\rg] .$$
Now, estimating every integral term we find that $\ds \int_{\Om_\e} hPZ_{j}=O\lf(\, |\log\e|\,\|h\|_p\rg)=o\lf( 1 \rg)$ for all $j=1,\dots, m$, in view of $PZ_{j}=O(|\log\e|)$ and $G(x,\xi_k)=O(|\log\e_k|)$. Next, by scaling we obtain that either for $i=1$ and all $j=1,\dots,m_1$ or $i=2$ and all $j=m_1+ 1,\dots, m $ it holds
$$\int_{\Om_\e} \psi_i |x-\xi_j|^{\al_j-2}e^{U_j}Z_{0j}=\int_{\Om_{j,n } }  {2\al_j^2|y|^{\al_j-2}\over (1+|y|^{\al_j})^2 }\Psi_{i,j,n}Y_{0j}\, dy =a_j\int_{\R^2 }  {2\al_j^2|y|^{\al_j-2}\over (1+|y|^{\al_j})^2 }Y_{0j}^2 \, dy +o(1).$$
Note that
$$\int_{\R^2} {2\al_j^2|y|^{\al_j-2}\over (1+|y|^{\al_j})^2}Y_{0j}^2=\int_{\R^2} {2\al_j^2|y|^{\al_j-2}\over (1+|y|^{\al_j})^2}\({1-|y|^{\al_j} \over 1+|y|^{\al_j} }\)^2dy={4\pi\over 3}\al_j$$
and 
$$\int_{\R^2} {2\al_j^2|y|^{\al_j-2}\over (1+|y|^{\al_j})^2}Y_{0j}\log|y|=\int_{\R^2} {2\al_j^2|y|^{\al_j-2}\over (1+|y|^{\al_j})^2}\ {1-|y|^{\al_j} \over 1+|y|^{\al_j} }\ \log|y|\, dy= -4\pi .$$
Also, by using \eqref{gamajs}-\eqref{pzj} we get that
\begin{equation*}
\begin{split}
 \int_{\Om_\e} |x-\xi_j|^{\al_j-2} e^{U_j} \psi_i&\, \( PZ_j-Z_j - \gamma_j^*\)= \int_{\Om_\e} |x-\xi_j|^{\al_j-2} e^{U_j} \psi_i\bigg[ PZ_j-Z_j -\( {8\pi\over 3} \al_j - \ti\gamma_{jj} + \gamma_{jj}\gamma_j^*\) H(x,\xi_j)\\
 &\,+ \sum_{i=1,i\ne j}^m \(\ti\gamma_{ij} -  \gamma_{ij}\gamma_j^* \)G(x,\xi_i)- {1\over 2\pi}\(\ti\gamma_{jj}-\gamma_j^*\gamma_{jj}\) \log|x - \xi_i| \bigg]\\
&\,+\int_{\Om_\e}\psi_i | x-\xi_j|^{\al_j-2}e^{U_j}  \( {8\pi\over 3} \al_j - \ti\gamma_{jj} + \gamma_{jj}\gamma_j^*\)  \lf[H(x,\xi_j) - H(\xi_j,\xi_j)\rg]  \\
&\,+\int_{\Om_\e}\psi_i | x-\xi_j|^{\al_j-2}e^{U_j} \sum_{i=1,i\ne j}^m  \(\ti\gamma_{ij} -  \gamma_{ij}\gamma_j^* \)  \lf[G(\xi_i,\xi_j) - G( x ,\xi_j)\rg] \\ 
&\,+ {1\over 2\pi}\(\ti\gamma_{jj}-\gamma_j^*\gamma_{jj}\)\int_{\Om_\e}\psi_i | x-\xi_j|^{\al_j-2}e^{U_j}   \lf[\log|x - \xi_i| - \log\de_j\rg] \\
=&\,  \int_{\Om_\e} |x-\xi_j|^{\al_j-2} e^{U_j} \psi_i \, O(\e^{\ti\sigma}) +\int_{\Om_{j,n } }  {2\al_j^2|y|^{\al_j-2}\over (1+|y|^{\al_j})^2 }\Psi_{i,j,n} O(\de_j|y|)\, dy \\
&\, + {1\over 2\pi} \(\ti\gamma_{jj}-\gamma_j^*\gamma_{jj}\)\int_{\Om_{j,n } }  {2\al_j^2|y|^{\al_j-2}\over (1+|y|^{\al_j})^2 }\Psi_{i,j,n} \log |y| \, dy\\
=&\,-{\al_j(\al_j-2)\over 3} \	 a_j\int_{\R^2 }  {2\al_j^2|y|^{\al_j-2}\over (1+|y|^{\al_j})^2 }Y_{0j} \log |y| \, dy + o(1),
\end{split}
\end{equation*}
in view of
\begin{equation*}
{1\over 2\pi} \(\ti\gamma_{jj}-\gamma_j^*\gamma_{jj}\)=-{\al_j(\al_j-2)\over 3} + O\Big({1\over |\log\e|} \Big).
\end{equation*}
Furthermore, using \eqref{K12} we have that
\begin{equation*}
\begin{split}
\int_{\Om_\e} \(K_i - |x-\xi_j|^{\al_j-2}e^{U_j}\)  P_\e Z_{j} \psi_i=& P_\e Z_j(\xi_j+\de_j y)\, dy\\
 \sum_{l\ne j} \int_{ \Om_{l,n} } {2\al_l^2|y|^{\al_l-2}\over (1+|y|^{\al_l})^2}  \Psi_{i,l,n} P_\e Z_j(\xi_l+\de_l y)\, dy=o(1)
\end{split}
\end{equation*}
since 
for $l\ne j$ and $y\in \de_l^{-1}(B_l-\xi_l)$ it holds
\begin{equation*}
\begin{split}
P_\e Z_j(\xi_l+\de_l y)=&\, Z_{j}(\xi_l+\de_l y) +{8\pi\over 3} \al_j  H(\xi_l+\de_l y,\xi_j)    -\sum_{k=1,k\ne j}^m \(\ti\gamma_{kj} -  \gamma_{kj}\gamma_j^* \)G(\xi_l+\de_l y,\xi_k)\\
& +  {1\over 2\pi}\(\ti\gamma_{jj}-\gamma_j^*\gamma_{jj}\) \log|\xi_l+\de_l y - \xi_j| -\( \ti\gamma_{jj} - \gamma_{jj}\gamma_j^*\) H(\xi_l+\de_l y,\xi_j) +O(\e^{\ti\sigma})\\
=&\, {8\pi\over 3}\al_j G(\xi_l,\xi_j ) +O(\de_j^{\al_j} + \de_l|y|) - \(\ti\gamma_{lj} -  \gamma_{lj}\gamma_j^* \)\(-{1\over 2\pi} \log|\de_ly| + H(\xi_l+\de_l y,\xi_l)\) \\
&-\sum_{k=1,k\ne l}^m \(\ti\gamma_{kj} -  \gamma_{kj}\gamma_j^* \)\( G(\xi_l,\xi_k) + O(\de_l |y|)\) \\ 
=&\, {1\over 2\pi}  \(\ti\gamma_{lj} -  \gamma_{lj}\gamma_j^* \)  \log|\de_ly|  + {8\pi\over 3}\al_j G(\xi_i,\xi_j ) - \(\ti\gamma_{lj} -  \gamma_{lj}\gamma_j^* \) H(\xi_l,\xi_l)  \\
&- \sum_{k=1,k\ne l}^m \(\ti\gamma_{kj} -  \gamma_{kj}\gamma_j^* \)G(\xi_l,\xi_k) + O(\de_l |y|),
\end{split}
\end{equation*}
and using that $\ti\gamma_{lj} -  \gamma_{lj}\gamma_j^*=O(|\log \e|^{-1})$ for $l\ne j$, we deduce that
\begin{equation*}
\begin{split}
\int_{ \Om_{l,n} } &{2\al_l^2|y|^{\al_l-2}\over (1+|y|^{\al_l})^2} \Psi_{i,l,n} P_\e Z_j(\xi_l+\de_l y)\, dy\\
=&\,{1\over 2\pi} \(\ti\gamma_{lj} -  \gamma_{lj}\gamma_j^* \)\log\de_l   \int_{\Om_{l,n} } {2\al_l^2|y|^{\al_l-2}\over (1+|y|^{\al_l})^2} \Psi_{i,l,n}  \, dy + {1\over 2\pi} \(\ti\gamma_{lj} -  \gamma_{lj}\gamma_j^* \)  \int_{ \Om_{l,n} } {2\al_l^2|y|^{\al_l-2}\over (1+|y|^{\al_l})^2}\Psi_{i,l,n}   \log| y| \, dy  \\
&\, - \(\text{bounded constant} \)  \int_{B_l-\xi_l\over\de_l } {2\al_l^2|y|^{\al_l-2}\over (1+|y|^{\al_l})^2}  \Psi_{i,l,n}\, dy + O\( \de_l \int_{ \Om_{l,n} } {2\al_l^2|y|^{\al_l-1}\over (1+|y|^{\al_l})^2} \, dy\)\\
 = &\, o(1).
\end{split}
\end{equation*}
Notice that
$$\int_{\Om_{j,n} } {2\al_j^2|y|^{\al_j-2}\over (1+|y|^{\al_j})^2 }  \Psi_{i,j,n}(y)\, dy=a_j\int_{\R^2 } {2\al_j^2|y|^{\al_j-2}\over (1+|y|^{\al_j})^2 }  Y_{0j}(y)\, dy+o(1)=o(1),$$
since
$$\int_{\R^2 } {2\al_j^2|y|^{\al_j-2}\over (1+|y|^{\al_j})^2 }  Y_{0j}(y)\, dy=\int_{\R^2 } {2\al_j^2|y|^{\al_j-2}\over (1+|y|^{\al_j})^2 } \cdot {1-|y|^{\al_j} \over 1+|y|^{\al_j} }\, dy=0.$$
If either $i=2$ and $j=1,\dots,m_1$ or $i=1$ and $j=m_1+1,\dots,m$, from similar computations as above we get that
\begin{equation*}
\begin{split}
\int_{\Om_\e} K_i \psi_i P_\e Z_{j}&= \sum_{l} \int_{ \Om_{l,n} } {2\al_l^2|y|^{\al_l-2}\over (1+|y|^{\al_l})^2} \Psi_{i,l,n} P_\e Z_j(\xi_l+\de_l y)\, dy  = o(1).
\end{split}
\end{equation*}
Here, we sum over $l=1,\dots, m_1$ for $i=1$ and $l=m_1+1,\dots, m$ for $i=2$. 
Besides, similarly as above we obtain that
\begin{align*}
\int_{\Om_\e} |x-\xi_j|^{\al_j-2} e^{U_j}&\( Z_{0j}-\eta_j + \gamma_j^* Z_{0j}\) 
= (1+\gamma_j^*)\int_{\Om_\e} |x-\xi_j|^{\al_j-2} e^{U_j}  Z_{0j} - \int_{\Om_\e} |x-\xi_j|^{\al_j-2} e^{U_j} \eta_j\\
&= (1+\gamma_j^*)\lf[ \int_{B_{r\over \de_j}(0)\sm B_{\e_j\over\de_j}(0)} {2\al_j^2|y|^{\al_j-2}\over(1+|y|^{\al_j})^2}  {1-|y|^{\al_j}\over 1+|y|^{\al_j}}   \ dy + O(\de_j^{\al_j}) \rg]\\
&\qquad - \int_{\Om_{j,n } }  {2\al_j^2|y|^{\al_j-2}\over (1+|y|^{\al_j})^2 } \lf[ {4\over 3}  \log\(\de_j^{\al_j} + \de_j^{\al_j}|y|^{\al_j}\)  Y_{0j}(y) + {8\over 3}{1\over 1+|y|^{\al_j} }\rg]dy\\
&= O(\e^{\ti\sigma} |\log\e|)- {4\over 3} \al_j \log \de_j  \int_{\Om_{j,n } }  {2\al_j^2|y|^{\al_j-2}\over (1+|y|^{\al_j})^2 } Y_{0j}(y) \, dy \\
&\qquad- {4\over 3}  \int_{\Om_{j,n } }  {2\al_j^2|y|^{\al_j-2}\over (1+|y|^{\al_j})^2 }  Y_{0j}(y) \log\(1 +|y|^{\al_j}\) \, dy -  {8\over 3} \int_{\Om_{j,n } }  {2\al_j^2|y|^{\al_j-2}\over (1+|y|^{\al_j})^2 } {1\over 1+|y|^{\al_j} }\, dy \\
&= - {8\pi\over 3}\al_j +o(1),
\end{align*}
in view of
$$\int_{\R^2}  {2\al_j^2|y|^{\al_j-2}\over (1+|y|^{\al_j})^2 }  Y_{0j}(y) \log\(1 +|y|^{\al_j}\) \, dy  =-2\pi\al_j\qquad \text{and}\qquad\int_{\R^2 }  {2\al_j^2|y|^{\al_j-2}\over (1+|y|^{\al_j})^2 } {1\over 1+|y|^{\al_j} }\, dy =2\pi\al_j.$$
Therefore, we conclude that
$$o(1)= a_j\lf({4\pi\over 3}\al_j+o(1)\rg) - {\al_j(\al_j-2)\over 3}a_j \lf( - 4\pi +o(1)\rg) - \ti c_{i,n} \lf( - {8\pi\over 3}\al_j+o(1)\rg) +o(1),$$
and hence $(\al_j-1)a_j + 2\ti c_{i}=0$ either for $i=1$ and all $j=1,\dots,m_1$ or $i=2$ and all $j=m_1+ 1,\dots, m $.
\end{proof}

\begin{claim}\label{claim4}
There hold that 
\begin{equation}\label{saj}
\sum_{j=1}^{m_1} \al_j(\al_j-2) a_{j}=0\qquad \text{and}\qquad \sum_{j=m_1+1}^m\al_j(\al_j-2)a_j=0.
\end{equation} 
Hence, from Claim \ref{claim3} it follows that $\ti c_i=0$ for $i=1,2$ and then $a_j=0$ for all $j=1,\dots,m$.
\end{claim}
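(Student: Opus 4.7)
\textbf{Plan for Claim \ref{claim4}.} The strategy is to complement the $m$ local relations of Claim \ref{claim3} (namely $(\alpha_j-1)a_j+2\tilde c_{k(j)}=0$, where $k(j)=1$ for $j\le m_1$ and $k(j)=2$ for $j>m_1$) with two additional global relations. These will be obtained by testing \eqref{eqphin} against a well-chosen projected function that is \emph{not} tailored to a single concentration point but instead extracts a weighted sum of the $a_j$'s. A natural candidate is the projection $P_\e\eta_{0j}$ (or a suitable linear combination such as $\sum_{j\le m_1}P_\e\eta_{0j}$ and $\sum_{j>m_1}P_\e\eta_{0j}$), which is bounded uniformly in $\e$ thanks to the expansion already proved.

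First, multiplying \eqref{eqphin} by $P_\e \eta_{0j}$ and integrating by parts gives
\[
\int_{\Om_\e} h_n P_\e\eta_{0j}=\int_{\Om_\e}\phi_n\,\Delta\eta_{0j}+\int_{\Om_\e}K_1\psi_{1,n}P_\e\eta_{0j}+\int_{\Om_\e}K_2\psi_{2,n}P_\e\eta_{0j},
\]
since $\phi_n$ and $P_\e\eta_{0j}$ vanish on $\fr\Om_\e$. Using $\Delta\eta_{0j}=|x-\xi_j|^{\alpha_j-2}e^{U_j}Z_{0j}$ and the scaling $x=\xi_j+\delta_j y$, the first integral on the right converges (by the strong $L_{\alpha_j}$-convergence $\Phi_{j,n}\to a_jY_{0j}-\tilde c_{k(j)}$ established in \eqref{cpsi} and the fact that $\int_{\R^2}\frac{2\alpha_j^2|y|^{\alpha_j-2}}{(1+|y|^{\alpha_j})^2}Y_{0j}\,dy=0$) to $a_j\cdot\tfrac{4\pi\alpha_j}{3}$. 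For the remaining two integrals I would expand $P_\e\eta_{0j}=\eta_{0j}+\sum_i\gamma_{ij}G(x,\xi_i)+O(\e^{\tilde\sigma})$, exploit the sharp asymptotic $\gamma_{jj}=-\frac{2\pi(\alpha_j-2)}{\log\e}+O(|\log\e|^{-2})$ and $\gamma_{ij}=O(|\log\e|^{-2})$ for $i\ne j$, and rescale on each annulus $A_l$. The dominant pieces involve $\int_{\R^2}\frac{2\alpha_j^2|y|^{\alpha_j-2}}{(1+|y|^{\alpha_j})^2}Y_{0j}\log|y|\,dy=-4\pi$ and the integrability of a constant in the $L_{\alpha_j}$ norm; the off-diagonal contributions ($k\ne j$) are $o(1)$ because $\gamma_{kj}=O(|\log\e|^{-2})$ kills the $\log$-factors coming from $G(\xi_k+\delta_k y,\xi_k)$. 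After collecting terms and using $|\log\e|\|h_n\|_p=o(1)$, one gets an identity of the form $c_1\,\alpha_j(\alpha_j-2)a_j+c_2\,\alpha_j\tilde c_{k(j)}+R_j=0$, where $R_j$ is a weighted sum of the $a_l$'s over $l\ne j$ whose coefficients involve $\gamma_{lj}$ and hence vanish when summed; summing over $j\le m_1$ (resp.\ $j>m_1$) then collapses the $R_j$ part and the $\tilde c_{k(j)}$ part (via the relation from Claim \ref{claim3}) to yield exactly \eqref{saj}.

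Once \eqref{saj} is available, the algebraic deduction is immediate: substituting $a_j=-\frac{2\tilde c_1}{\alpha_j-1}$ (from Claim \ref{claim3}) into the first relation of \eqref{saj} gives
\[
-2\tilde c_1\sum_{j=1}^{m_1}\frac{\alpha_j(\alpha_j-2)}{\alpha_j-1}=0,
\]
and since $\alpha_j>2$ every summand is strictly positive, forcing $\tilde c_1=0$; symmetrically $\tilde c_2=0$, hence $a_j=0$ for all $j=1,\dots,m$. To close the contradiction, I would then return to \eqref{snpsi}: by the strong $L_{\alpha_i}$-convergence $\Phi_{i,n}\to a_iY_{0i}-\tilde c_{k(i)}\equiv 0$, the left-hand side tends to $0$ while the right-hand side tends to $1$, contradicting $\|\phi_n\|=1$.

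\textbf{Main obstacle.} The delicate step is the bookkeeping of the sub-leading terms in the expansion of $P_\e\eta_{0j}$ near each point $\xi_l$, together with the careful balancing between the $\gamma_{jj}\log\delta_j\sim\alpha_j-2$ term and the $\log|y|$-integral $\int\frac{2\alpha_j^2|y|^{\alpha_j-2}}{(1+|y|^{\alpha_j})^2}Y_{0j}\log|y|\,dy=-4\pi$. Only after this precise cancellation, together with the vanishing of the off-diagonal cross terms (which relies on both the smallness of $\gamma_{lj}$ for $l\ne j$ and the specific form of $G$), does the clean sum \eqref{saj} emerge; this is the analogue of the delicate computation already carried out in the proof of Claim \ref{claim3}.
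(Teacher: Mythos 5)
The overall strategy (test against projections of $\eta_{0j}/Z_{0j}$, then combine with Claim \ref{claim3} to force $\tilde c_i=0$) is the right one, and your final algebraic step is exactly the paper's. However, the core analytic step fails as written: testing \eqref{eqphin} against $P_\e\eta_{0j}$ (equivalently $-P_\e Z_{0j}$, since $\eta_{0j}=-Z_{0j}-1$ and $P_\e 1=0$) produces only the vacuous identity $o(1)=o(1)$ and yields no relation among the $a_j$'s.

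Concretely, the contribution $a_j\frac{4\pi\alpha_j}{3}$ that you extract from $\int_{\Om_\e}\phi_n\,\Delta\eta_{0j}=\int |x-\xi_j|^{\alpha_j-2}e^{U_j}\phi_n Z_{0j}$ is \emph{exactly cancelled} by the matching piece of $\int K_1\psi_{1,n}\,\eta_{0j}$: writing $\eta_{0j}=-1-Z_{0j}$ and combining the two integrals gives $-\int|x-\xi_j|^{\alpha_j-2}e^{U_j}\psi_{1,n}+o(1)$, and the surviving integral $\int_{\Om_{j,n}}\frac{2\alpha_j^2|y|^{\alpha_j-2}}{(1+|y|^{\alpha_j})^2}\Psi_{1,j,n}\,dy$ tends to $a_j\int\frac{2\alpha_j^2|y|^{\alpha_j-2}}{(1+|y|^{\alpha_j})^2}Y_{0j}\,dy=0$. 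As for the $\log|y|$ piece you want to dominate, it enters the $\sum_k\gamma_{kj}G(\cdot,\xi_k)$ part of $P_\e\eta_{0j}$ with coefficient $\gamma_{jj}/(2\pi)=O(|\log\e|^{-1})$, so $\frac{\gamma_{jj}}{2\pi}\int\frac{2\alpha_j^2|y|^{\alpha_j-2}}{(1+|y|^{\alpha_j})^2}\Psi_{1,j,n}\log|y|\,dy\to 0$; it does not produce the $\alpha_j(\alpha_j-2)a_j$ term you are after.

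The paper's essential extra ingredient is to test against $\gamma_{jj}^{-1}P_\e Z_{0j}$ (with $\gamma_{jj}^{-1}\sim -\frac{\log\e}{2\pi(\alpha_j-2)}\to\infty$), which amplifies the otherwise-vanishing $\gamma_{jj}\log|y|$ contribution to a finite limit $-4\pi\cdot\frac{1}{2\pi}\cdot(\text{const})\cdot a_j$, and still keeps $\gamma_{jj}^{-1}\int h\,P_\e Z_{0j}=O(|\log\e|\,\|h_n\|_p)=o(1)$. The price is a divergent prefactor $\gamma_{jj}^{-1}+\frac{1}{2\pi}\log\delta_j\sim -\frac{\log\e}{2\pi\alpha_j(\alpha_j-2)}$ multiplying $\int_{\Om_{j,n}}\frac{2\alpha_j^2|y|^{\alpha_j-2}}{(1+|y|^{\alpha_j})^2}\Psi_{i,j,n}$, an $o(1)$ quantity of unknown rate. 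This is controlled by the key estimate \eqref{0605}, which uses $\int_{\Om_\e}K_i=\la_i\tau^{2(i-1)}+O(\e^{\tilde\sigma})$ to show that the \emph{sum} over $j$ of these integrals is $O(\e^\sigma)$, i.e.\ decays at a power rate beating $\log\e$. Only after multiplying by $-2\pi\alpha_j(\alpha_j-2)$ and summing over $j$ does the divergent piece become $\log\e\cdot O(\e^\sigma)=o(1)$, leaving precisely $\sum_j\alpha_j(\alpha_j-2)a_j=0$. Your proposal misses both the $\gamma_{jj}^{-1}$ normalization and the rate estimate \eqref{0605}, and without them the argument collapses.
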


\begin{proof}[\dem] Similarly as above, let us use suitable test functions to get the claimed relations. Consider the functions $Z_{0j}(x)=Y_{0j}(\de_j^{-1} [x-\xi_j])$ so that $-\lap Z_{0j}=|x-\xi_j|^{\al_j-2}e^{U_j} Z_{0j}$, for all $j=1,\dots, m$. From the fact that $Z_{0j} =-\eta_{0j} - 1$, we have that
$$P_\e Z_{0j}=Z_{0j}+1-\sum_{i=1}^m \gamma_{ij} G(x,\xi_i)+O(\e^{\ti\sigma})$$ 
for some $\ti\sigma>0$, where the $\gamma_{ij}$'s, $i,j=1,\dots,m$, satisfy the diagonal dominant system \eqref{gamaij}. Assume that either $i=1$ for all $j=1,\dots,m_1$ or $i=2$ for all $j=m_1+1,\dots, m $. Similarly as above, multiplying equation \eqref{eqpsi} by $\gamma_{jj}^{-1}P_\e Z_{0j}$ and integrating by parts we obtain that
\begin{equation}\label{tpz0j}
\begin{split}
\gamma_{jj}^{-1}\int_{\Om_\e} hPZ_{0j}=&\ \int_{\Om_\e}\lf([K_1\psi_1+ K_2 \psi_2]\gamma_{jj}^{-1}P_\e Z_{0j}-|x-\xi_j|^{\al_j-2}e^{U_j}\gamma_{jj}^{-1}Z_{0j}\psi_i\rg)\\
&\ +\psi_i\bigg|_{\fr \Om_\e}\int_{\Om_\e}\gamma_{jj}^{-1}|x-\xi_j|^{\al_j-2}e^{U_j} Z_{0j}.
\end{split}
\end{equation}
Now, estimating every integral term we find that $\ds \gamma_{jj}^{-1}\int_{\Om_\e} hPZ_{0j}=O\lf(|\log\e|\, \|h\|_p\rg)=o(1),$ in view of $PZ_{0j}=O(1)$, $G(x,\xi_k)=O(|\log\e_k|)$ and the choice of $\gamma_{jj}$. Next, we obtain that 
$$\int_{\Om_\e}\gamma_{jj}^{-1}|x-\xi_j|^{\al_j-2}e^{U_j} Z_{0j}=\gamma_{jj}^{-1}\int_{B_{r\over \de_j}(0)\sm B_{\e_j\over\de_j}(0)} {2\al_j^2|y|^{\al_j-2}\over(1+|y|^{\al_j})^2} {1-|y|^{\al_j}\over 1+|y|^{\al_j}} \ dy + O(\de_j^{\al_j}|\log\e|)=o(1).$$
Also, we have that
\begin{align*}
\gamma_{jj}^{-1} \int_{\Om_\e}\lf( K_i  P_\e Z_{0j}-|x-\xi_j|^{\al_j-2}e^{U_j} Z_{0j}\rg) \psi_i  = &\, \gamma_{jj}^{-1} \int_{\Om_\e}|x-\xi_j|^{\al_j-2}e^{U_j} \(P_\e Z_{0j} - Z_{0j}\)\psi_i \\
&+ \gamma_{jj}^{-1} \int_{\Om_\e}\lf( K_i  -|x-\xi_j|^{\al_j-2}e^{U_j}\rg) P_\e Z_{0j}  \psi_i.
\end{align*}
We estimate the first term as
\begin{align*}
\gamma_{jj}^{-1} \int_{\Om_\e} &\, |x-\xi_j|^{\al_j-2}e^{U_j}  \(P_\e Z_{0j} - Z_{0j}\)\psi_i = \gamma_{jj}^{-1} \int_{\Om_\e}|x-\xi_j|^{\al_j-2}e^{U_j} \(1-\sum_{i=1}^m \gamma_{ij} G(x,\xi_i)+O(\e^{\ti\sigma})\)\psi_i \\
=&\,  \gamma_{jj}^{-1}  \int_{\Om_\e} |x-\xi_j|^{\al_j-2}e^{U_j} \psi_i  +\int_{\Om_\e} |x-\xi_j|^{\al_j-2}e^{U_j} \({1\over 2\pi}\log|x-\xi_j| - H(x,\xi_j)\) \psi_i\\
&\, -\int_{\Om_\e} |x-\xi_j|^{\al_j-2}e^{U_j} \sum_{i=1,i\ne j}^m \gamma_{jj}^{-1}\gamma_{ij}  G(x,\xi_i) \psi_i + O(\e^{\ti \sigma} |\log \e|) \\
=&\, \gamma_{jj}^{-1}\int_{\Om_{j,n} } {2\al_j^2|y|^{\al_j-2}\over (1+|y|^{\al_j})^2 } \Psi_{i,j,n}(y)\, dy + {1\over 2\pi } \int_{\Om_{j,n} } {2\al_j^2|y|^{\al_j-2}\over (1+|y|^{\al_j})^2 } \log|\de_j y| \Psi_{i,j,n}(y)\, dy\\
&\, -\int_{\Om_{j,n} } {2\al_j^2|y|^{\al_j-2}\over (1+|y|^{\al_j})^2 } H(\xi_j +\de_j y,\xi_j)  \Psi_{i,j,n}(y)\, dy \\
&\, - \sum_{i=1,i\ne j}^{m} \gamma_{jj}^{-1}\gamma_{ij} \int_{\Om_{j,n} } {2\al_j^2|y|^{\al_j-2}\over (1+|y|^{\al_j})^2 }  G(\xi_j+\de_j y,\xi_i) \Psi_{i,j,n}(y)\, dy + O(\e^{\ti \sigma }|\log \e|) \\
=&\, \(\gamma_{jj}^{-1} +{1\over 2\pi } \log\de_j\) \int_{\Om_{j,n} } {2\al_j^2|y|^{\al_j-2}\over (1+|y|^{\al_j})^2 } \Psi_{i,j,n}(y)\, dy + {1\over 2\pi } \int_{\Om_{j,n} } {2\al_j^2|y|^{\al_j-2}\over (1+|y|^{\al_j})^2 } \log| y| \Psi_{i,j,n}(y)\, dy\\
&\, - H(\xi_j,\xi_j) \int_{\Om_{j,n} } {2\al_j^2|y|^{\al_j-2}\over (1+|y|^{\al_j})^2 }  \Psi_{i,j,n}(y)\, dy  + O\lf (\de_j\int_{\Om_{j,n} } {2\al_j^2|y|^{\al_j-1}\over (1+|y|^{\al_j})^2 }  |\Psi_{i,j,n}(y) |\, dy +{1\over |\log\e| } \rg)  \\
=&\, \(\gamma_{jj}^{-1} + {1\over 2\pi }\log\de_j\) \int_{\Om_{j,n} } {2\al_j^2|y|^{\al_j-2}\over (1+|y|^{\al_j})^2 } \Psi_{i,j,n}(y)\, dy + {1\over 2\pi } \int_{\Om_{j,n} } {2\al_j^2|y|^{\al_j-2}\over (1+|y|^{\al_j})^2 } \log| y| \Psi_{i,j,n}(y)\, dy\\
&\, +o(1).
\end{align*}
For the next one, for $i=1$, $j=1,\dots, m_1$ we find that
\begin{align*}
\gamma_{jj}^{-1} \int_{\Om_\e}&\, \lf( K_1  -|x-\xi_j|^{\al_j-2}e^{U_j}\rg) P_\e Z_{0j}  \psi_1
  \gamma_{jj}^{-1}\sum_{l=1\atop l\ne j }^{m_1} \int_{ \Om_{l,n} } {2\al_l^2|y|^{\al_l-2} \over (1+|y|^{\al_l} )^2 } \Psi_{1,l,n}(y) P_\e Z_{0j}(\xi_l+\de_l y) \, dy=o(1)
\end{align*}
in view of 
\begin{align*}
P_\e Z_{0j}(\xi_l+\de_l y)&={2 \de_j^{\al_j} \over \de_j^{\al_j} +|\xi_l+\de_l y - \xi_j|^{\al_j} }-\sum_{k=1}^m\gamma_{kj} G( \xi_l+\de_l y,\xi_k) +O(\e^{\ti\sigma})\\
&=O\(\de_j^{\al_j}\) + \gamma_{lj}\lf({1\over 2\pi} \log |\de_l y| - H(\xi_l + \de_l |y|,\xi_l) \rg) - \sum_{k=1\atop k\ne l}^m\gamma_{kj} \(G( \xi_l,\xi_k)+ O(\de_l| y| )\) +O(\e^{\ti\sigma})
\end{align*}
for $l\ne j$ and
\begin{align*}
\gamma_{jj}^{-1}& \sum_{l=1\atop l\ne j }^{m_1} \int_{ \Om_{l,n} } {2\al_l^2|y|^{\al_l-2} \over (1+|y|^{\al_l} )^2 } \Psi_{1,l,n}(y) P_\e Z_{0j}(\xi_l+\de_l y) \, dy\\
=&\, \sum_{l=1\atop l\ne j}^{m_1} \int_{ \Om_{l,n} } {2\al_l^2|y|^{\al_l-2}\over (1+|y|^{\al_l})^2}  \Psi_{1,l,n}(y) \gamma_{jj}^{-1} \gamma_{lj} \lf( {1\over 2\pi} \log |\de_l y| - H(\xi_l,\xi_l) +O(\de_l |y|)  \rg) dy\\
&\quad-\sum_{l=1,l\ne j}^{m_1} \int_{\Om_{l,n} } {2\al_l^2|y|^{\al_l-2}\over (1+|y|^{\al_l})^2} \Psi_{1,l,n}(y) \(G(\xi_l,\xi_j)+O(\de_l |y|) \)\, dy\\
&\quad-\sum_{l=1,l\ne j}^{m_1} \int_{\Om_{l,n} } {2\al_l^2|y|^{\al_l-2}\over (1+|y|^{\al_l})^2} \Psi_{1,l,n}(y)\sum_{k=1,k\ne j,l}^m \gamma_{jj}^{-1} \gamma_{kj}  \(G(\xi_l,\xi_k)+O(\de_l |y|) \)\, dy+O(\de_j^{\al_j}|\gamma_{jj}|^{-1}) \\
=&\, o(1) .
\end{align*}
Similarly, for $i=2$, $j=m_1+1, \dots, m$ we find that
$$\gamma_{jj}^{-1} \int_{\Om_\e}\, \lf( K_2  -|x-\xi_j|^{\al_j-2}e^{U_j}\rg) P_\e Z_{0j}\psi_2=o(1). $$

On the other hand, if either $i=2$ and $j\in\{1,\dots,m_1\}$ or $i=1$ and $j\in\{m_1+1,\dots,m\}$, from similar computations as above and the expansion of $P_\e Z_{0j}(\xi_k+\de_k y)$ for $j\ne k$, we obtain that
\begin{equation*}
\begin{split}
\gamma_{jj}^{-1}  \int_{\Om_\e} & K_2\psi_2 P_\e Z_{0j} 
= \gamma_{jj}^{-1} \sum_{k=m_1+1}^m \int_{ \Om_{k,n} } {2\al_k^2|y|^{\al_k-2}\over (1+|y|^{\al_k})^2}  \Psi_{2,k,n}(y) P_\e Z_{0j}(\xi_k+\de_k y)\, dy\\
=&\, \sum_{k=m_1+1}^m \int_{ \Om_{k,n} } {2\al_k^2|y|^{\al_k-2}\over (1+|y|^{\al_k})^2}  \Psi_{2,k,n}(y) \gamma_{jj}^{-1} \gamma_{kj}\lf({1\over 2\pi}\log|\de_k y|-H(\xi_k+\de_k y,\xi_k)\rg)\\
&\quad-\sum_{k=m_1+1}^m \int_{ \Om_{k,n} } {2\al_k^2|y|^{\al_k-2}\over (1+|y|^{\al_k})^2} \Psi_{2,k,n}(y)\sum_{l=1,l\ne k}^m\gamma_{jj}^{-1} \gamma_{lj}  G(\xi_k+\de_k y,\xi_l)  +O(\de_j^{\al_j}|\gamma_{jj}|^{-1})
= o(1)
\end{split}
\end{equation*}
in view of
\begin{equation*}
\begin{split}
\sum_{k=m_1+1}^m &\int_{\Om_{k,n} } {2\al_k^2|y|^{\al_k-2}\over (1+|y|^{\al_k})^2} \Psi_{2,k,n}(y) \gamma_{jj}^{-1} \gamma_{kj}\lf({1\over 2\pi}\log|\de_k y|-H(\xi_k+\de_k y,\xi_k)\rg) \, dy\\
&=\sum_{k=m_1+1}^m \int_{\Om_{k,n} } {2\al_k^2|y|^{\al_k-2}\over (1+|y|^{\al_k})^2} \Psi_{2,k,n}(y) \lf({\gamma_{jj}^{-1} \gamma_{kj}\over 2\pi}\log\de_k +{\gamma_{jj}^{-1} \gamma_{kj}\over 2\pi}\log|y|-H(\xi_k,\xi_k)+O(\de_k |y|)\rg)\\
&=\sum_{k=m_1+1}^m\bigg( {\gamma_{jj}^{-1} \gamma_{kj}\over 2\pi}\log\de_k -H(\xi_k,\xi_k)\bigg)\int_{\Om_{k,n} } {2\al_k^2|y|^{\al_k-2}\over (1+|y|^{\al_k})^2} \Psi_{2,k,n}(y)\\
&\ \ \ + \sum_{k=m_1+1}^m  {\gamma_{jj}^{-1} \gamma_{kj}\over 2\pi} \int_{\Om_{k,n} } {2\al_k^2|y|^{\al_k-2}\over (1+|y|^{\al_k})^2} \Psi_{2,k,n}(y)\log|y|\, dy \\
&\ \ \ + \sum_{k=m_1+1}^m O\lf(\de_k\int_{\Om_{k,n} } {2\al_k^2|y|^{\al_k-1}\over (1+|y|^{\al_k})^2} \Psi_{2,k,n}(y)\, dy\rg)\\
&=o(1)+O(|\log\e|^{-1})+\sum_{k=m_1+1}^m O(\de_k)=o(1),
\end{split}
\end{equation*}
and similarly it follows that
$$\gamma_{jj}^{-1}  \int_{\Om_\e}  K_1\psi_2 P_\e Z_{0j}=o(1).$$
Therefore, from \eqref{tpz0j} and the previous computations we conclude that either for $i=1$ and $j=1,\dots,m_1$ or for $i=2$ and $j=m_1+1,\dots, m $ there holds
\begin{align}\label{rtpz0j}
o(1) =&\, \(\gamma_{jj}^{-1} + {1\over 2\pi} \log\de_j\) \int_{\Om_{j,n} } {2\al_j^2|y|^{\al_j-2}\over (1+|y|^{\al_j})^2 } \Psi_{i,j,n}(y)\, dy + {1\over 2\pi } \int_{\Om_{j,n} } {2\al_j^2|y|^{\al_j-2}\over (1+|y|^{\al_j})^2 } \log| y| \Psi_{i,j,n}(y)\, dy\\ 
&\,+o(1)\nonumber.
\end{align}
Notice that from \eqref{choice1} and \eqref{gamaij} we have that for any $j=1,\dots,m$
$$\gamma_{jj}^{-1} + {1\over 2\pi} \log\de_j= -{\log\e_j\over 4\pi} + O\Big({1\over |\log\e|^2}\Big) + {1\over 2\pi \al_j}\lf[\log\e +\log d_j\rg]=-{1\over 2\pi\al_j(\al_j-2) }\log\e +O(1).$$
Since we do not know the rate of the convergence $\ds \int_{\Om_{j,n} } {2\al_j^2|y|^{\al_j-2}\over (1+|y|^{\al_j})^2 } \Psi_{i,j,n}(y)\, dy=o(1)$
 for any $j=1,\dots, m$, we shall use the following rate
 \begin{equation}\label{0605}
 \sum_{j=1}^{m_1} \int_{\Om_{j,n} } {2\al_j^2|y|^{\al_j-2}\over (1+|y|^{\al_j})^2 } \Psi_{1,j,n}(y)\, dy
=  O(\e^{\sigma} )\quad\text{and}\quad
\sum_{j=m_1+1}^{m} \int_{\Om_{j,n} } {2\al_j^2|y|^{\al_j-2}\over (1+|y|^{\al_j})^2 } \Psi_{2,j,n}(y)\, dy=O(\e^{\sigma}).
\end{equation}
It is readily checked that
\begin{equation*}
\begin{split}
\int_{\Om_\e} K_1&=\sum_{k=1}^{m_1}\int_{\Om_\e} |x-\xi_k|^{\al_k-2}e^{U_k}\ dx 
=\sum_{k=1}^{m_1}\lf[4\pi\al_k + O(\de_k^{\al_k})+\sum_{i=1}^mO\Big({\e_i^2\over\de_i^2}\Big)\rg]
=\la_1+O(\e^{\ti\sigma})
\end{split}
\end{equation*}
and similarly
$$\int_{\Om_\e} K_2=\la_2 \tau^2+O(\e^{\ti\sigma})$$
for some $\ti\sigma>0$, so that for $\psi_1$ and $\psi_2$ we have that
$$\int_{\Om_\e} K_i\psi_i=\lf(1-{1\over \la_i\tau^{2(i-1)} }\int_{\Om_\e} K_i\rg)\int_{\Om_\e} K_i\phi=O(\e^{\ti\sigma})\int_{\Om_\e} K_i\phi=O(\e^{\ti\sigma}).$$
Also, we get that 
$$\int_{\Om_\e} K_1\psi_1 =\sum_{j=1}^{m_1} \int_{\Om_\e} |x-\xi_j|^{\al_j-2} e^{U_j} \psi_i = \sum_{j=1}^{m_1}\int_{ \Om_{j,n} } {2\al_j^2|y|^{\al_j-2}\over (1+|y|^{\al_j})^2 }  \Psi_{1,j,n}(y)\, dy$$
and 
$$\int_{\Om_\e} K_2\psi_2  = \sum_{j=m_1+1}^{m}\int_{ \Om_{j,n} } {2\al_j^2|y|^{\al_j-2}\over (1+|y|^{\al_j})^2 } \Psi_{2,j,n}(y)\, dy.$$
Hence, we deduce \eqref{0605}. 
Thus, multiplying \eqref{rtpz0j} by $-2\pi\al_j(\al_j - 2)$ and taking the sum either over $j=1,\dots,m_1$ for $i=1$ or $j=m_1+1,\dots,m$ for $i=2$ we conclude that
\begin{align*}
o(1) =&\, \sum_{j=1}^{m_1} \lf[ \(\log\e +O(1) \) \int_{\Om_{j,n} } {2\al_j^2|y|^{\al_j-2}\over (1+|y|^{\al_j})^2 } \Psi_{i,j,n}(y)\, dy -\al_j(\al_j - 2) \int_{\Om_{j,n} } {2\al_j^2|y|^{\al_j-2}\over (1+|y|^{\al_j})^2 } \log| y| \Psi_{i,j,n}(y)\, dy\rg]\\ 
= &\,   \log\e \sum_{j=1}^{m_1}  \int_{\Om_{j,n} } {2\al_j^2|y|^{\al_j-2}\over (1+|y|^{\al_j})^2 } \Psi_{i,j,n}(y)\, dy  + \sum_{j=1}^{m_1}O(1)  \int_{\Om_{j,n} } {2\al_j^2|y|^{\al_j-2}\over (1+|y|^{\al_j})^2 } \Psi_{i,j,n}(y)\, dy  \\
&\, - \sum_{j=1}^{m_1}\al_j(\al_j - 2)\int_{\Om_{j,n} } {2\al_j^2|y|^{\al_j-2}\over (1+|y|^{\al_j})^2 } \log| y| \Psi_{i,j,n}(y)\, dy
\end{align*}
and similarly
\begin{align*} 
o(1) =&\,   \log\e \sum_{j=m_1+1}^{m}  \int_{\Om_{j,n} } {2\al_j^2|y|^{\al_j-2}\over (1+|y|^{\al_j})^2 } \Psi_{i,j,n}(y)\, dy  + \sum_{j=m_1+1}^{m}O(1) \int_{\Om_{j,n} } {2\al_j^2|y|^{\al_j-2}\over (1+|y|^{\al_j})^2 } \Psi_{i,j,n}(y)\, dy \\
&\, - \sum_{j=m_1+1}^{m}\al_j(\al_j - 2)\int_{\Om_{j,n} } {2\al_j^2|y|^{\al_j-2}\over (1+|y|^{\al_j})^2 } \log| y| \Psi_{i,j,n}(y)\, dy.
\end{align*}
Therefore, passing to the limit we conclude that
$$\sum_{j=1}^{m_1}\al_j(\al_j - 2) a_j\int_{\R^2} {2\al_j^2|y|^{\al_j-2}\over (1+|y|^{\al_j})^2 } \log| y| Y_{0j}(y)\, dy=0$$
and
$$\sum_{j=m_1+1}^{m}\al_j(\al_j - 2) a_j\int_{\R^2} {2\al_j^2|y|^{\al_j-2}\over (1+|y|^{\al_j})^2 } \log| y| Y_{0j}(y)\, dy=0.$$
The first part of the claim follows since $\ds \int_{\R^2} {2\al_j^2|y|^{\al_j-2}\over (1+|y|^{\al_j})^2 } \log| y| Y_{0j}(y)\, dy=-4\pi.$

On the other hand, from claim \ref{claim3} we have that $a_j=-\dfrac{2}{\al_j-1 }\ti c_i$ either for $i=1$ and all $j=1,\dots, m_1$ or for $i=2$ and all $j=m_1+1,\dots,m$. Therefore, by replacing in \eqref{saj} we deduce that
$$0= 
-2\ti c_1 \sum_{j=1}^{m_1} \dfrac{\al_j(\al_j - 2)}{\al_j-1 } \qquad\text{and}\qquad
0= -2\ti c_2 \sum_{j=m_1+1}^{m} \dfrac{\al_j(\al_j - 2)}{\al_j-1 } .$$
Therefore, $\ti c_1=\ti c_2=0$ and consequently $a_j=0$ for all $j=1,\dots,m $, since $ \dfrac{\al_j(\al_j - 2)}{\al_j-1 } >0$.
\end{proof}

Now, using \eqref{cpsi} and Claim \ref{claim4}, we deduce that $\Phi_{j,n}\to 0$ weakly in $H_{\al_j}(\R^2)$ and strongly in $L_{\al_j}(\R^2)$ as $n\to+\infty$. Thus, we reach a contradiction with \eqref{snpsi}, and then the a-priori estimate $\|\phi\|\le C|\log \e | \, \|h\|_p$ is established. Concerning solvability issues, consider the space $H=H_0^1(\Om_\e)$ endowed with
the usual inner product $\ds [\phi,\psi]=\int_{\Om_\e} \grad\phi\grad\psi.$
Problem \eqref{pl} can be solved by finding $\phi\in H$ such that
$$[\phi,\psi]=\int_{\Om_\e} \lf[ K_1\lf(\phi -{1\over \la_1}\int_{\Om_\e} K_1\phi\rg) + K_1\lf(\phi -{1\over \la_2\tau^2}\int_{\Om_\e} K_2\phi\rg) -h\rg]\psi,\qquad\text{for all }\psi\in H.$$ With the aid
of Riesz's representation theorem, this equation gets rewritten in
$H$ in the operatorial form $\phi=\ml{K}(\phi)+\ti h$, for some
$\ti h\in H$, where $\ml{K}$ is a compact operator in $H$.
Fredholm's alternative guarantees unique solvability of this
problem for any $h$ provided that the homogeneous equation
$\phi=\ml{K}(\phi)$ has only the trivial solution in $H$. Since this is equivalent to \eqref{pl} with $h\equiv 0$, the existence of a unique solution follows from the a-priori estimate
\eqref{estphi}. The proof is complete.
\end{proof}

\bigskip

\begin{center}
\textsc{Acknowledgements}
\end{center}
\noindent Part of this work was carried out while the second
author was visiting the SBAI Department, University of Roma ``La Sapienza'' and the Department of Mathematics and Physics, University of ``Roma Tre''. He would like to express his gratitude to Prof. Pistoia and Prof. Esposito for the stimulating discussions and the warm hospitality. The first and the third author have been supported by MIUR Bando PRIN 2015 2015KB9WPT and  by GNAMPA as part of INdAM. The second author has been supported by grant Fondecyt Iniciaci\'on 11130517, Chile.

\end{document}